\DeclareSymbolFont{cyrletters}{OT2}{wncyr}{m}{n}
\DeclareMathSymbol{\Sha}{\mathalpha}{cyrletters}{"58}
\numberwithin{equation}{section}
\newtheorem{theorem}{Theorem}[section]
\newtheorem{proposition}[theorem]{Proposition}
\newtheorem{lemma}[theorem]{Lemma}
\newtheorem{corollary}[theorem]{Corollary}
\newtheorem{conjecture}[theorem]{Conjecture}
\newtheorem{theoremintro}{Theorem}
\newtheorem{conjectureintro}{Conjecture}
\theoremstyle{definition}
\newtheorem{definition}[theorem]{Definition}
\newtheorem{question}[theorem]{Question}
\theoremstyle{remark}
\newtheorem{remark}[theorem]{Remark}
\newtheorem*{remark*}{Remark}
\newcommand{\PIC}{\mathscr{P}\!\mathit{ic}}
\newcommand{\supp}{\operatorname{Supp}}
\newcommand{\Db}{{\cat{D}^{\mathrm{b}}}}
\newcommand{\Br}{{\rm Br}}
\newcommand{\Pic}{{\rm Pic}}
\newcommand{\rk}{{\rm rk\,}}
\newcommand{\Hom}{{\rm Hom}}
\newcommand{\Spec}{{\rm Spec}\,}
\renewcommand{\dim}{{\rm dim}}
\renewcommand{\supp}{{\rm Supp}}
\newcommand{\Cl}{\mathrm{Cl}}
\newcommand{\bslash}{\smallsetminus}
\newcommand{\linedef}[1]{\emph{#1}}
\newcommand{\sheaf}[1]{\mathscr{#1}}
\newcommand{\CliffAlg}{\sheaf{C}}
\newcommand{\CliffB}{\sheaf{B}}
\newcommand{\pullback}{{}^*}
\newcommand{\pushforward}{{}_*}
\newcommand{\category}[1]{\mathsf{#1}}
\newcommand{\Coh}{\category{Coh}}
\newcommand{\COH}{\category{COH}}
\newcommand{\maxideal}{\mathfrak{m}}
\newcommand{\mult}{{}^{\times}}
\newcommand{\disc}{\text{disc}}
\newcommand{\stack}[1]{\mathcal{#1}}
\newcommand{\gerbe}[1]{\mathcal{#1}}
\newcommand{\tr}{\mathrm{tr}}
\newcommand{\coker}{{\rm coker}}
\newcommand{\id}{{\rm id}}
\newcommand{\dual}{^{\vee}}
\newcommand{\cat}[1]{\mathsf{#1}}
\newcommand{\ra}{\rightarrow}
\newcommand{\OO}{\mathscr{O}}
\newcommand{\PProj}{\mathbf{Proj}\,}
\newcommand{\Proj}{\mathrm{Proj}\,}
\newcommand{\mapto}[1]{\xrightarrow{#1}}
\newcommand{\tensor}{\otimes}
\newcommand{\isom}{\cong}
\newcommand{\wt}[1]{\widetilde{#1}}
\DeclareMathOperator{\EEnd}{\mathscr{E}\!\mathit{nd}}
\DeclareMathOperator{\HHom}{\mathscr{H}\!\mathit{om}}
\DeclareMathOperator{\QQuad}{\mathscr{Q}\!\mathit{uad}}
\DeclareMathOperator{\BBil}{\mathscr{B}\!\mathit{il}}
\newcommand{\op}{^{\mathrm{op}}}
\newcommand{\LG}{\Lambda\!\mathrm{G}}
\newcommand{\ol}[1]{\overline{#1}}
\newcommand{\inv}{^{-1}}
\newcommand{\exterior}{{\textstyle \bigwedge}}
\newcommand{\Group}[1]{\mathbf{#1}}
\newcommand{\Gm}{\Group{G}_{\mathrm{m}}}
\newcommand{\CliffZ}{\kz}
\newcommand{\SSpec}{\mathbf{Spec}\,}
\newcommand{\cali}{\mathcal}
\newcommand{\cal}{\mathscr}
\newcommand{\ka}{{\cal A}}
\newcommand{\kc}{{\cal C}}
\newcommand{\cE}{{\cali E}}
\newcommand{\kf}{{\cal F}}
\newcommand{\ki}{{\cal I}}
\newcommand{\ko}{{\cal O}}
\newcommand{\kp}{{\cal P}}
\newcommand{\cQ}{{\cali Q}}
\newcommand{\kv}{{\cal V}}
\newcommand{\cX}{{\cali X}}
\newcommand{\cW}{{\cali W}}
\newcommand{\kz}{{\cal Z}}
\newcommand{\ZZ}{\mathbb{Z}}
\newcommand{\Z}{\mathbb{Z}}
\newcommand{\CC}{\mathbb{C}}
\newcommand{\FF}{\mathbb{F}}
\newcommand{\PP}{\mathbb{P}}
\newcommand{\quadform}[1]{<\! #1 \!>}
\newcommand{\sheafsharp}{^{\scriptscriptstyle\#}}
\newcommand{\CliffKuz}{\mathfrak{B}}
\newcommand{\CoordKuz}{\mathfrak{A}}
\def\vp{\varphi}
\begin{document}

\title[Fibrations in complete intersections of quadrics]{Fibrations in
complete intersections of quadrics, \\ Clifford algebras, derived
categories, \\ and rationality problems}

\author{Asher Auel}
\address{Department of Mathematics\\%
Yale University\\%
10 Hillhouse Avenue\\%
New Haven, CT 06511\\%
USA}
\email{asher.auel@yale.edu}

\author{Marcello Bernardara}
\address{Institut de Math\'ematiques de Toulouse \\ %
Universit\'e Paul Sabatier \\ %
118 route de Narbonne \\ %
31062 Toulouse Cedex 9\\ %
France}
\email{marcello.bernardara@math.univ-toulouse.fr}

\author{Michele Bolognesi}
\address{Institut de Recherche Math\'ematique de Rennes \\ %
Universit\'e de Rennes 1 \\ %
263 Avenue du G\'en\'eral Leclerc, CS 74205 \\ %
35042 Rennes Cedex \\ %
France}
\email{michele.bolognesi@univ-rennes1.fr}


\subjclass[2010]{14F05, 14E08, 11E08, 11E20, 11E88, 14F22, 14J26, 14M17, 15A66}

\keywords{Quadric; intersection of quadrics; derived category;
semiorthogonal decomposition; Clifford algebra; Morita theory; Brauer
group; rationality; Del Pezzo surface; Fano threefold}

\begin{abstract}
Let $X \to Y$ be a fibration whose fibers are complete intersections
of $r$ quadrics.  We develop new categorical and algebraic tools---a
theory of relative homological projective duality and the Morita
invariance of the even Clifford algebra under quadric reduction by
hyperbolic splitting---to study semiorthogonal decompositions of the
bounded derived category $\Db(X)$.  Together with results in the
theory of quadratic forms, we apply these tools in the case
where $r=2$ and $X \to Y$ has relative dimension 1, 2, or 3, in which
case the fibers are curves of genus one, Del Pezzo surfaces of degree
4, or Fano threefolds, respectively.  In the latter two cases, if $Y =
\PP^1$ over an algebraically closed field of characteristic zero, we
relate rationality questions to categorical representability of $X$.
\end{abstract}

\maketitle

\vspace{-0.7cm}

\setcounter{tocdepth}{1}
\tableofcontents

\setcounter{section}{-1}

\vspace{-1cm}

\setcounter{section}{0}
\section*{Introduction}

One of the numerous applications of the study of triangulated
categories in algebraic geometry is understanding how to extract, from
the bounded derived category of coherent sheaves $\Db(X)$, information
about the birational geometry of a given smooth projective variety
$X$.

Since the seminal work of Bondal--Orlov
\cite{bondal_orlov:semiorthogonal}, it has become understood that such
information should be encoded in semiorthogonal decompositions
$$
\Db(X) = \langle \cat{A}_1, \dotsc, \cat{A}_n \rangle
$$
by admissible triangulated subcategories:\ purely homological
properties of the components of such a decomposition often reflect
geometric properties of $X$.  For example, if for each $i > 1$, the
component $\cat{A}_i$ is ``zero dimensional'' (i.e., equivalent to the
bounded derived category of the base field), then $\cat{A}_1$ should
contain nontrivial information about the birational geometry of $X$.
When $X$ is a Fano threefold, many examples support this idea
\cite{berna_macri_mehro_stella},
\cite{bolognesi_bernardara:conic_bundles},
\cite{bondal_orlov:semiorthogonal}, \cite{kuznetsov:v14},
\cite{kuznetsov:v12}.

In particular, in the case that $X \to S$ is a conic bundle over a
rational complex surface, a semiorthogonal decomposition by derived
categories of points and smooth projective curves allows one to
reconstruct the intermediate jacobian $J(X)$ as the sum of the
jacobians of the curves. This can determine the rationality of $X$
when $S$ is minimal \cite{bolognesi_bernardara:conic_bundles}.  More
generally, this works if $X$ is a complex threefold with negative
Kodaira dimension (e.g., a Fano threefold) whose codimension 2 cycles
are universally described by a principally polarized abelian variety
\cite[\S3.2]{bolognesi_bernardara:representability}.  In such
cases, homological properties of semiorthogonal decompositions are
related to classical notions of representability of cycles on $X$.

Attempting to trace the link between derived categories and algebraic
cycles, the second and third named authors defined in
\cite{bolognesi_bernardara:representability} the notion of
\emph{categorical representability} in a given dimension $m$ (or
codimension $\dim(X)-m$) of a smooth projective variety $X$, by
requiring the existence of a semiorthogonal decomposition whose
components can be fully and faithfully embedded in derived categories
of smooth projective varieties of dimension at most $m$.
Categorical representability in dimension one is equivalent to the
existence of a semiorthogonal decomposition by copies of the derived category of a point and
derived categories of smooth projective curves.  One might wonder if
categorical representability in codimension 2 is a necessary condition
for rationality.  The work of Kuznetsov on cubic fourfolds
\cite{kuznetsov:cubic_fourfolds}, developed before the definition of
categorical representability, shows how this philosophy persists as a
tool to conjecturally understand rationality problems in dimension
larger than three, where one cannot appeal to more classical methods,
such as the study of the intermediate jacobian.

In this paper, we provide two new instances where categorical
representability is strictly related to birational properties.  These
arise as fibrations $X \to \PP^1$ whose fibers are complete
intersections of two quadrics.  We impose a genericity hypothesis on
such fibrations (see Definition~\ref{def:generic-fibration}) so that
the associated pencil of quadrics has simple degeneration along a
smooth divisor.

\smallskip

In \S\ref{section:delpezzo}, we consider fibrations $X \to \PP^1$
whose fibers are Del Pezzo surfaces of degree four.  Such threefolds
have negative Kodaira dimension and their rationality (over the
complex numbers) is completely classified \cite{alekseev:dP4},
\cite{shramov:dp4}.  We provide a purely categorical criterion for
rationality of $X$ based on \cite{bolognesi_bernardara:conic_bundles}.

\begin{theoremintro}[\S\ref{section:delpezzo}]
\label{thm:1}
Let $X \to \PP^1$ be a generic Del Pezzo fibration of degree
four over the complex numbers.  Then $X$ is rational if and only if it
is categorically representable in codimension 2.  Moreover, there is a
semiorthogonal decomposition
$$
\Db(X)=\langle \Db(\Gamma_1), \dotsc, \Db(\Gamma_k), E_1, \dotsc , E_l
\rangle,
$$
with $\Gamma_i$ smooth projective curves and $E_i$ exceptional objects
if and only if $J(X) = \oplus J(\Gamma_i)$ as principally polarized
abelian varieties.
\end{theoremintro}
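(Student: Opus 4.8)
The plan is to deduce the statement from the rationality criterion for standard conic bundles over minimal rational surfaces of \cite{bolognesi_bernardara:conic_bundles}, by means of the even Clifford algebra of the associated pencil of quadrics, the relative homological projective duality, the Morita invariance under hyperbolic splitting, and the results on simple degeneration developed in this paper. Since $X$ has negative Kodaira dimension, its intermediate Jacobian $J(X)$ is a principally polarized abelian variety, and two facts about birational transformations of smooth projective threefolds of negative Kodaira dimension will be used repeatedly: (i) the property ``$J(X)$ is a direct sum of Jacobians of smooth projective curves as a principally polarized abelian variety'' is a birational invariant, because blowing up a curve $C$ adds the summand $J(C)$ and principally polarized abelian varieties decompose uniquely into indecomposables; (ii) the property ``$\Db(X)$ admits a semiorthogonal decomposition by exceptional objects and by derived categories of smooth projective curves'' is a birational invariant, because blowing up $C$ adds only the subcategories $\Db(C)$, up to twist, together with exceptional objects. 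Property (ii) is exactly categorical representability of $X$ in codimension $2 = \dim X - 1$, since an admissible subcategory of $\Db(\Gamma)$, for $\Gamma$ a smooth curve of positive genus, is zero or all of $\Db(\Gamma)$.

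\smallskip

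\emph{The associated quadric bundle and a semiorthogonal decomposition.} Write $X \subset \PP(\ke) \to \PP^1$ as the base locus of a pencil of relative quadrics, with $\ke$ a rank-five bundle on $\PP^1$ and the pencil corresponding to a rank-two bundle $\kf$ on $\PP^1$; set $S = \PP(\kf)$, a Hirzebruch surface, and form the universal quadric $q \colon \kq \to S$, whose fibers are the three-dimensional quadrics of the pencil. Then $q$ is a quadric bundle of relative dimension three with simple degeneration along a smooth divisor $D \subset S$ (Definition~\ref{def:generic-fibration}), and the projection $\kq \to \PP(\ke)$ is the blow-up of $\PP(\ke)$ along $X$, which has codimension two. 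Computing $\Db(\kq)$ by the blow-up and projective bundle formulas gives $\Db(\kq) = \langle \Db(X), E_1, \dotsc, E_{10} \rangle$ with the $E_i$ exceptional; Kuznetsov's decomposition for the quadric bundle $q$ gives $\Db(\kq) = \langle \Db(S, \kc_0), \Db(S), \Db(S)(1), \Db(S)(2) \rangle$, where $\kc_0$ is the even Clifford algebra of $q$ and the last three components carry twelve exceptional objects. Comparing the two and organizing the required mutations through the relative homological projective duality of the paper, we obtain
$$
\Db(X) = \langle \Db(S, \kc_0),\, E_1, E_2 \rangle .
$$

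\smallskip

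\emph{Reduction to a conic bundle.} As $\kq$ is rational (it is birational to $\PP(\ke)$), the quadric bundle $q \colon \kq \to S$ admits a rational isotropic sub-line-bundle, so by the Morita invariance of the even Clifford algebra under reduction by hyperbolic splitting, $\kc_0$ is Morita equivalent to the even Clifford algebra $\kc_0'$ of a conic bundle $\kq_1 \to S$ with the same simple degeneration along $D$; hence $\Db(S, \kc_0) \simeq \Db(S, \kc_0')$, a component of $\Db(\kq_1) = \langle \Db(S, \kc_0'), \Db(S) \rangle$. Using the results on simple degeneration to describe $\kc_0'$ via the discriminant double cover $\wt D \to D$, together with the birational classification of Del Pezzo fibrations of degree four in \cite{alekseev:dP4}, \cite{shramov:dp4}, one shows that $X$ is birational to a standard conic bundle $Y \to S'$ over a minimal rational surface $S'$ whose derived category contains $\Db(S, \kc_0)$ as a component, up to exceptional objects and derived categories of smooth projective curves --- the curves of positive genus that can occur being the components of $\wt D$. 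I expect this passage --- from the abstract sheaf of algebras $\kc_0$ to an actual standard conic bundle over a minimal surface, with matching derived categories and intermediate Jacobians --- to be the main obstacle, and to be exactly where the new results on degenerate quadratic forms and the classification \cite{alekseev:dP4}, \cite{shramov:dp4} are needed.

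\smallskip

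\emph{Conclusion.} By \cite{bolognesi_bernardara:conic_bundles} applied to the standard conic bundle $Y \to S'$ over the minimal rational surface $S'$, the category $\Db(Y)$ admits a semiorthogonal decomposition by exceptional objects and derived categories of smooth projective curves $\Gamma_i$ if and only if $J(Y) \cong \bigoplus_i J(\Gamma_i)$ as principally polarized abelian varieties. By the birational invariances (i) and (ii), the same equivalence holds with $Y$ replaced by $X$; combined with the identification of (ii) with categorical representability in codimension two, this proves the ``moreover'' statement and shows that $X$ is rational if and only if $J(X)$ is a direct sum of Jacobians of curves. Finally, the Clemens--Griffiths criterion gives that rationality of $X$ forces such a decomposition of $J(X)$, while the classification \cite{alekseev:dP4}, \cite{shramov:dp4} provides the converse for Del Pezzo fibrations of degree four; therefore $X$ is rational if and only if it is categorically representable in codimension two.
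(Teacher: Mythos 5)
Your overall route is the paper's: pass to the pencil $Q \to S$ over the Hirzebruch surface $S$, identify $\cat{A}_X \simeq \Db(S,\kc_0)$ (the paper gets $\Db(X) = \langle \Db(S,\kc_0), E_1, E_2\rangle$ directly from relative HP duality, Theorem~\ref{hpd}\eqref{hpd_Fano}, rather than by your blow-up comparison, though your numerology is consistent), perform quadric reduction to a conic bundle, and feed the result into \cite{bolognesi_bernardara:conic_bundles}. However, two steps need attention. First, your justification for the existence of a rational isotropic line subbundle is wrong: rationality of the total space $\kq$ does not imply that the generic fiber of $\kq \to S$ has a $k(S)$-point (standard conic bundles over $\PP^2$ with nontrivial Brauer class but rational total space are counterexamples to that implication). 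The correct argument is Lemma~\ref{lem:C_2}: $k(S)$ is a $C_2$-field and the form has rank $5$, so $Q \to S$ has a rational section; regularity of the section then comes from the smoothness of the discriminant (Remark~\ref{regsect_remark}), or, as the paper actually proceeds, from Alexeev's construction of a good section of $X \to \PP^1$ transferred to $Q \to S$ by Amer--Brumer.

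Second, the step you flag as ``the main obstacle'' is indeed the crux, and it is not filled in. The paper's resolution is more direct than what you sketch: Alexeev \cite{alekseev:dP4} produces a \emph{standard} conic bundle $Q'' \to S$ over the \emph{same} Hirzebruch surface $S$ (no passage to a minimal model $S'$ is needed, since Theorem~\ref{marmic} applies to Hirzebruch surfaces), birational to $X$ and agreeing with the quadric reduction $Q'$ away from finitely many rulings of $S$. Since two conic bundles that coincide on a dense open subset of $S$ have even Clifford algebras defining the same class in $\Br(k(S))$, one gets an honest equivalence $\cat{A}_X \simeq \Db(S,\kc_0'')$ (Corollary~\ref{equicat2}), and Theorem~\ref{marmic} then transfers verbatim to $\Db(X) = \langle \cat{A}_X, E_1, E_2\rangle$. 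This avoids your appeal to birational invariance of the semiorthogonal-decomposition property, of which only one direction (adding components under blow-up) is immediate; the converse direction is exactly what has to be routed through the intermediate Jacobian, as in \cite{bolognesi_bernardara:conic_bundles}. Finally, rationality of $X$ is equivalent to that of $Q'$ (hence $Q''$) by \cite[Thm.~3.2]{colliot-thelene_sansuc_swinnerton-dyer:quadrics_I} and Lemma~\ref{lem:rationality}, which is how the paper closes the rationality equivalence without invoking Clemens--Griffiths separately.
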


In \S\ref{section:4fold}, we consider fibrations $X \to \PP^1$ whose
fibers are complete intersections of two four-dimensional
quadrics. Such fourfolds have a semiorthogonal decomposition
$$
\Db(X) = \langle \cat{A}_X, E_1, \dotsc, E_4 \rangle,
$$
where $E_i$ are exceptional objects.  Moreover, we construct a
fibration $T \to \PP^1$ in hyperelliptic curves and a Brauer class
$\beta \in \Br(T)$ such that $\cat{A}_X \simeq \Db(T,\beta)$.  We
state a conjecture in the same spirit as Kuznetsov's conjecture for
cubic fourfolds \cite[Conj.~1.1]{kuznetsov:cubic_fourfolds}.

\begin{conjectureintro}
\label{conj:1}
Let $X \to \PP^1$ be a fibration whose fibers are
intersections of two four-dimensional quadrics over the complex
numbers.  Then $X$ is rational if and only if it is categorically
representable in codimension 2.
\end{conjectureintro}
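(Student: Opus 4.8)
\emph{Strategy.} The plan is to mimic the philosophy behind Kuznetsov's cubic fourfold conjecture \cite{kuznetsov:cubic_fourfolds}: isolate the nontrivial part of $\Db(X)$ and tie its geometric realizability to rationality. Granting the semiorthogonal decomposition $\Db(X) = \langle \cat{A}_X, E_1, \dotsc, E_4 \rangle$ with $\cat{A}_X \simeq \Db(T,\beta)$, where $T \to \PP^1$ is the associated fibration in hyperelliptic curves of genus two and $\beta \in \Br(T)$ is the Brauer class of the even Clifford algebra of the pencil of quadrics, the conjecture becomes: $X$ is rational if and only if $\cat{A}_X$ admits a semiorthogonal decomposition each of whose components embeds fully and faithfully in $\Db$ of a smooth projective variety of dimension at most $2$. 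The intermediate statement to establish is that this categorical condition is equivalent to $\beta = 0$. If $\beta = 0$ then $\cat{A}_X \simeq \Db(T)$ with $T$ a surface, so $X$ is categorically representable in codimension $2$ for free; conversely one expects that for $\beta \neq 0$ the category $\Db(T,\beta)$ is genuinely twisted, i.e.\ cannot be reassembled from derived categories of smooth projective varieties of dimension $\leq 2$ --- the point being that any such reassembly could place a twist only on a surface component, the Brauer group of a curve over $\CC$ being trivial.

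\emph{The implication ``$X$ rational $\Rightarrow$ categorically representable in codimension $2$''.} Here I would run the weak factorization theorem to connect $X$ and $\PP^4$ through a chain of blow-ups and blow-downs of smooth projective fourfolds along smooth centers; any admissible blow-up center in a fourfold has codimension at least $2$, hence dimension at most $2$. By Orlov's blow-up formula $\Db(\mathrm{Bl}_Z W) = \langle \Db(W), \Db(Z), \dotsc, \Db(Z)\rangle$, with $\mathrm{codim}_W Z - 1$ copies of $\Db(Z)$, a blow-up enlarges a semiorthogonal decomposition into dimension-$\leq 2$ components by appending further components of dimension $\leq 2$, and $\Db(\PP^4)$ is categorically representable in codimension $2$ trivially. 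The delicate point is the blow-down: transferring such a decomposition from $\mathrm{Bl}_Z W$ down to $W$ is not automatic, since an admissible subcategory of a categorically representable category need not itself be categorically representable in the same codimension; one must argue, working with the explicit decompositions produced along the chain and peeling off the $\Db(Z)$-blocks in the right order, that $\Db(W)$ inherits a dimension-$\leq 2$ decomposition at every stage. This is exactly the bookkeeping carried out for the threefold case in the proof of Theorem~\ref{delpezzo_repre}, and the analogue for fourfolds is where the combinatorial effort goes.

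\emph{The implication ``categorically representable in codimension $2$ $\Rightarrow$ $X$ rational''.} I would split this in two. First, given a decomposition of $\Db(X)$ into dimension-$\leq 2$ components, one must deduce $\beta = 0$: one expects categorical representability in codimension $2$ to constrain the Chow motive of $X$ --- equivalently the Hodge structure on $H^3(X)$ --- strongly enough to kill $\beta$, in parallel with the threefold case where such representability forces the intermediate Jacobian to split into Jacobians of curves; making this precise requires an indecomposability analysis of $\Db(T,\beta)$ together with invariants (the Chow motive, the period and index of the Azumaya algebra representing $\beta$) that are blind to exceptional objects and to trading the derived category of a curve for that of a finite set of points, yet see a nontrivial $2$-torsion twist on a surface. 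Once $\beta = 0$ one has $\cat{A}_X \simeq \Db(T')$ for a smooth projective surface $T'$ birational to $T$. Second, and this is the heart of the matter, one must upgrade $\beta = 0$ to rationality of $X$: the class $\beta$ arises, through the Morita invariance of the even Clifford algebra under reduction by hyperbolic splitting developed in this paper, as the obstruction to the quadric bundle underlying $X$ carrying enough relative isotropic subspaces over a rational multisection, and feeding its vanishing into the relative homological projective duality and the classical parametrization of an intersection of two quadrics containing a linear subspace, one would try to produce an explicit birational map from $X$ to $\PP^4$, in the spirit of the recovery of rationality from a splitting of the intermediate Jacobian for conic bundle threefolds in \cite{bolognesi_bernardara:conic_bundles}.

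\emph{Main obstacle.} The second stage just described is the essential difficulty: extracting genuine rationality of a fourfold from the mere statement that the nontrivial component of its derived category is as simple as possible is the precise analogue of the still-open direction of Kuznetsov's cubic fourfold conjecture, and no general mechanism is known for turning such categorical simplicity into a birational map to projective space. In the threefold case of Theorem~\ref{delpezzo_repre} this is sidestepped by invoking the Clemens--Griffiths criterion together with the complete classification of rationality for degree-four Del Pezzo fibrations \cite{alekseev:dP4}, \cite{shramov:dp4}; for the fourfolds here no such classification is available, which is exactly why the statement is proposed as a conjecture, and one would expect progress only on special subfamilies for which an explicit construction --- via the lines on $X$, or via the relative geometry of the pencil of quadrics --- is within reach.
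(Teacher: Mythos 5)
This statement is a conjecture: the paper offers no proof of it, only partial evidence, so your proposal can only be judged as a strategy sketch, and as such it tracks the paper's discussion fairly closely --- the decomposition $\Db(X) = \langle \cat{A}_X, E_1,\dotsc,E_4\rangle$ with $\cat{A}_X \simeq \Db(T,\beta)$, the reduction of the categorical condition to the vanishing of $\beta$, and the honest acknowledgment that the remaining implications are open. Your treatment of ``rational $\Rightarrow$ representable'' via weak factorization, with the flagged blow-down problem, is a reasonable heuristic the paper does not attempt.

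However, you misplace the main obstacle. You present the step ``$\beta = 0 \Rightarrow X$ rational'' as ``the heart of the matter'' and as ``the precise analogue of the still-open direction of Kuznetsov's cubic fourfold conjecture, [for which] no general mechanism is known.'' In this setting that step is not open: it is exactly what the paper proves (Proposition~\ref{prop:gamma-0-is-rational}), and the mechanism is concrete --- $X$ is $k$-birational to the quadric surface bundle $Q' \to S$ obtained by quadric reduction (via \cite[Thm.~3.2]{colliot-thelene_sansuc_swinnerton-dyer:quadrics_I}), and $\beta = 0$ is equivalent, by the classical rank-$4$ isotropy criterion (Theorem~\ref{thm:well-known}), to $Q' \to S$ having a rational section, whence $Q'$ and hence $X$ is rational. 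This is precisely why ``$\beta = 0$'' cases constitute genuine evidence for the conjecture (Theorem~\ref{thm:2}). The implications that actually remain open, and that you should have isolated as the obstacles, are: (a) does categorical representability in codimension $2$ of $\Db(T,\beta)$ force $\beta = 0$ (your ``indecomposability analysis,'' which you mention but then demote); and (b) does rationality of $X$ force representability when $\beta \neq 0$ --- a live issue because the paper expects (Question~\ref{question4}) that rationality of $X$ is \emph{not} equivalent to containing a line over $\PP^1$, i.e.\ that rational $X$ with $\beta \neq 0$ should exist, for which neither your weak-factorization sketch nor anything in the paper currently produces a dimension-$\le 2$ decomposition.
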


The main evidence for this conjecture is provided in two cases of rational fibrations that are categorically representable in codimension 2.  Recall from
\cite[Thm.~2.2]{colliot-thelene_sansuc_swinnerton-dyer:quadrics_I}
that if $X$ contains a surface generically ruled over $\PP^1$, then
$X$ is rational.

\begin{theoremintro}[\S\ref{section:4fold}]
\label{thm:2}
Let $X \to \PP^1$ be a generic fibration whose fibers are
intersections of two four-dimensional quadrics over a field $k$.  Let
$T \to \PP^1$ and $\beta \in \Br(T)$ be the associated fibration in
hyperelliptic curves and Brauer class.  If $\beta=0$, then $X$ is
rational and categorically representable in codimension 2. In
particular, this is the case if $X$ contains a surface generically
ruled over $\PP^1$.
\end{theoremintro}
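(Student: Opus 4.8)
The plan is to reduce rationality of $X$ to the vanishing of the Brauer class $\beta$, using the structure of the quadric pencil together with the theorem of Colliot-Thélène, Sansuc, and Swinnerton-Dyer cited above. First I would recall the geometric setup: the fibration $X \to \PP^1$ in complete intersections of two four-dimensional quadrics gives rise to a pencil of six-dimensional quadrics over $\PP^1 \times \PP^1$ (equivalently over a surface $S = \PP^1_{[s:t]} \times \PP^1_{[\lambda:\mu]}$, the second factor parametrizing the pencil), whose even Clifford algebra has a nontrivial component; the associated discriminant double cover is the hyperelliptic fibration $T \to \PP^1$, and $\beta \in \Br(T)$ is the Brauer class of the restricted even Clifford algebra. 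The key input, already established earlier in the paper, is a semiorthogonal decomposition $\Db(X) = \langle \cat{A}_X, E_1, \dotsc, E_4\rangle$ together with an equivalence $\cat{A}_X \simeq \Db(T,\beta)$ coming from relative homological projective duality for the pencil of quadrics.

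The main step is the case $\beta = 0$. When $\beta$ is trivial, the twisted derived category $\Db(T,\beta)$ is equivalent to $\Db(T)$ itself, so $\cat{A}_X \simeq \Db(T)$ and hence
$$
\Db(X) = \langle \Db(T), E_1, \dotsc, E_4 \rangle,
$$
with $T \to \PP^1$ a (smooth, by genericity) hyperelliptic curve fibration of dimension two. This already yields categorical representability of $X$ in codimension $2$, since $\dim X = 4$ and each component is either $\Db$ of a surface or an exceptional object. For rationality, I would argue that the vanishing of $\beta$ is equivalent to the existence of a rational section of an associated bundle of quadrics — concretely, $\beta = 0$ on the generic fibre of $T \to \PP^1$ means the corresponding even Clifford algebra is split, which, by the standard dictionary between Clifford algebras and isotropic subspaces (quadric reduction by hyperbolic splitting, as developed in the paper), produces a rational point on, or an isotropic subspace in, the relevant quadric over the function field. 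Translating this back to $X$, one obtains a surface $W \subset X$ that dominates $\PP^1$ and is generically a conic bundle — more precisely, a ruled surface over $\PP^1$ — after which one invokes \cite[Thm.~2.2]{colliot-thelene_sansuc_swinnerton-dyer:quadrics_I} to conclude that $X$ is rational. The final sentence of the theorem is then immediate: if $X$ already contains a surface generically ruled over $\PP^1$, then by that same theorem $X$ is rational, and moreover the presence of such a surface forces the generic fibre of the quadric pencil to be isotropic, hence $\beta = 0$, so the categorical representability conclusion also applies.

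The step I expect to be the main obstacle is the precise translation between the arithmetic of $\beta$ and the geometry producing the ruled surface. It is one thing to say $\beta = 0$ generically on $T$; it is another to extract from this an honest surface inside $X$ defined over $k$ and to control its behaviour over the degeneration locus (the simple-degeneration divisor where the pencil drops rank). Here I would lean on the genericity/standardness hypothesis (Definition~\ref{def:generic-fibration}) to ensure the degeneration is simple along a smooth divisor, so that the even Clifford algebra and its discriminant cover behave well, and on the explicit linear-algebra description of isotropic reduction to guarantee that a generic isotropic subspace spreads out to a family over $\PP^1$ whose total space is the desired ruled surface. The compatibility of the Morita equivalence from quadric reduction with the relative homological projective duality — both developed in the body of the paper — is what makes this spreading-out argument work, and verifying that compatibility over the bad fibres is the delicate point.
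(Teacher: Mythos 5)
Your categorical half is exactly the paper's: genericity gives simple degeneration along a smooth divisor, relative homological projective duality gives $\Db(X)=\langle \cat{A}_X, E_1,\dotsc,E_4\rangle$ with $\cat{A}_X\simeq\Db(S,\kc_0)\simeq\Db(T,\beta)$, and $\beta=0$ turns this into $\cat{A}_X\simeq\Db(T)$ with $T$ a surface, hence representability in codimension $2$. For rationality, however, you take a genuinely different and more roundabout route. The paper (Proposition~\ref{prop:gamma-0-is-rational}) never produces a ruled surface inside $X$: it notes that $X$ is $k$-birational to the quadric \emph{surface} bundle $Q'\to S$ obtained by quadric reduction of the linear span $Q\to S$ (this is \cite[Thm.~3.2]{colliot-thelene_sansuc_swinnerton-dyer:quadrics_I}, applied over $k(\PP^1)$ and descended via Lemma~\ref{lem:rationality}), and that $\beta=0$ is equivalent, by the rank-$4$ isotropy criterion of Theorem~\ref{thm:well-known}, to $Q'\to S$ having a rational section; an isotropic quadric surface bundle over the rational surface $S$ is $k$-rational, hence so is $X$. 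Your route---extract from $\beta=0$ a surface in $X$ ruled over $\PP^1$ and then quote \cite[Thm.~2.2]{colliot-thelene_sansuc_swinnerton-dyer:quadrics_I}---is also present in the paper (it is Lemma~\ref{lemma:lineifftrivial}), but the step you yourself flag as delicate is a real gap as written: a rational section of $Q'$ gives an isotropic plane of the rank-$6$ form over $k(S)$, i.e.\ a line over $\PP^1$ in the pencil $Q$, and transporting that line into the \emph{common} zero locus $X$ of the two forms is not the Amer--Brumer theorem for points (Theorem~\ref{amer-brumer}) but its generalization to linear subspaces, Leep's version of the Amer theorem \cite[Thm.~2.2]{leep:amer_brumer_arbitrary}. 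Without that input your spreading-out argument does not close; the paper's birationality argument sidesteps it entirely. Two smaller imprecisions: the base of the pencil is the Hirzebruch surface $S=\PP(L_1\dual\oplus L_2\dual)$, not $\PP^1\times\PP^1$; and mere isotropy of the generic fibre of $Q\to S$ is automatic ($k(S)$ is a $C_2$-field) and does not imply $\beta=0$---what a ruled surface in $X$ actually provides is a two-dimensional isotropic subspace, whose hyperbolic reduction yields the section of $Q'$ that kills $\beta$.
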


These results are obtained from new general constructions involving
the derived category of a smooth projective variety $X$ admitting a
fibration $X \to Y$ in complete intersection of quadrics.  Over an
algebraically closed field, the derived category of an intersection of
two quadrics was studied by
Kapranov~\cite{kapranov:intersection_quadrics} and
Bondal--Orlov~\cite[\S2]{bondal_orlov:semiorthogonal}.  Along with
results in the algebraic theory of quadratic forms, we utilize three
main tools:\ homological projective duality \cite{kuznetsov:hpd}
extended to a relative context for quadric fibrations (see
\S\ref{subsec:relative-hpd}), Kuznetsov's~\cite{kuznetsov:quadrics}
description of the derived category of a quadric fibration via the
even Clifford algebra, and a Morita invariance result for the even
Clifford algebra under ``quadric reduction'' by hyperbolic splitting
(see \S\ref{subsec:hyperbolic_algebraic}).  We now give an overview of
these tools.

\medskip

Let $Q \to S$ be a flat quadric fibration of relative dimension $n-2$
over a scheme with associated even Clifford algebra $\kc_0$.
Kuznetsov \cite{kuznetsov:quadrics}, extending the seminal work of
Kapranov \cite{kapranov:quadric}, \cite[\S4]{kapranov:derived} on
derived categories of quadrics, exhibits a semiorthogonal
decomposition
\begin{equation}\label{decomposition-of-quadric}
\Db(Q)= \langle \Db(S,\kc_0), \Db(S)_1, \dotsc, \Db(S)_{n-2} \rangle,
\end{equation}
where $\Db(S)_i \simeq \Db(S)$ for all $1 \leq i \leq n-2$. The category
$\Db(S,\kc_0)$ is the component of $\Db(Q)$ encoding nontrivial
information about the quadric.  One of the main results of this paper
is the derived invariance of this category under the geometric process
of ``quadric reduction'' by hyperbolic splitting, see
\S\ref{subsec:hyperbolic_algebraic}.  This allows us to pass to
smaller dimensional quadric fibrations.

``Quadric reduction'' by hyperbolic splitting is a classical
construction which, starting from a quadric $Q \subset \PP^{n+1}$ with
a smooth rational point $x$, describes a quadric $Q' \subset
\PP^{n-1}$ with the same degeneration type. Roughly, $Q'$ is the base
of the cone obtained by intersecting $T_x Q$ and $Q$.  This is the
analogue, in the language of quadratic forms, of ``splitting off a
hyperbolic plane.''
This construction can be performed relatively over a base scheme $S$,
given quadric fibration $Q \to S$ with possibly singular fibers and a
smooth section (i.e., a section $S \to Q$ avoiding singular points of
fibers), see \S\ref{subsec:hyperbolic_geometric}.

\begin{theoremintro}[Corollary~\ref{morita}]
\label{thm:3}
Let $S$ be a regular integral scheme, $Q \to S$ a quadric fibration
with simple degeneration along a regular divisor, $Q' \to S$ the
quadric fibration obtained by quadric reduction along a smooth
section, and $\kc_0$ and $\kc_0'$ the respective even Clifford
algebras. If the relative dimension is odd, assume that 2 is
invertible on $S$.  Then there is an equivalence $\Db(S,\kc_0) \simeq
\Db(S,\kc_0')$.
\end{theoremintro}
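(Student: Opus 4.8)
The plan is to exhibit $\kc_0$ and $\kc_0'$ as Morita-equivalent sheaves of $\OO_S$-algebras, and then to descend this equivalence to bounded derived categories by standard ``sheafy'' Morita theory. Write $(V,q,L)$ for the $L$-valued quadratic form on a vector bundle over $S$ defining $Q\to S$; it has rank $n+2$, where $n\ge 2$ is the relative dimension of $Q$, so that $Q'\to S$ is a genuine quadric fibration, of relative dimension $n-2$. By the description of quadric reduction by hyperbolic splitting (see \S\ref{subsec:hyperbolic_geometric}), a regular section $\sigma\colon S\to Q$ is the same thing as an isotropic sub-line-bundle $N\subset V$ which is fiberwise not contained in the radical of $q$; the form then splits canonically and orthogonally,
$$
(V,q,L)\;\cong\;(H(N),q_H,L)\perp(V',q',L),
$$
where $H(N)=N\oplus(N\dual\otimes L)$ carries the hyperbolic form $q_H$, $V'=N^{\perp}/N$, and $(V',q',L)$ defines $Q'\subset\PP(V')$. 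Since simple degeneration of $Q$ is inherited by $Q'$ (\S\ref{subsec:hyperbolic_geometric}), the fibers of $q'$ have corank at most $1$, and as $\rk V'=n\ge 2$ the form $q'$ is fiberwise nowhere totally isotropic; this last property is exactly what will make the hyperbolic splitting visible at the level of even Clifford algebras along the degeneration divisor.

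The algebraic heart of the matter is a relative, $L$-twisted version of the classical behaviour of the even Clifford algebra under splitting off a hyperbolic plane. The isotropic line $N\subset V\subset\kc(V,q,L)$ determines a canonical idempotent $e=e_N\in\kc_0$ (locally $e=x\cdot y$, with $x$ a local generator of $N$ and $y$ the generator of $N\dual\otimes L$ with $b_q(x,y)=1$; this is independent of the rescaling of $x$), and I would prove: (a) the inclusion $V'\hookrightarrow V$ induces an isomorphism of $\OO_S$-algebras $\kc_0(V',q',L)\cong e\,\kc_0\,e$; and (b) $e$ is a \emph{full} idempotent, $\kc_0\,e\,\kc_0=\kc_0$. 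Statement (a) uses that $V'$ is orthogonal to $H(N)$, so the Clifford image of $V'$ commutes with $e$, together with the fact that multiplication by $e$ provides an inverse; this is a direct computation, which may be done locally or fiberwise. Statement (b) is obtained from the Peirce decomposition of $\kc_0$ along $e$: one finds that $e$ is full if and only if the multiplication $\kc_1(V',q',L)\otimes\kc_1(V',q',L)\to\kc_0(V',q',L)$ on the odd Clifford bimodule is surjective, which is in turn equivalent to $q'$ being fiberwise not totally isotropic — precisely the property secured above. Granting (a) and (b), the left ideal $\kf:=\kc_0\,e$ (which as a right $\kc_0'=e\kc_0 e$-module is $\kf\cong\kc_0(V',q',L)\oplus\kc_1(V',q',L)$) is finite locally free over $\OO_S$ and is a faithfully projective generator over $\kc_0'$, i.e.\ a $(\kc_0,\kc_0')$-progenerator; by the Morita theorem $\kc_0\cong\EEnd_{\kc_0'}(\kf)$ and $\kc_0$, $\kc_0'$ are Morita equivalent over $\OO_S$ via $\kf$.

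It remains to pass to derived categories. Because $\kf$ is finite locally free over $\OO_S$ it is $\OO_S$-flat, and it is finite locally projective over $\kc_0'$; hence the mutually quasi-inverse functors $\kf\otimes_{\kc_0'}(-)$ and $\HHom_{\kc_0}(\kf,-)$ (with the obvious conventions on sides) are exact, preserve coherence and boundedness, and define inverse equivalences between the abelian categories of coherent $\kc_0'$- and $\kc_0$-modules; applying them to bounded complexes yields $\Db(S,\kc_0)\simeq\Db(S,\kc_0')$.

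The main obstacle is the pair of statements (a), (b) in the relative degenerate setting. For nondegenerate forms over a field these are classical; the work here is (i) to carry them over an arbitrary regular integral base with $L$-valued forms, propagating the twists by $N$ and $L$ consistently through the Clifford-algebra formalism, and (ii) — the genuinely delicate point — to control the situation over the degeneration divisor $D\subset S$, where the fibers of $q'$ acquire a one-dimensional radical and the even Clifford algebras fail to be Azumaya. It is exactly there that the full force of ``simple degeneration'', together with $\rk V'\ge 2$ and the fact that quadric reduction preserves simple degeneration, is used to keep $e$ a full idempotent (equivalently, $\kf$ a $\kc_0'$-progenerator), which is what ultimately makes the Morita equivalence — and hence the derived equivalence — valid over all of $S$ rather than merely over its generic point.
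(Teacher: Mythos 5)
There is a genuine gap in the globalization step, and it is exactly the issue the paper's proof is designed to work around. You assert that a regular section (equivalently, a regular isotropic line subbundle $N\subset V$) yields a \emph{canonical} orthogonal decomposition $(V,q,L)\cong H(N)\perp(V',q',L)$. This is false over a general base: what $N$ canonically gives you are the exact sequences $0\to N\to N^{\perp}\to N^{\perp}/N\to 0$ and $0\to N^{\perp}\to V\to\HHom(N,L)\to 0$ (the latter exact because $N$ is regular, cf.\ Proposition~\ref{prop:regular_isotropic}), but a splitting of these, i.e.\ a complement to $N^{\perp}$ in $V$ and to $N$ in $N^{\perp}$, exists only Zariski-locally (even over a field the complement is a choice, not canonical). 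This is precisely why the paper proves the Isotropic Splitting Principle (Theorem~\ref{thm:isotropic_splitting_principle}): the hyperbolic decomposition is obtained only after pullback along a Zariski-locally trivial affine bundle $V\to S$, and then restated as holding over the members of a Zariski open cover of $S$. Your idempotent $e=xy\in\kc_0$ inherits this local ambiguity: it depends on a choice of lift $y$ of the generator of $E/N^{\perp}\cong\HHom(N,L)$ to $E$, and replacing $y$ by $y+n$ with $n\in N^{\perp}$ changes $e$ to $e+xn\neq e$. You never show that the left ideal $\kf=\kc_0 e$, or the induced $(\kc_0,\kc_0')$-bimodule structure, is independent of these choices and hence globally defined; without that, the local Morita bimodules may fail to glue, the obstruction living in a nontrivial cohomology set.

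The paper's route is to accept that only local Morita equivalence is available from the hyperbolic-split Proposition~\ref{prop:split_hyperbolic_case} (which plays the role of your (a)+(b), phrased in terms of the bimodule $\kp=\kc_0(q')\oplus N^{\vee}\otimes\kc_1(q')$ rather than an idempotent, under the \emph{hypothesis} of a global orthogonal splitting) and then to handle the gluing by a genuinely global argument: in even rank, pass to the discriminant cover $T$, observe that $T$ is regular because $S$ and $D$ are, note that Zariski-local Brauer equivalence of the Azumaya algebras $\kb_0,\kb_0'$ on $T$ implies global Brauer equivalence by Auslander--Goldman (via injectivity of $\Br(T)\to\Br(k(T))$); in odd rank, a parallel argument on a Deligne--Mumford ``root'' stack. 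This is exactly where the hypothesis that $S$ be regular and integral enters, and it is a telling symptom that your argument never invokes it: if the global bimodule were available for free, the regularity assumption would be superfluous, but the authors' treatment shows it is not. Your local analysis (the Peirce decomposition along $e$, the fullness criterion in terms of surjectivity of $\kc_1(q')\otimes\kc_1(q')\to\kc_0(q')$, the role of primitivity/simple degeneration in keeping $e$ full along $D$) is sound and corresponds to the content of Proposition~\ref{prop:split_hyperbolic_case}; what is missing is the descent from local to global, and that is the substantive part of the theorem.
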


This result is classical if $S$ is the spectrum of a field and the
quadric $Q$ is smooth, see \cite[Lemma~14.2]{elman_karpenko_merkurjev}.
Along the way to proving Theorem~\ref{thm:3}, we provide new
generalizations, to a degenerate setting, of fundamental results in
the theory of quadratic forms (see
\S\ref{subsec:hyperbolic_algebraic}). These include an isotropic
splitting principle (Theorem~\ref{thm:isotropic_splitting_principle})
and an orthogonal sum formula for the even Clifford algebra (Lemma
\ref{lem:even_clifford_perp}).  In general, algebraic results of this
type are quite limited in the literature (see e.g.,
\cite{bayer_fainsilber} and
\cite[IV.4.8]{knus:quadratic_hermitian_forms}).  Some stack theoretic
considerations are required in the case of odd rank.  In
Appendices~\ref{appendix_clifford} and \ref{subsec:grassmannian}, we
prove the equality of differing constructions and interpretations of
the even Clifford algebra in the literature.

Such results on degenerate forms may prove useful in their own right.
There has been recent focus on such forms from various number
theoretic perspectives.  An approach to Bhargava's~\cite{bhargava:ICM}
construction of moduli spaces of ``rings of low rank'' over arbitrary
base schemes is developed by Wood~\cite{wood:binary},
who must deal with degenerate forms (of higher degree).  In related
developments, building on the work of
Delone--Faddeev~\cite{delone_faddeev} over $\Z$ and Gross--Lucianovic
\cite{gross_lucianovic} over local rings, V.\
Balaji~\cite{balaji_ternary}, and independently
Voight~\cite{voight:quaternion_rings}, used Clifford algebras of
degenerate quadratic forms of rank 3 to classify degenerations of
quaternion algebras over arbitrary bases.

\medskip

Homological projective duality was introduced by Kuznetsov
\cite{kuznetsov:hpd} to study semiorthogonal decompositions of
hyperplane sections (see also \cite{kuznetsov:hyp-sections}).  For
example, consider a finite set of quadric hypersurfaces
$\{Q_i\}_{i=0}^r$ in a projective space $\PP^{n-1} = \PP(V)$ and their
complete intersection $X$.  Let $Q$ be the linear system of quadrics
generated by the $Q_i$, which is a quadric fibration $Q \to \PP^{r}$
of relative dimension $n-2$, and let $\kc_0$ be its associated even
Clifford algebra. The derived categories $\Db(X)$ and
$\Db(\PP^r,\kc_0)$ are strongly related \cite{kuznetsov:quadrics}.  In
particular, if $X$ is Fano or Calabi--Yau, then there is a fully
faithful functor $\Db(\PP^r,\kc_0) \to \Db(X)$.  In
\S\ref{section:tools}, we describe a relative version of this
construction, replacing $V$ by a vector bundle $E$ over a smooth
variety $Y$, the quadrics by flat quadric fibrations $Q_i \to Y$
contained in $\PP(E)$, the intersection by the relative complete
intersection $X \to Y$ of the quadric fibrations, the linear system of
quadrics by the \linedef{linear span quadric fibration} $Q \to S$
(which is a flat quadric fibration over a projective bundle $S \to Y$,
see Definition~\ref{linearspan}), and $\kc_0$ by the even Clifford
algebra of $Q \to S$.  Then $\Db(X)$ and $\Db(S,\kc_0)$ are similarly
related:\ if the generic fiber of $X \to Y$ is Fano or Calabi--Yau,
then there is a fully faithful functor $\Db(S,\kc_0) \to \Db(X)$.

Considering the relationship between intersections of quadrics and
linear spans also has arithmetic roots.  The Amer--Brumer theorem
(Theorem~\ref{amer-brumer}), which is indispensable for our work,
states that the intersection of two quadrics has a rational point if
and only if the linear system of quadrics (or pencil of quadrics) has
a rational section over $\PP^1$.  Versions of this theorem also hold
for 0-cycles of degree 1 on intersections of more quadratic forms
\cite{colliot-thelene_levine}.  The general subject is also considered
in \cite{colliot-thelene_sansuc_swinnerton-dyer:quadrics_I}.

Using the above described categorical and algebraic tools, we proceed
in the relative setting described above.  Assuming $X$ is a smooth
projective variety and the generic fiber of $X \to Y$ is Fano (resp.\
Calabi--Yau)---a condition that is satisfied when $2r+2<n$ (resp.\
$2r+2=n$)---we obtain a semiorthogonal decomposition
$$
\Db(X) = \langle \Db(S,\kc_0), \Db(Y)_{1}, \dotsc, \Db(Y)_{n-2r}
\rangle,
$$
where $\Db(Y)_i \simeq \Db(Y)$ for $1 \leq i \leq n-2r$ (resp.\ an
equivalence $\Db(S,\kc_0) \simeq \Db(X)$).  The category
$\Db(S,\kc_0)$ is then the important component $\cat{A}_X$ of $\Db(X)$
remarked on earlier.  Moreover, if $Q \to S$ admits a smooth section,
we can more closely examine the relationship between $\Db(X)$ and
$\Db(Q)$ by performing quadric reduction.  In the cases we consider,
such sections are guaranteed by the work of Alexeev
\cite{alekseev:dP4} and by the theory of quadratic forms (see
Lemma~\ref{lem:C_2}), or by the general results of
\cite{campana_peternell_pukhlikov} and \cite{graber_harris_starr} on
fibrations over curves with rationally connected fibers.

\smallskip

This article is organized as follows. In \S\ref{sec:Quadratic_forms},
we adapt many standard results in the theory of regular quadratic
forms and even Clifford algebras to possibly degenerate forms with
values in line bundles, eventually leading to a proof of the Morita
invariance of the even Clifford algebra under quadric reduction by
hyperbolic splitting.  In \S\ref{section:tools}, we introduce the
basic notions and relevant results in derived category theory and then
detail the relative version of homological projective duality for
quadric fibrations. Finally, we study the explicit case of varieties
$X \to Y$ which are intersection of two quadric fibrations:\ in
\S\ref{elliptic}, we consider the case of genus 1 fibrations, where
our results are analogous to (but different form) well-known ones; in
\S\ref{section:delpezzo}, we consider the case of quartic Del Pezzo
fibrations over curves; and in \S\ref{section:4fold}, we consider
fourfolds $X \to \PP^1$ fibered in complete intersections of two
quadrics.  The last two applications require working over an
algebraically closed field (assumed to be of characteristic zero for
certain statements).  In Appendix~\ref{appendix_clifford}, we compare
various constructions of the even Clifford algebra in the literature
and recall some of its functorial properties.  In
Appendix~\ref{subsec:grassmannian}, we provide a proof that for
quadric fibrations of relative dimension 2 and 4, the Brauer class of
the even Clifford algebra over the discriminant cover coincides with
that arising from the Stein factorization of the lagrangian
grassmannian considered in \cite{hassett_varilly:K3} and
\cite{kuznetsov:cubic_fourfolds}.

\subsection*{Notations}
In general, we work over an arbitrary base field $k$.
As we indicate in \S\ref{basechange}, the basic results in derived
categories and semiorthogonal decompositions that we appeal to hold
over any field.  
Any triangulated category is assumed to be essentially small and
support a $k$-linear structure.  Whenever a functor between derived
categories is given, it will be denoted as if underived. For example,
if $f: X \to Y$ is a morphism, then denote by $f^*$ and $f_*$ the
derived pull-back and push-forward, respectively.  
By a scheme, we mean a noetherian separated
scheme.  By a variety, we mean a reduced scheme of finite type over a
field.
 If $X$ is a scheme,
$\ko_X(1)$ a line bundle, and $\cat{A} \subset \Db(X)$ a subcategory,
we denote by $\cat{A}(i)$ the image of $\cat{A}$ under the
autoequivalence $\otimes \ko_X(i)$ of $\Db(X)$. In the relative
setting $X \to Y$, we use the same notation for a given line bundle
$\ko_{X/Y}(1)$ on $X$.  
If $\ka$ is a $\OO_X$-algebra, which is coherent as an $\OO_S$-module
and right (or left) noetherian, then we denote by $\Coh(X,\ka)$ the
abelian category of finitely presented right (or left) $\ka$-modules
and by $\Db(X,\ka)$ its derived category. 
By a vector bundle on $X$ we mean a locally free $\ko_X$-module of
finite constant rank.  By a projective bundle on $Y$ we mean $\PP(E) =
\PProj S^{\bullet}(E\dual) \to Y$ for a vector bundle $E$ on $X$.

\medskip

{\small\noindent{\bf Acknowledgments.}  %
Much of this work has been developed during visits (both long and
short) of the authors at the Max Planck Institut f\"ur Mathematik in
Bonn, Universit\"at Duisburg--Essen, Universit\'e Rennes 1, ETH
Z\"urich, Emory University, and the Courant Institute of Mathematical
Sciences at New York University.  The hospitality of each institute is warmly acknowledged.
The first author is partially supported by a National Science Foundation
grant MSPRF DMS-0903039 and by an NSA Young Investigator Grant. The
second author was partially supported by the SFB/TR 45 `Periods,
moduli spaces, and arithmetic of algebraic varieties'.  The authors
would specifically like to thank A.\ Chambert-Loir, J.-L.\
Colliot-Th\'el\`ene, B.\ Hassett, M.-A.\ Knus, E.\ Macr\`\i, L.\
Moret-Bailly, R.\ Parimala, P.\ Stellari, V.\ Suresh and A.\ Vistoli for many
helpful discussions.  Extensive and incisive comments from an
anonymous referee, who is heartily thanked, helped to dramatically
improve an earlier version of the manuscript.}

\section{Quadratic forms and even Clifford algebras}
\label{sec:Quadratic_forms}

This section is mostly devoted to generalizing some well-known results
in the theory of regular quadratic forms (or ``quadratic spaces'') to
quadratic forms with degeneration over schemes.  While our treatment
proceeds in the ``algebraic theory of quadratic forms'' style, we
nevertheless keep the geometric perspective of quadric fibrations in
mind.

\subsection{Quadratic forms}
\label{subsec:Quadratic_forms_and_quadric_fibrations}

Let $Y$ be a scheme.  Fix vector bundles $E$ and $L$ on $Y$.  Denote
by $T^2 E$, $S^2 E$, and $S_2 E$ the tensor square, symmetric square,
and submodule of symmetric tensor squares of $E$, respectively.  We
first collect together various notions of quadratic form with values
in a $L$, and show that they are all equivalent.

\begin{lemma} 
\label{lem:quad}
The following sets of objects are in natural bijection.
\begin{enumerate}
\item \label{lem:quad.1} Morphisms of sheaves $q : E \to L$
satisfying:\ $q(a\, v)=a^2\, q(v)$ on sections $a$ of $\OO_Y$ and $v$
of $E$; and that the morphism of sheaves $b_q : E \times E \to L$, defined
by $b_q(v,w) = q(v+w) - q(v) - q(w)$ on sections $v$ and $w$ of $E$,
is $\OO_Y$-bilinear.

\item \label{lem:quad.2} Morphisms of $\OO_Y$-modules $S_2 E \to L$, i.e., global
sections $\Gamma(Y,\HHom(S_2 E, L))$.

\item \label{lem:quad.3} Morphisms of $\OO_Y$-modules $L\dual \to
S^2(E\dual)$, i.e., global sections $\Gamma(Y,\HHom(L\dual,
S^2(E\dual))$.

\item \label{lem:quad.4} Global sections $\Gamma(Y,S^2(E\dual)\tensor L)$.

\item \label{lem:quad.5} Global sections $s_q \in \Gamma(\PP(E),\OO_{\PP(E)/Y}(2)\tensor
p\pullback L)$, where $p : \PP(E) \to Y$ is the projection.
\end{enumerate}
\end{lemma}
\begin{proof}
Let $\QQuad(E,L)$ be the Zariski presheaf (which is actually a sheaf,
and moreover, naturally an $\OO_Y$-module), whose sections over $U$
are the morphisms of sheaves $q : E|_U \to L|_U$ on $U$, satisfying
the conditions in \eqref{lem:quad.1}.  Let $\BBil(E,L)$ be the Zariski
presheaf (which is an $\OO_Y$-module), whose sections over
$U$ are the $\OO_U$-bilinear morphisms $b : E|_U \times E|_U \to
L|_U$.  Then there is a commutative diagram of morphisms of
$\OO_Y$-modules:
\begin{equation}
\label{eq:quad}
\begin{split}
\xymatrix@R=10pt{
T^2(E\dual)\tensor L \ar[dd]_{\text{\rotatebox{90}{$\sim$}}} \ar@{->>}[rr] \ar@{->>}[rd]&& \HHom(S_2 E,L)\ar@<2mm>[dd]_{\text{\rotatebox{90}{$\sim$}}}\\
& S^2(E\dual)\tensor L \ar[ru]_{\text{\rotatebox{15}{$\sim$}}} \ar[rd]^{\text{\rotatebox{-15}{$\sim$}}} & \\
\BBil(E,L) \ar@{->>}[rr] & & \QQuad(E,L) \ar@<2mm>[uu]
}
\end{split}
\end{equation}
The left vertical map is the canonical isomorphism $f \otimes g
\otimes l \mapsto \bigl( (v,w) \mapsto f(v)g(w)l \bigr)$.  The bottom
horizontal map is $b \mapsto \bigl( v \mapsto b(v,v) \bigr)$.  The
composition of these maps clearly factors through $S^2(E\dual) \tensor
L$, hence the diagonal maps from upper left to lower right.  The
vertical downward map at right is $\vp \mapsto \bigl(v \mapsto
\vp(v\tensor v)\bigr)$, while the upward map is $q \mapsto \bigl(
v\tensor v \mapsto q(v)\bigr)$.  The top horizontal map is $f \tensor
g \tensor l \mapsto \bigl(v \tensor v \mapsto f(v)g(v)l \bigr)$, this
clearly factor through $S^2(E\dual)\tensor L$, hence the diagonal map
from the center to upper right.  (In particular, this gives a
canonical isomorphism $S^2(E\dual) \to S_2(E)\dual$.)
In defining maps to $\HHom(S_2 E,L)$, we observe that 
$S_2 E$ is the sheafification of the $\OO_Y$-submodule of
$E \tensor E$ generated by all sections of the form $v\tensor v$, thus
by the universal property of sheafification, to define an
$\OO_Y$-module morphism $S_2 E \to L$ it suffices to specify it on
sections of the form $v\tensor v$.

The fact that the triangle at right consists of isomorphisms is
checked locally, see \cite[Lemma~2.1]{swan:quadric_hypersurfaces} or
\cite[Prop.~6.1]{wood:binary}.  Taking global sections in this
triangle establishes the bijection between \eqref{lem:quad.1},
\eqref{lem:quad.2}, and \eqref{lem:quad.4}.  The evident
$\OO_Y$-isomorphisms $S^2(E\dual) \tensor L \to (L\dual)\dual \tensor
S^2(E\dual) \to \HHom(L\dual, S^2(E\dual))$ prove the bijection between
\eqref{lem:quad.4} and \eqref{lem:quad.3}.  Finally, the isomorphism
$\Gamma(Y,S^2(E\dual)\tensor L) \isom \Gamma(\PP(E),\OO(2)\tensor
p\pullback L)$ establishes the bijection between \eqref{lem:quad.4} and \eqref{lem:quad.5}.
\end{proof}

By a \linedef{(line bundle-valued) quadratic form} on $Y$, we mean a
triple $(E,q,L)$ as in Lemma~\ref{lem:quad}(1), with $L$ a line bundle
on $Y$.  We will mostly dispense with the title ``line
bundle-valued.''  
The \linedef{rank} of $(E,q,L)$ is
the rank of $E$.

A quadratic form $(E,q,L)$ is \linedef{primitive} if the associated
$\OO_Y$-module morphism $S_2E \to L$ is an epimorphism.  This is
equivalent to $q$ being nonzero over the residue field of any point of
$Y$ or to the associated morphism $L\dual \to S^2(E\dual)$ being
a monomorphism with locally free cokernel.

There is an evident \linedef{polar} $\OO_Y$-module morphism $\psi_{q} : E \to
\HHom(E,L)$ associated to $b_{q}$. A quadratic form $(E,q,L)$ is
\linedef{regular} if $\psi_{q}$ is an isomorphism.  A quadratic form
of odd rank can never be regular over a characteristic 2 point, and
the notion of \linedef{semiregularity} is needed, see
\cite[IV.3]{knus:quadratic_hermitian_forms}.  We say that a quadratic
form is \linedef{generically (semi)regular} if it is (semi)regular at
every maximal generic point of $Y$ (this is called
\linedef{non-degenerate} in \cite{knus:quadratic_hermitian_forms}).

If $(E,q,L)$ has rank $n$, then the signed determinant
$(-1)^{n(n-1)/2}\det \psi_q : \det E \to \det E\dual \tensor
L^{\tensor n}$ gives rise to a global section $\disc(q) \in
\Gamma(Y,(\det E\dual)^{\tensor 2} \tensor L^{\tensor n})$ called the
\linedef{discriminant} (in odd rank, one must take the
\linedef{half-discriminant}, cf.\
\cite[IV.3.1]{knus:quadratic_hermitian_forms}).  The zero scheme $D
\subset Y$ of the (half-)discriminant is the \linedef{discriminant
divisor}.  Its reduced subscheme is the locus of points of $Y$ where
$q$ is not (semi)regular. If $q$ is generically (semi)regular, then
the discriminant divisor is the subscheme associated to an effective
Cartier divisor.

Assume that $(E,q,L)$ is generically (semi)regular, let $j : D \to
S$ denote the discriminant divisor and $i : U \to S$ its open
complement.
The restriction of the polar morphism $\psi_q{}|_U : E|_U \to
\HHom(E,L)|_U$ is then an $\OO_U$-module isomorphism and we have an exact
sequence of $\OO_D$-modules
$$
0 \to R \to j\pullback E \mapto{j\pullback\psi_q}
j\pullback\HHom(E,L) \to C \to 1 
$$
where we define $R$ and $C$ to be the sheaf kernel and cokernel of
$j\pullback\psi_q$, respectively.  We call $R$ the \linedef{bilinear
radical} of $j\pullback q$.  The \linedef{quadratic radical} is the
$\OO_D$-submodule $R_q \subset R$ where $q|_D$ vanishes.  The bilinear
and quadratic radical coincide over any point where 2 is invertible.
Note that as $E$ is locally free and $q$ is generically (semi)regular,
$\psi_q$ is injective and $R$ and $R_q$ are torsion-free
$\OO_D$-modules.

A quadratic form $(E,q,L)$ of rank $n$ has \linedef{(at most) simple
degeneration} on $Y$ if it has a (semi)regular quadratic subform of
rank at least $n-1$ over the local ring of every point of $Y$.
Equivalently, the quadratic radical $R_q$ has rank $\leq 1$.  For an
(nonempty) effective Cartier divisor $D \subset S$, we say that $q$
has simple degeneration \linedef{along} $D$ if $(E,q,L)$ has simple
degeneration and is generically (semi)regular with discriminant
divisor $D$.

A \linedef{similarity} between quadratic forms $(E,q,L)$ and
$(E',q',L')$ is a pair $(\vp,\lambda)$ consisting of $\OO_Y$-module
isomorphisms $\vp : E \to E'$ and $\lambda : L \to L'$ such that
either of the following equivalent diagrams,
\begin{equation}
\label{similarity}
\begin{split}
\xymatrix{
E \ar[d]_{\vp} \ar[r]^{q}  & L \ar[d]^{\lambda} \\
E' \ar[r]^{q'} & L' \\
}
\qquad
\xymatrix{
S_2E \ar[d]_{S_2\vp} \ar[r]  & L \ar[d]^{\lambda} \\
S_2E' \ar[r] & L' \\
}
\end{split}
\end{equation}
commute, which happens if and only if we have $q'(\vp(v)) = \lambda
(q(v))$ on sections.

Given quadratic forms $(E_1,q_1,L)$ and $(E_2,q_2,L)$ with values in
the same line bundle, their \linedef{orthogonal sum} $(E_1,q_1,L)
\perp (E_2,q_2,L) = (E_1 \oplus E_2, q_1 \perp q_2,L)$ is defined by
$(q_1 \perp q_2)(v) = q_1(v)+q_2(v)$.

\subsection{Quadric fibrations}
\label{subsec:quadric_fibrations}

The \linedef{quadric fibration} $\pi : Q \to Y$ associated to a
nonzero quadratic form $(E,q,L)$ of rank $n \geq 2$ is the restriction
of the projection $p : \PP(E) \to Y$ via the closed embedding $j : Q
\to \PP(E)$ defined by the vanishing of the global section $s_q \in
\Gamma(\PP(E),\OO_{\PP(E)/Y}(2)\tensor p\pullback L)$.  Write
$\OO_{Q/Y}(1) = j\pullback \OO_{\PP(E)/Y}(1)$.  
The form $(E,q,L)$ is primitive if and only if $\pi : Q \to Y$ is flat
of relative dimension $n-2$, see
\cite[8~Thm.~22.5]{matsumura:commutative_ring_theory}.
The fiber $Q_y$ is a smooth projective quadric (resp.\ a quadric cone
with isolated singularity) over any point $y$ where $(E,q,L)$ is
(semi)regular (resp.\ has simple degeneration).

Define the \linedef{projective similarity} class of a quadratic form
$(E,q,L)$ to be the set of similarity classes of quadratic forms
$(N,q_N,N^{\tensor 2}) \tensor (E,q,L) = (N\tensor E, q_N \tensor q,
N^{\tensor 2}\tensor L)$ ranging over all line bundles
$N$, where $q_N : N \to N^{\tensor 2}$ is the squaring form.  In
\cite{balmer_calmes:lax}, this is referred to as a
\linedef{lax-similarity} class.  Though the following should be
well-known, we could not find a proof in the literature.

\begin{proposition}
\label{prop:proj_sim_quadric}
Let $Y$ be an integral locally factorial scheme.  Let $\pi : Q \to Y$
and $\pi' : Q' \to Y$ be quadric fibrations associated to primitive
generically (semi)regular quadratic forms $(E,q,L)$ and $(E',q',L')$.
Then $(E,q,L)$ and $(E',q',L')$ are in the same projective similarity
class if and only if $Q$ and $Q'$ are $Y$-isomorphic.
\end{proposition}
\begin{proof}
Let $\eta$ be the generic point of $Y$ and $\pi : Q \to Y$ a flat
quadric bundle of relative dimension $\geq 1$ (the case of relative
dimension 0 is easy).  Restriction to the generic fiber of $\pi$ gives
rise to a complex
\begin{equation}
\label{eq:minimal}
0 \to \Pic(Y) \mapto{\pi\pullback} \Pic(Q) \to \Pic(Q_{\eta}) \to 0.
\end{equation}
We claim that since $Y$ is locally factorial, \eqref{eq:minimal} is
exact in the middle.  First, note that flat pullback and restriction
to the generic fiber give rise to an exact sequence of Weil divisor
groups
\begin{equation}
\label{eq:div}
0 \to Z^1(Y) \mapto{\pi\pullback} Z^1(Q) \to Z^1(Q_{\eta}) \to 0.
\end{equation}
Indeed, since $Z^1(Q_\eta) = \varinjlim Z^1(Q_U)$ (where the limit
is taken over all dense open sets $U \subset Y$ and we write $Q_U = Q
\times_Y U$) the exactness at right and center of \eqref{eq:div} then
follows from the exactness of the excision sequence
$$
Z^0(\pi\inv(Y\bslash U)) \to Z^1(Q) \to Z^1(Q_U) \to 0
$$
associated to the closed subscheme $\pi\inv(Y\bslash U) \subset Q$.
Note that the exactness at right of the excision sequence follows by
taking the closure of a prime divisor.
The exactness at left of \eqref{eq:div} follows since $\pi$ is
surjective on codimension 1 points.

Since $\pi$ is dominant and the function fields of $Q$ and $Q_\eta$
coincide, the sequence \eqref{eq:div} of Weil divisor groups induces a
sequence
$$
\Cl(Y) \mapto{\pi\pullback} \Cl(Q) \to \Cl(Q_{\eta}) \to 0
$$
of Weil divisor class groups, which is exact by a diagram chase.
We then have the following commutative diagram
$$
\xymatrix@R=14pt{
\Pic(Y) \ar@{^{(}->}[d] \ar[r]^{\pi\pullback} & \Pic(Q) \ar@{^{(}->}[d] \ar[r]
&\Pic(Q_{\eta}) \ar@{^{(}->}[d] \ar[r] & 0\\
\Cl(Y) \ar[r]^{\pi\pullback} & \Cl(Q) \ar[r] & \Cl(Q_{\eta}) \ar[r] & 0
}
$$
of abelian groups.  The vertical inclusions are equalities since $Y$
is locally factorial, cf.\ \cite[Tome~4~Cor.~21.6.10]{EGA4}.  Finally, a
diagram chase shows that \eqref{eq:minimal} is exact in the middle.

Let $(E,q,L)$ and $(E',q',L')$ be projectively similar with respect to
a line bundle $N$ and $\OO_Y$-module isomorphisms $\vp : E' \to N
\tensor E$ and $\lambda : L' \to N^{\tensor 2} \tensor L$ preserving
the quadratic forms.  Write $h = \PProj (\vp\dual) : \PP(E') \to
\PP(N\tensor E)$ for the associated $Y$-isomorphism.  There is a
natural $Y$-isomorphism $g : \PP(N \tensor E) \to \PP(E)$ satisfying
$g\pullback \OO_{\PP(E)/Y}(1) \isom \OO_{\PP(E\tensor N)/Y}(1)\tensor
g\pullback p\pullback N$, see
\cite[II~Lemma~7.9]{hartshorne:algebraic_geometry}.  Denote by $f = g
\circ h : \PP(E') \to \PP(E)$ the composition.  Then via the
isomorphism
$$
\Gamma\bigl( \PP(E'),f\pullback(
\OO_{\PP(E)/Y}(2) \tensor p\pullback L) \bigr) \to
\Gamma\bigl( \PP(E'),\OO_{\PP(E')/Y}(2)\tensor p'\pullback L' \bigr)
$$
induced by $f\pullback \OO_{\PP(E)/Y}(2) \isom \OO_{\PP(E')}(2)\tensor
(p'{}\pullback N)^{\tensor 2}$ and $p'{}\pullback \lambda\inv :
(p'{}\pullback N)^{\tensor 2}\tensor p'{}\pullback L \to p'{}\pullback
L'$, the global section $f\pullback s_q$ is taken to $s_{q'}$, hence
$f$ restricts to a $Y$-isomorphism $Q' \to Q$.

Conversely, let $f : Q' \to Q$ be a $Y$-isomorphism.  First, we will
prove that $f$ can be extended to a $Y$-isomorphism $\tilde{f} :
\PP(E') \to \PP(E)$ satisfying $\tilde{f} \circ j' = j \circ f$.  To
this end, considering the long exact sequence associated to applying
$p\pushforward$ to the short exact sequence
\begin{equation}
\label{eq:sequnce-quadric-fibration}
0 \to \OO_{\PP(E)/Y}(-1) \tensor p\pullback L\dual \mapto{s_q}
\OO_{\PP(E)/Y}(1) \to j\pushforward \OO_{Q/Y}(1) \to 0. 
\end{equation}
and keeping in mind that $R^ip\pushforward\OO_{\PP(E)/Y}(-1)=0$ for
$i=0,1$ (as $p : \PP(E) \to Y$ is a projective bundle of positive
relative dimension by assumption), we arrive at a canonical
isomorphism $p\pushforward \OO_{\PP(E)/Y}(1) \isom
\pi\pushforward\OO_{Q/Y}(1)$.  In particular, we have a canonical
identification $E\dual = \pi\pushforward\OO_{Q/Y}(1)$.  We have a
similar identification $E'{}\dual = \pi\pushforward'\OO_{Q'/Y}(1)$.

We claim that $f\pullback \OO_{Q/Y}(1) \isom \OO_{Q'/Y}(1)
\tensor \pi'{}\pullback N$ for some line bundle $N$ on $Y$.  Indeed,
over the generic fiber, we have $f\pullback \OO_{Q/Y}(1)_{\eta} =
f\pullback_\eta \OO_{Q_\eta}(1) \isom \OO_{Q'_\eta}(1)$ by the case of
smooth quadrics (as $q$ is generically (semi)regular) over a field,
cf.\ \cite[Lemma~69.2]{elman_karpenko_merkurjev}.  Then the exactness
of \eqref{eq:minimal} in the middle finishes the proof of the claim.

Finally, by the projection formula and our assumption that $\pi' : Q'
\to Y$ is of positive relative dimension, we have that $f$ induces an
$\OO_Y$-module isomorphism
$$
E{}\dual\tensor N\dual 
\isom
\pi\pushforward \OO_{Q/Y}(1) \tensor \pi\pushforward' \pi'{}\pullback
N\dual 
\isom 
\pi\pushforward f\pushforward (f\pullback\OO_{Q/Y}(1)\tensor\pi'{}\pullback N\dual) 
\isom
\pi\pushforward' \OO_{Q'/Y}(1) 
=
E'{}\dual
$$
with induced dual isomorphism $\vp : E' \to N \tensor E$.  Now define
$\tilde{f} : \PP(E') \to \PP(E)$ to be the composition of
$\PProj(\vp\dual) : \PP(E') \to \PP(N \tensor E)$ with the natural
$Y$-isomorphism $\PP(N \tensor E) \to \PP(E)$, as earlier in this
proof.  Then by the construction of $\tilde{f}$, we have that
$\tilde{f}\pullback\OO_{\PP(E)/Y}(1) \isom \OO_{\PP(E')/Y}(1)\tensor
p'{}\pullback N$ and that $j \circ f = \tilde{f} \circ j'$ (an
equality that can be checked on fibers using
\cite[Thm.~69.3]{elman_karpenko_merkurjev}).
Equivalently, there exists an isomorphism $\tilde{f}\pullback(\OO_{\PP(E)(2)/Y} \tensor p\pullback L) \isom
\OO_{\PP(E')/Y}(2) \tensor p'{}\pullback L'$ taking $f\pullback s_q$
to $s_{q'}$.  However, as $\tilde{f}\pullback (\OO_{\PP(E)(2)/Y}
\tensor p\pullback L) \isom \OO_{\PP(E')/Y}(2)\tensor
p'{}\pullback(N^{\tensor 2} \tensor L')$, we have an isomorphism
$p'{}\pullback L' \isom p'{}\pullback(N^{\tensor 2}\tensor L)$.  Upon
taking pushforward, we arrive at an isomorphism $\lambda : L' \to
N^{\tensor 2}\tensor L$.  By the construction of $\vp$ and $\lambda$,
it follows that $(\vp,\lambda)$ is a similarity $(E,q,L) \to
(E',q',L')$, proving the converse.
\end{proof}

We now recall a moduli space theoretic characterization of quadric
fibrations.  
We first recall Grothendieck's moduli characterization of projective
bundles, see \cite[II~Prop.~7.12]{hartshorne:algebraic_geometry} or
\cite[\S5.1.5(1)]{nitsure:Quot_Hilbert}.  If $E$ is a vector bundle,
the projective bundle $p: \PP(E) \to Y$ represents the moduli functor
of \linedef{line subbundles} of $E$ on the category of $Y$-schemes:
$$
u : U \to Y \quad \mapsto \quad \left\{ N \mapto{\vp} u\pullback E \right\}
$$
of equivalence classes of invertible $\OO_U$-modules $N$ and
$\OO_U$-monomorphisms $\vp : N \to u\pullback E$ with locally free
quotient, where $\vp : N \to u^* E$ is equivalent to $\vp' : N' \to
u^* E$ if and only if there exists an $\OO_U$-module isomorphism $\mu
: N' \to N$ such that $\vp' = \vp \circ \mu$.  Given a $Y$-morphism $g
: U \to \PP(E)$, pulling back the universal line subbundle
$\OO_{\PP(E)/Y}(-1) \to p\pullback E$ via $g$ gives the a line
subbundle of $u\pullback E$.

Let $(E,q,L)$ be a quadratic form and $\pi : Q \to Y$ the associated
quadric fibration.  Then $Q$ represents the moduli functor of
\linedef{isotropic} line subbundles of $E$, i.e., line subbundles on
which the quadratic form $q : E \to L$ vanishes.

\begin{theorem}
\label{thm:moduli_quadric}
The $Y$-scheme $Q$ represents the moduli functor
$$
u : U \to Y \quad \mapsto \quad \left\{ N \mapto{\vp} u^* E \; : \;
u^*q|_N = 0 \right\} 
$$
of line subbundles $\vp : N \to u\pullback E$ such that the restriction
of the quadratic form $u\pullback q : u\pullback E \to u\pullback L$
to $N$ is identically zero.
\end{theorem}
\begin{proof}
Composing a $Y$-morphism $g : U \to Q$ with the closed embedding $j :
Q \to \PP(E)$, we obtain a line subbundle $\vp : N \to \pi\pullback
E$, where $N = (j \circ g)\pullback \OO_{\PP(E)/Y}(-1) = g\pullback
\OO_{Q/Y}(-1)$.  Thus is suffices to observe that the universal line
subbundle $\OO_{Q/Y}(-1) \to \pi\pullback E$ is isotropic for the
pull-back quadratic form $\pi\pullback q : \pi\pullback E \to
\pi\pullback L$ on $Q$.
\end{proof}

Finally, we need the following generalization of the notions of
pencils, nets, and webs of quadrics.

\begin{definition}
\label{linearspan}
Let $E$ and $L$ be vector bundles on $S$ and $r : S = \PP(L\dual) \to
Y$.  To any vector bundle-valued quadratic form $(E, \bm{q}, L)$,
define the \linedef{linear span quadratic form} $(r\pullback E, q,
\OO_{S/Y}(1))$ on $S$ by the sheaf morphism $q : r\pullback E \to
\OO_{S/Y}(1)$ over $S$ associated, via adjunction, to the sheaf
morphism $\bm{q} : E \to L = r\pushforward \OO_{S/Y}(1)$ over $Y$.
One immediately checks that the linear span is a line bundle-valued
quadratic form on $S$.  The associated quadric fibration $Q \to S$ is
called the \linedef{linear span quadric fibration}.

We apply this construction in the following situation.  Let
$(E,q_i,L_i)$ for $1 \leq i \leq m$ be a finite set of line
bundle-valued quadratic forms on the same vector bundle $E$.  Letting
$L = L_1 \oplus \dotsm \oplus L_m$, then 
$$
\bm{q} = q_1 \oplus \dotsm \oplus q_m : E \to L_1 \oplus \dotsm \oplus
L_m = L
$$ 
is a vector bundle-valued quadratic form, to which we can form the
linear span quadratic form.
\end{definition}

As we are mostly interested in linear span quadratic forms in the
sequel, we will now denote by $S$ our base space.  (We keep in mind
the case where $S \to Y$ is a projective bundle.)

In most geometric applications, we will consider quadric fibrations
with ``good'' properties, which we summarize in the following definition.

\begin{definition}
\label{def:generic-fibration}
We say that a finite set of generically (semi)regular primitive
quadratic forms $(E,q_i,L_i)$ (or quadric fibrations $Q_i \to Y$) for
$1 \leq i \leq m$ is \linedef{generic} if the following properties
hold:
\begin{enumerate}
\item the images of $L\dual_i \to S^2(E\dual)$ (associated to $q_i$
via Lemma~\ref{lem:quad}\eqref{lem:quad.3}) span an $\OO_Y$-submodule
$L\dual \subset S^2(E\dual)$ of rank $m$,

\item the associated linear span quadric fibration $Q \to S$ has
simple degeneration with regular discriminant divisor,

\item the associated relative intersection $X \to Y$ of the quadric
fibrations $Q_i \subset \PP(E)$ is a relative complete intersection.
\end{enumerate}
By a \linedef{generic relative intersection of quadrics} we mean any
relative intersection $X \to Y$ of a generic set of quadric fibrations.
\end{definition}

We now mention how the regularity of the discriminant divisor is
related to the simple degeneration hypothesis.

\begin{proposition}
\label{prop:generic_remarks}
Let $S$ be a scheme smooth over an arbitrary field $k$ of
scharacteristic $\neq 2$.  Let $\pi:Q \to S$ be a flat generically regular
quadric fibration with discriminant divisor $D$.  Then $D$ is smooth
over $k$ if and only if $Q$ is smooth over $k$ and $Q\to S$ has simple
degeneration.
\end{proposition}
\begin{proof}
As $Q$ is locally of finite presentation (by construction) and flat
(by hypothesis) over $S$, and $S$ is smooth (hence locally of finite
presentation and flat, see \cite[Tome~4~Thm.~17.5.1]{EGA4}) over $k$,
we have that $Q$ is locally of finite presentation and flat over $k$,
being a composition of such morphisms.  Thus $Q$ is smooth over $k$ if
and only if $Q$ is geometrically regular, see
\cite[Tome~4~Cor.~17.5.2]{EGA4}.  To prove the smoothness of $Q$ over
$k$, it would be enough to verify that for each point $x \in S$, the
local quadric $Q \times_S \OO_{S,x}$ is a smooth scheme over $k$,
hence geometrically regular. Indeed, for any point $y$ of $Q$, the
local ring $\OO_{Q,y}$ would then be geometrically regular, as it
resides on the local quadric $Q \times_S \OO_{S,\pi(y)}$.  If $x \in S
\bslash D$, 
then $Q \times_S \OO_{S,x}$ is actually smooth over $\OO_{S,x}$.  So
we can assume that $x \in D$.

Let $Q \to S$ be defined by a quadratic form $(E,q,L)$ of rank $n$ on
$S$.  Over the residue field of the local ring $\OO_{S,x}$ at $x$, the
regular subform $\ol{q}_0$ of $q_x$ can be diagonalized (being a
quadratic form over a field of characteristic $\neq 2$) as
$\sum_{i=1}^{r} \ol{a}_i x_i^2$ for some $0 < r < n$ (since $q_x$ is
nonzero by the flatness assumption) and $\ol{a}_i \neq 0$.  By
\cite[Cor.~3.4]{baeza:semilocal_rings}, this lifts to an orthogonal
decomposition $q_{\OO_{S,x}} = q_0 + q_1$, where $q_0 = \sum_{i=1}^r
a_i x_i^2$ is a regular quadratic form of rank $r < n$ over
$\OO_{S,x}$ lifting $\ol{q}_0$, and $q_1$ is a quadratic form of rank
$n-r$ over $\OO_{S,x}$ that vanishes modulo the maximal ideal
$\maxideal_{S,x}$. As a consequence, we find that the discriminant of
$q_{\OO_{S,x}}$, which defines $D$ at $x$, is in
$\maxideal_{S,x}^{n-r}$.
Hence, if $D$ is smooth at $x$ then the discriminant of
$q_{\OO_{S,x}}$ must be contained in $\maxideal_{S,x} \bslash
\maxideal_{S,x}^2$, and hence the regular subform $q_0$ of
$q_{\OO_{S,x}}$ has rank $n-1$, i.e., $(E,q,L)$ and $Q \to S$ have
simple degeneration at $x$.

Thus we are left to prove that if $Q \to S$ has simple degeneration,
then the smoothness of $Q$ over $k$ is equivalent to the smoothness of
$D$ over $k$.  To this end, we can assume that $k$ is algebraically
closed and pass the complete local ring $\widehat{\OO}_{S,x} \isom
k[[u_1,\dotsc,u_d]]$, where $d$ is the dimension of $S$ at $x$, see
\cite[Tome~4~Prop.~17.5.3]{EGA4}.  If $\delta \in \maxideal_{S,x}$ is
a local equation of $D$ at $x$, then up to a unit in $\widehat{\OO}_{S,x}$, the discriminant of $q_1$ (which has rank 1
by hypothesis) is $\delta$.  Then we can write (noting that over
$\widehat{\OO}_{S,x}$ all units are squares)
$$
Q \times_S \Spec \widehat{\OO}_{S,x} \isom \Spec
k[[u_1,\dotsc,u_d]][x_1,\dotsc,x_n]/(x_1^2 + \dotsm + x_{n-1}^2 +
\delta x_n^2).
$$
An application of the jacobian criterion shows that $Q \times_S \Spec
\widehat{\OO}_{S,x}$ is smooth over $k$ if and only if $\delta \in
k[[u_1,\dotsc,u_n]]$ contains a nonzero linear term in the $u_i$
(i.e., $D$ is smooth at $x$) and $c$ is a unit.
\end{proof}

Note that special cases of Proposition~\ref{prop:generic_remarks} are
proved in \cite[I~Prop.~1.2(iii)]{beauvillejaco} and
\cite[Lemma~5.2]{hassett_varilly:K3}.

We remark that $D$ can have mild singularities and yet the total space
of the quadric bundle $Q$ can still be smooth, but then the quadric
bundle must have worse degeneration over the singular points of $D$,
see \cite[I~Prop.~1.2]{beauvillejaco} for example.

\subsection{Smooth sections and hyperbolic splitting}
\label{subsec:hyperbolic_algebraic}

In this section, on the way to developing a splitting principle for
isotropic subbundles, we first establish some local algebraic results
for isotropic subbundles of possibly degenerate quadratic forms.

\begin{definition}\label{regsect}
A section $s : S \to Q$ of a quadric fibration $Q \to S$ is called
\linedef{smooth} if the image of $s$ only consists of smooth points of
the fibers of $Q\to S$.  An isotropic line subbundle $N \subset E$ of
a quadratic form $(E,q,L)$ is called \linedef{smooth} if the
associated section of its quadric fibration is smooth.
\end{definition}

We relate smoothness of the base and total space with smoothness of a section.

\begin{lemma}
\label{lem:regsect}
Let $\pi : Q \to S$ be a flat morphism of schemes with a section $s$.
If $y \in S$ is a regular point such that $x=s(y) \in Q$ is regular,
then $x \in Q_y$ is a regular point of the fiber.  In particular, if
$Q$ and $S$ are smooth schemes over a field $k$, then any section of
$\pi$ is smooth.
\end{lemma}
\begin{proof}
Denoting by $j : Q_y \to Q$ the closed embedding of a fiber, we have
local homomorphisms $\pi\sheafsharp : \OO_{S,y} \to \OO_{Q,x}$ (which
is a splitting of $s\sheafsharp : \OO_{Q,x} \to \OO_{S,y}$) and
$j\sheafsharp : \OO_{Q,x} \to \OO_{Q_y,x}$ (which is surjective),
whose composition factors through the residue field
$\kappa:=\kappa(y)=\kappa(x)$.  Thus the induced sequence of
$\kappa$-vector spaces
$$
0 \to \maxideal_{S,y}/\maxideal_{S,y}^2 \mapto{\pi\sheafsharp} 
\maxideal_{Q,x}/\maxideal_{Q,x}^2 \mapto{j\sheafsharp} 
\maxideal_{Q_y,x}/\maxideal_{Q_y,x}^2 \to 0
$$
is split exact.  If $\dim_\kappa \maxideal_{S,y}/\maxideal_{S,y}^2 =
\dim\, \OO_{S,y}$ and $\dim_\kappa \maxideal_{Q,x}/\maxideal_{Q,x}^2 =
\dim\, \OO_{Q,x}$, then $\dim_\kappa
\maxideal_{Q_y,x}/\maxideal_{Q_y,x}^2 = \dim\, \OO_{Q,x} - \dim\,
\OO_{S,y} = \dim\, \OO_{Q_y,x}$ by the local fiber dimension theorem
using the flatness of $\pi$, see
\cite[Thm.~15.1]{matsumura:commutative_ring_theory}.  Hence $x$ is
regular in the fiber $Q_y$.  For the final claim, the geometric points
$\ol{y} \in S$ and $\ol{x}=s(\ol{y}) \in Q$ are regular by hypothesis,
hence $\ol{x} \in Q_{\ol{y}}$ is regular. Thus $x \in Q_y$ is a smooth point.
\end{proof}

\begin{remark}
\label{rem:smooth_section}
Assume that $S$ is a smooth scheme over a field $k$ of characteristic
$\neq 2$ and that $\pi : Q \to S$ is a flat quadric fibration with
smooth discriminant divisor $D \subset S$.  Then by
Proposition~\ref{prop:generic_remarks}, $Q$ is smooth over $k$, and
hence by Lemma~\ref{lem:regsect}, any section of $\pi$ is smooth.
\end{remark}

We have the following algebraic
reinterpretation of the smoothness of an isotropic line subbundle in terms of the bilinear radical defined in Section \ref{subsec:Quadratic_forms_and_quadric_fibrations}.

\begin{lemma}
\label{lem:smooth_section_means}
An isotropic line subbundle $N$ of $(E,q,L)$ is smooth if and only if
$R_q \cap j\pullback N = 0$.
\end{lemma}

This motivates the following definition.  An isotropic subbundle $N
\subset E$ (of any rank) of a generically (semi)regular quadratic form
$(E,q,L)$ is called \linedef{smooth} if $R_q \cap j\pullback N = 0$.

Let $g : N \to E$ be the inclusion of an isotropic subbundle.  The
\linedef{orthogonal complement} $g^{\perp} : N^{\perp} \to E$ is
defined by the exact sequence
$$
0 \to N^{\perp} \mapto{g^{\perp}} E \mapto{\psi_{q,N}} \HHom(N,L)
$$
where $\psi_{q,N}$ is the composition of $\psi_q: E \to \HHom(E,L)$
with the projection $\HHom(E,L) \to \HHom(N,L)$.  Note that if $N$ is
isotropic then $N \subset N^{\perp}$.

Given a vector bundle $N$ and a line bundle $L$ on $S$, the
\linedef{hyperbolic quadratic form} $H_L(N)$ on $S$ is defined by the
evaluation pairing $N \oplus \HHom(N,L) \to L$.  The isotropic
subbundle $N \subset H_L(N)$ is smooth and is maximal amonst isotropic
subbundles.  Such an isotropic subbundle is called a
\linedef{lagrangian}.

\begin{lemma}[Local hyperbolic splitting]
\label{lem:local_hyperbolic_splitting}
Let $(\OO,\maxideal)$ be a local ring.  Let $N$ be an isotropic direct
summand of a generically (semi)regular quadratic form $(E,q,L)$ over
$\OO$.
If $N$ is smooth then we have an orthogonal decomposition $(E,q,L)
\isom H_L(N) \perp (N^{\perp}/N,q|_{N^{\perp}/N},L)$ such that the
inclusion of the first factor fits into the following commutative
diagram
\begin{equation}
\label{eq:local_splitting_diagram}
\begin{split}
\xymatrix@R=16pt@C=30pt{
0 \ar[r]& N \ar[r]\ar[d]& H_L(N) \ar[r]\ar[d]& \HHom(N,L) \ar[r]\ar@{=}[d]& 0\\
0 \ar[r]& N^{\perp} \ar[r]^{g^{\perp}}& E \ar[r]^(.35){\psi_{q,N}}& \HHom(N,L) & \\
}
\end{split}
\end{equation}
of free $\OO$-modules, where the top sequence arises from the
inclusion of the lagrangian $N \subset H_L(N)$.
\end{lemma}
\begin{proof}
First note that since $N$ is smooth, by
Lemma~\ref{lem:smooth_section_means} we have a decomposition $\ol{E}
\isom \ol{N} \oplus R_q \oplus \ol{M}$ over the residue field
$k(\maxideal)$ of $\OO$, where $\ol{M} = \ol{E}/(\ol{N}+R_q)$.  Fixing
a generator $L=l\OO$, we identify $L=\OO$ throughout the proof.

First, suppose $N=v\OO$ has rank 1.  Since $\ol{v} \notin R_q$ there
exists $\ol{w} \in \ol{M}$ such that $b_q(\ol{v},\ol{w}) = a\in
k(\maxideal)\mult$.  Then $\ol{v}k(\maxideal) \oplus
\ol{w}k(\maxideal)$ is a regular orthogonal summand of
$(\ol{E},\ol{q})$, which can be lifted (by
\cite[I~Cor.~3.4,~Prop.~3.2\textit{ii}]{baeza:semilocal_rings}) to a
regular orthogonal summand $H=v\OO \oplus w\OO$ of $(E,q)$, for some
lift $w \in E$ of $\ol{w}$ over $\OO$.  Hence there is an orthogonal
decomposition $(E,q) \isom (H,q|_H) \perp (H^{\perp},q|_{H^{\perp}})$.
We use the following classical trick to modify this orthogonal
decomposition.  Choosing a lift $c \in \OO\mult$ of $a\inv$, then
$\{cv,w-cq(w)v\}$ forms a hyperbolic basis of $H$ with lagrangian $N$,
hence the isometric inclusion $H_L(N) \isom (H,q|_H) \subset (E,q)$
defining the middle vertical arrow in the diagram.  The commutativity
of the diagram is then immediate.  Finally, we see that $E \isom w\OO
\oplus N^{\perp}$, hence that $N^{\perp} \isom N \oplus H^\perp$ and
thus $H^\perp \isom N^{\perp}/N$.  We thus arrive at the desired
decomposition $(E,q,L) \isom H_L(N) \perp
(N^{\perp}/N,q|_{N^{\perp}/N},L)$.

Now suppose that $N$ has rank $n$ and write $N \isom N_1 \oplus N_2$
with $\rk N_1 = 1$. Then $N \cap N_1^{\perp}/N_1$ has rank $n-1$ and
its reduction to $k(\maxideal)$ does not intersect $R_q$.  Hence we
can apply induction on $\rk N$.
\end{proof}

\begin{proposition}
\label{prop:regular_isotropic}
Let $(E,q,L)$ be a generically (semi)regular quadratic form over a
scheme $S$ and $g : N \to E$ an isotropic subbundle.  If $N$ is smooth
then the sequence
$$
0 \to N^{\perp} \mapto{g^{\perp}} E \mapto{\psi_{q,N}} \HHom(N,L) \to 0
$$
is exact.  In particular, $N^{\perp}$ is locally free.
\end{proposition}
\begin{proof}
For an isotropic $\OO_S$-submodule $N \subset E$, consider the
commutative diagram of $\OO_S$-modules
$$
\xymatrix@R=16pt@C=30pt{
0\ar[r]&N^{\perp}\ar[d]\ar[r]^{g^{\perp}}&E\ar[d]\ar[r]^(.35){\psi_{q,N}}&\HHom(N,L)\ar@{=}[d]\\ 
0\ar[r]&N^{\perp}/N\ar[r]&E/N\ar[r]^(.35){\psi_q/N}&\HHom(N,L) 
}
$$
with exact rows.  Here $\psi_q/N$ is defined by $v + N \mapsto (n
\mapsto b_q(v,n))$.  Hence the surjectivity of $\psi_{q,N}$ is
equivalent to the surjectivity of $\psi_q/N$.  Both of these are local
questions.

Let $\OO$ be the local ring of a point $x$ of $S$.  Since $N$ is
smooth, we can apply Lemma \ref{lem:local_hyperbolic_splitting} to $N$
over $\OO$ and we have an orthogonal decomposition $(E,q,L)_\OO \isom
H_L(N)_\OO \perp (N^{\perp}/N,q|_{N^{\perp}/N},L)_\OO$ of quadratic
forms over $\OO$.  But then we have the following commutative diagram
of $\OO$-modules
$$
\xymatrix@R=16pt@C=30pt{
0\ar[r]&(N^{\perp}/N)_\OO\ar@{=}[d]\ar[r]&E_\OO\ar[d]\ar[r]&H_L(N)_\OO\ar[d]\ar[r]&0\\
0\ar[r]&(N^{\perp}/N)_\OO\ar[r]&(E/N)_\OO\ar[r]^(.4){\psi_{q}/N}&\HHom(N,L)_\OO&
}
$$
whose top row defines a splitting of the inclusion $H_L(N)_\OO \to
E_\OO$ from the middle column of \eqref{eq:local_splitting_diagram}.
In particular, $\psi_q/N$ is surjective over each $\OO$, hence is
surjective over $S$.
\end{proof}

\begin{definition}
Let $(E,q,L)$ be a quadratic form over a scheme and $N \to E$ be a
smooth isotropic subbundle.  Then $q|_{N^{\perp}} : N^{\perp} \to L$
vanishes on $N$, hence defines a quadratic form $q' : N^{\perp}/N \to
L$ on $E'=N^{\perp}/N$.  We call $(E',q',L)$ the \linedef{reduced
quadratic form} associated to $N$.
\end{definition}

The following is a generalization of the hyperbolic splitting
principle for regular quadratic forms (see
\cite[I~Thm.~3.6]{baeza:semilocal_rings}).

\begin{theorem}[Isotropic splitting principle] 
\label{thm:isotropic_splitting_principle}
Let $S$ be a scheme and $(E,q,L)$ be a generically (semi)regular
quadratic form over $S$.  Let $(E',q',L)$ be the reduced quadratic
form associated to a smooth isotropic subbundle $g : N \to E$.  Then
there exists a (Zariski) locally trivial affine bundle $p : V \to S$
such that $p\pullback(E,q,L) = H_{p\pullback L}(p\pullback N) \perp
p\pullback(E',q',L)$.  In particular, then there exists a Zariski open
covering of $S$ such that $(E,q,L)|_U = H_{L|_U}(N|_U) \perp
(E',q',L)|_U$ for each element $U$ of the covering.
\end{theorem}
\begin{proof}
Following Fulton~\cite[\S2]{fulton:flag_bundles}, let $V' \subset
\HHom(E,N)$ be the sheaf of local retractions of $g : N \to E$, i.e.,
whose sections over $U \to S$ are $\OO_U$-module morphisms $\varphi :
E|_{U} \to N|_{U}$ such that $\varphi \circ g|_{U} = \id_{N}|_U$.
Then $V'$ has a (left) translation action by the $\OO_S$-module
$\HHom(\coker(g),N) \subset \HHom(E,N)$, making $V'$ into a (left)
torsor.  By abuse of notation, denote by $p':V' \to S$ the
corresponding Zariski locally trivial affine bundle, which has a
section if and only if $g$ is split over $S$.  The pullback
$p'\pullback E$ has a tautological map to $p'\pullback N$, which is a
retraction of $p'\pullback g$, hence $p'\pullback N$ is a direct
summand of $p'\pullback E$.

By Proposition~\ref{prop:regular_isotropic}, $g^{\perp} : N^{\perp}
\to E$ is locally free with locally free quotient.  Let $V \subset
\HHom(p'\pullback E,p'\pullback N^{\perp})$ be the sheaf of local
retractions of $g^{\perp}$ over $V'$.  Denote by $p: V \to S$ the
corresponding composition of Zariski locally trivial affine bundles.
Then $p\pullback N^{\perp}$ is a direct summand of $p\pullback E$.
Considering the Nine Lemma applied to the commutative diagram
$$
\xymatrix@R=16pt{
0 \ar[r] & N \ar@{=}[d] \ar[r] & N^{\perp} \ar[d] \ar[r] &
N^{\perp}/N \ar[d] \ar[r] & 0 \\
0 \ar[r] & N \ar[r]^g & E \ar[r] & E/N \ar[r] & 0 \\
}
$$
of $\OO_S$-modules, we have (using Proposition~\ref{prop:regular_isotropic})
an exact sequence
$$
0 \to N^{\perp}/N \to E/N \to \HHom(N,L) \to 0,
$$
which becomes split upon application of $p\pullback$.  Note that
$p\pullback N^{\perp}=(p\pullback N)^{\perp}$ (which can be checked
locally).  Finally, the direct sum decomposition $p\pullback E \isom
p\pullback \bigl( N \oplus \HHom(N,L)\bigr) \oplus p\pullback
(N^{\perp}/N)$, induces the required isometry of quadratic forms, as
in the affine case \cite[I~Thm.~3.6]{baeza:semilocal_rings} or
\cite[I~Prop.~3.7.1]{knus:quadratic_hermitian_forms}.  Indeed, the
associated bilinear form induces a pairing $b_q : N \times E/N^{\perp}
\to L$, which is locally checked to be a perfect pairing.  By
Proposition~\ref{prop:regular_isotropic}, this is a perfect pairing $N
\times \HHom(N,L) \to L$, which remains perfect after pulling back to
$V$.  If $p\pullback\! \HHom(N,L)$ is isotropic in $p\pullback E$,
then the pairing pulls back to the associated bilinear form of the
hyperbolic quadratic form $p\pullback H_L(N)$ and we are done.
Otherwise, we employ the same trick appearing in the proof of
Lemma~\ref{lem:local_hyperbolic_splitting} to modify the orthogonal
decomposition of $p\pullback E$, in order to make the summand
$p\pullback\bigl( N \oplus \HHom(N,L) \bigr)$ hyperbolic.
The final claim follows since $p$ has a section Zariski locally.
\end{proof}

\begin{corollary}
\label{cor:degeneration_reduction}
The quadratic forms $(E,q,L)$ and $(E',q',L)$ have the same
degeneration divisor. In particular, $(E,q,L)$ has simple degeneration
if and only if $(E',q',L)$ does.
\end{corollary}
\begin{proof}
The degeneration divisors $D$ and $D'$ of $(E,q,L)$ and $(E',q',L)$
agree over a Zariski open covering of $S$ by
Theorem~\ref{thm:isotropic_splitting_principle} since hyperbolic
spaces have trivial discriminant.  Hence $D=D'$.  The final statement
is local and follows from Lemma~\ref{lem:local_hyperbolic_splitting}.
\end{proof}

\subsection{Quadric reduction by hyperbolic splitting}
\label{subsec:hyperbolic_geometric}

We now describe a geometric interpretation, in terms of quadratic
fibrations, of the reduced quadratic form associated to a smooth
isotropic line subbundle.  Given a quadric hypersurface $Q \subset
\PP^{n-1}$ over an arbitrary field $k$ and a smooth $k$-point $x$,
consider the embedded tangent space $T_x Q$.  The intersection $T_x Q
\cap Q$ is a quadric cone with vertex $x$, over a quadric $Q' \subset
\PP^{n-2}$.  In particular, projecting $T_x Q \cap Q$ away from $x$, we
find $Q'$ as a quadric hypersurface in the target $\PP^{n-3}$, which
we call the \linedef{quadric reduction by hyperbolic splitting}
associated to $Q$ and $x$.  Recently, this construction was considered
\cite{ingalls-khalid} in the study of nets of quadrics over $\PP^2$
and associated K3 surfaces.

Conversely, consider a quadric $Q'\subset \PP^{n-3}$ and an embedding
of $\PP^{n-3}$ as a hyperplane of $\PP^{n-2}$.  Let $\varepsilon:
\widetilde{\PP}^{n-2} \to \PP^{n-2}$ be the blow-up of $\PP^{n-2}$
along $Q'$. Then the quadric $Q$ is obtained as the image of
$\widetilde{\PP}^{n-2}$ when contracting the proper transform of
$\PP^{n-3}$.

\begin{proposition}
Let $N$ be a smooth isotropic line subbundle of a primitive quadratic
form $(E,q,L)$ over a scheme $S$.  Let $Q$ and $Q'$ be the quadric
fibrations associated to $(E,q,L)$ and the reduced quadratic form
$(E',q',L)$ associated to $N$. Let $s : S \to Q$ be the smooth section
associated to $N$.  Then the fiber $Q'_x$ is the quadric reduction of
$Q_x$ associated to the smooth point $s(x)$.
\end{proposition}
\begin{proof}
Since $s(x)$ is a smooth point of $Q_x$, the projective space
$\PP(N_x^{\perp})$ is canonically identified with the embedded tangent
space of $Q_x$ at $s(x)$, see
\cite[Prop.~22.1]{elman_karpenko_merkurjev}.  Moreover, the quotient
map $N_x^{\perp} \to N_x^{\perp}/ N_x$ corresponds to the projection
away from $s(x)$.  Hence it is immediate that the reduced quadratic
form $q'_x$ on $N_x^{\perp}/N_x$, induced by $q_x$ and $N_x$, defines
the quadric $Q'_x$ on $\PP(N_x^{\perp}/N_x)$.
\end{proof}

\subsection{Even Clifford algebra and Clifford bimodule}
\label{subsec:Even_Clifford_algebra}

In this section, we give a new tensorial construction of the even
Clifford algebra of a line bundle-valued quadratic form.  In
Appendix~\ref{appendix_clifford}, we show that it coincides with the
one in \cite[\S3.3]{kuznetsov:quadrics}.  The usefulness of this
construction lies in its universal property, cf.\ \cite[\S1]{auel:Brdim}.

Let $(E,q,L)$ be a (line bundle-valued) quadratic form of rank $n$ on
a scheme $S$.  Write $n=2m$ or $n=2m+1$. Inspired
by~\cite[II~Lemma~8.1]{book_of_involutions}, we define ideals $J_1$
and $J_2$ of the tensor algebra $T(E\tensor E\tensor L\dual)$,
generated by
$$
v\tensor v \tensor f - f(q(v)), \quad \mbox{and} \quad 
u\tensor v \tensor f \tensor v \tensor w \tensor g - 
f(q(v)) \, u \tensor w \tensor g,
$$
respectively, for sections $u,v,w$ of $E$ and $f,g$ of $L\dual$.  We
define the \linedef{even Clifford algebra} of $(E,q,L)$ as the
quotient algebra
$$
\CliffAlg_0(E,q,L) = T(E\tensor E\tensor L\dual)/(J_1 + J_2).
$$
Then $\CliffAlg_0(E,q,L)$ has an $\OO_S$-module filtration
$$
\OO_S = F_0 \subset F_2 \subset \dotsm \subset F_{2m} =\CliffAlg_0(E,q,L),
$$
where $F_{2i}$ is the image of the truncation of the tensor algebra
$T^{\leq i}(E \tensor E \tensor L\dual)$ in $\CliffAlg_0(E,q,L)$, for
each $0 \leq i \leq m$.  The fact that $F_{2m}=\CliffAlg_0(E,q,L)$ is
a consequence of the Poincar\'e--Birkhoff--Witt theorem, see
\cite[Prop.~3.5]{bichsel_knus:values_line_bundles}.  This filtration
has associated graded pieces $F_{2i}/F_{2(i-1)} \isom \exterior^{2i} E
\tensor (L\dual)^{\tensor i}$.
In particular, $\CliffAlg_0(E,q,L)$ is a locally free $\OO_S$-algebra
of rank $2^{n-1}$.  By its tensorial construction, the even Clifford
algebra has the following universal property:\ given an
$\OO_S$-algebra $\ka$ and an $\OO_S$-module morphism $\psi : E \tensor
E \tensor L\dual \to \ka$ satisfying
\begin{equation}
\label{eq:univ}
\psi(v\tensor v \tensor f) = f(q(v))\cdot 1_{\ka} \quad \mbox{and} \quad 
\psi(u\tensor v \tensor f) \cdot \psi(v \tensor w \tensor g) = 
f(q(v)) \, \psi(u \tensor w \tensor g),
\end{equation}
there exists a unique $\OO_S$-algebra homomorphism $\Psi :
\CliffAlg_0(E,q,L) \to \ka$ satisfying $\psi = \Psi \circ i$, where $i
: E \tensor E\tensor L\dual \to \CliffAlg_0(E,q,L)$ is the canonical
morphism.

When $L=\OO_S$, and $(E,q)$ is a classical ($\OO_S$-valued) quadratic
form, then we can define the ``full'' \linedef{Clifford algebra} as
\begin{equation}
\label{eq:full_clifford}
\CliffAlg(E,q) = T(E)/J
\end{equation}
where $J$ is the ideal generated by $v\tensor v - q(v)$ for sections
$v$ of $E$.  As $J$ is generated by relations in even degree, the full
Clifford algebra inherits a $\Z/2\Z$-graded structure $\CliffAlg(E,q)
= \CliffAlg_0(E,q) \oplus \CliffAlg_1(E,q)$ into components of even
and odd degree.  The canonical morphism $E \tensor E \tensor \OO_S = E
\tensor E \to \CliffAlg_0(E,q)$ satisfies the universal property
\eqref{eq:univ}.  The induced $\OO_S$-algebra morphism
$\CliffAlg_0(E,q,\OO_S) \to \CliffAlg_0(E,q)$ is an isomorphism.  If
$L \neq \OO_S$, then quadratic forms with values in $L$ do not
generally enjoy a ``full'' Clifford algebra, of which the even
Clifford algebra is the even degree part.  In this case, the Clifford
bimodule is a replacement for the odd degree part.

Now we define the Clifford bimodule.  Inspired by
\cite[II~\S9]{book_of_involutions}, we proceed as follows.  The
$\OO_S$-module $E \tensor T(E \tensor E \tensor L\dual)$ has a natural
right $T(E\tensor E \tensor L\dual)$-module structure denoted by
$\tensor$; we now define a commuting left $T(E \tensor E \tensor
L\dual)$-module structure denoted by $*$ and defined by
$$
(v_1\tensor u_1\tensor f_1 \tensor \dotsm \tensor v_r \tensor u_r
\tensor f_r) * w = 
v_1\tensor (u_1\tensor v_2 \tensor f_1 \tensor u_2 \tensor v_3 \tensor f_2
\tensor \dotsm \tensor u_r \tensor w \tensor f_r),
$$
for sections $w, u_i, v_i$ of $E$ and $f_i$ of $L\dual$.  We define
the \linedef{Clifford bimodule} of $(E,q,L)$ as the quotient module
$$
\CliffAlg_1(E,q,L) = E \tensor T(E\tensor E \tensor L\dual)/(E\tensor
J_1 + J_1 * E).
$$
One immediately checks that $E \tensor J_2 \subset J_1 * E$ and $J_2 *
E \subset E \tensor J_1$, hence $\CliffAlg_1(E,q,L)$ inherits a
$\CliffAlg_0(E,q,L)$-bimodule structure.  Also, $\CliffAlg_1(E,q,L)$
has an $\OO_S$-module filtration
$$
E = F_1 \subset F_3 \subset \dotsm \subset F_{2m+1} =\CliffAlg_1(E,q,L),
$$
where $F_{2i+1}$ is the image of the truncation $E \tensor T^{\leq
i}(E \tensor E \tensor L\dual)$ in $\CliffAlg_1(E,q,L)$, for each $0
\leq i \leq m$.  This filtration has associated graded pieces
$F_{2i+1}/F_{2i-1} \isom \exterior^{2i+1} E \tensor (L\dual)^{\tensor
i}$.  In particular, $\CliffAlg_1(E,q,L)$ is a locally free
$\OO_S$-module of rank $2^{n-1}$.

For a list of basic functorial properties of the even Clifford algebra
and Clifford bimodule, see Appendix~\ref{appendix_clifford}.  In
particular,
Proposition~\ref{prop:properties}~\eqref{C_0-proj_similarity} shows
that the $\OO_S$-algebra isomorphism class $\kc_0(E,q,L)$ is an
invariant of the projective similarity class of $(E,q,L)$, hence by
Proposition \ref{prop:proj_sim_quadric}, of the associated quadric
fibration $\pi : Q \to S$.  We can thus speak of \emph{the} even
Clifford algebra (defined up to isomorphism) of a quadric fibration.

\subsection{The discriminant cover:\ even rank case}
\label{subsec:clifford_discriminant}

Let $(E,q,L)$ be a generically regular quadratic form of even rank on
a scheme $S$ and $\CliffZ = \CliffZ(E,q,L)$ the center of
$\CliffAlg_0=\CliffAlg_0(E,q,L)$. We call the associated affine
morphism $f : T = \SSpec \CliffZ \to S$ the \linedef{discriminant
cover}.

\begin{lemma}
\label{lem:disc_flat}
Let $(E,q,L)$ be a generically regular quadratic form of even rank
over a locally factorial integral scheme. Then $\CliffZ$ is a locally
free $\OO_S$-algebra of rank 2, hence $f : T \to S$ is a finite flat
morphism of degree 2.
\end{lemma}
\begin{proof}
As pointed out before
\cite[IV~Prop.~4.8.3]{knus:quadratic_hermitian_forms}, the proof of
the statement in the affine case
\cite[IV~Prop.~4.8.1]{knus:quadratic_hermitian_forms} goes through
over any factorial domain.
\end{proof}

We denote by $\CliffB_0 = \CliffB_0(E,q,L)$ the $\OO_T$-algebra
associated to the $\CliffZ$-algebra $\CliffAlg_0$.  By
Proposition~\ref{prop:properties}~\eqref{C_0-center}, $\CliffZ$ is 
\'etale over every point of $S$ where $(E,q,L)$ is regular.
By Proposition~\ref{prop:properties}~\eqref{C_0-Azumaya}, $\CliffB_0$
is Azumaya over every point of $T$ laying over a point of
$S$ where $(E,q,L)$ is regular.  The following generalization will be
extremely important in the sequel (cf.\ \cite[Sect. 3.5]{kuznetsov:quadrics}).

\begin{proposition}
\label{prop:simple_azumaya}
Let $(E,q,L)$ be a generically regular quadratic form of even rank on
a locally factorial integral scheme $S$. Let $f : T \to S$ be the
discriminant cover.  Then $\CliffB_0$ is an Azumaya algebra over every
point of $T$ lying over a point of $S$ where $(E,q,L)$ has simple
degeneration.  In particular, if $(E,q,L)$ has simple degeneration on
$S$, then $\CliffB_0$ is an Azumaya algebra on $T$.
\end{proposition}
\begin{proof}
Let $(\OO,\maxideal)$ be the local ring of a point of $S$ where
$(E,q,L)$ has simple degeneration.  The center $\CliffZ_\OO$ of
$\CliffAlg_0$ restricted to $\OO$, is a free $\OO$-algebra of rank 2
by Lemma~\ref{lem:disc_flat}.  Suppressing the choice of a
trivialization of $L$ over $\OO$, we write $(E,q)_\OO = (E_1,q_1)
\perp (\OO,\quadform{\pi})$ for $q_1$ a semiregular quadratic form of
odd rank over $\OO$ and $\pi \in \OO$.  We claim that the map $\psi :
\CliffAlg_0(E_1,q_1) \tensor_{\OO} \CliffZ_\OO \to \CliffAlg_0$,
defined by the canonical monomorphism $\CliffAlg_0(E_1,q_1) \to
\CliffAlg_0$ (and multiplication in $\CliffAlg_0$), is a
$\CliffZ_\OO$-algebra isomorphism.  As $(E_1,q_1)$ is semiregular of
odd rank, $\CliffAlg_0(E_1,q_1)$ is an Azumaya algebra on $S$ (cf.\
Proposition~\ref{prop:properties}(\ref{C_0-center})), hence remains
Azumaya upon tensoring by $\CliffZ_\OO$.  In particular, $\psi$ is a
monomorphism.  By
\cite[IV~Prop.~7.3.1]{knus:quadratic_hermitian_forms}, $\psi$ is
surjective upon identifying $\CliffZ \isom
\CliffAlg(\CliffZ_1(E_1,q_1) \tensor \quadform{-\pi})$, where
$\CliffZ_1(E_1,q_1) = \CliffZ(E_1,q_1) \cap \CliffAlg_1(E_1,q_1)$ is
an invertible $\OO$-module.  This shows that $\CliffB$ is a locally
free $\OO_T$-algebra of constant rank that is Zariski locally Azumaya,
hence is Azumaya.  A different proof can be found in
\cite[Prop.~3.13]{kuznetsov:quadrics}, cf.\
\cite[Prop.~1.13]{auel_parimala_suresh}.
\end{proof}

\subsection{The discriminant stack:\ odd rank case}
\label{subsec:clifford_discriminant_odd}

Let $(E,q,L)$ be a generically regular quadratic form of odd rank
$n=2m+1$ on $S$.  We further assume that 2 is invertible on $S$.
Recall that the discriminant divisor $D \subset S$ is the zero locus
of the global section $\disc(q) \in \Gamma(Y,(\det E\dual)^{\tensor 2}
\tensor L^{\tensor n})$.  We define the \linedef{discriminant stack}
to be the square root stack $\stack{T}$ associated to the pair $((\det
E\dual)^{\tensor 2} \tensor L^{\tensor n}, \disc(q))$ over $S$, cf.\
\cite[\S2.2]{cadman}.  As 2 is invertible, $\stack{T}$ is a
Deligne--Mumford stack (see \cite[Thm.~2.3.3]{cadman}) and has coarse
moduli space $f : \stack{T} \to S$ (see \cite[Cor.~2.3.7]{cadman}).
Furthermore, if $S$ and $D$ are smooth over a field $k$, then
$\stack{T}$ is smooth (see \cite[Ex.~2.4.5]{cadman}).

We have the following results analogous to Lemma~\ref{lem:disc_flat}
and Proposition~\ref{prop:simple_azumaya} (cf.\
\cite[Sect. 3.6]{kuznetsov:quadrics}).  Recall the construction of the
``full'' Clifford algebra \eqref{eq:full_clifford} in the case where
$L=\OO_S$.

\begin{lemma}
\label{lem:center_full_odd}
Let $(E,q)$ be a generically regular quadratic form (with values in
$\OO_S$) of odd rank over a locally factorial integral scheme. Then
the center $\CliffZ$ of the ``full'' Clifford algebra is a locally
free $\OO_S$-algebra of rank 2.
\end{lemma}
\begin{proof}
See the proof of Lemma~\ref{lem:disc_flat}.
\end{proof}

\begin{proposition}
\label{prop:simple_azumaya_odd}
Let $(E,q,L)$ be a generically regular quadratic form of odd rank
$n=2m+1$ on a locally factorial integral scheme $S$ with 2 invertible.
Let $f : \stack{T} \to S$ be the discriminant stack.  Then there
exists a locally free algebra $\CliffB_0$ on $\stack{T}$ such that
$f\pushforward \CliffB_0 = \CliffAlg_0$, and which is Azumaya over
every point of $\stack{T}$ lying over a point of $S$ where $(E,q,L)$
has simple degeneration.  In particular, if $(E,q,L)$ has simple
degeneration on $S$, then $\CliffB_0$ is an Azumaya algebra on
$\stack{T}$.
\end{proposition}
\begin{proof}
We first give a local description of the discriminant stack (cf.\
\cite[\S3.6]{kuznetsov:quadrics}).  Let $U \subset S$ be a Zariski
affine open trivializing $E$ and also $L$ via $l : \OO_{U} \to
L|_{U}$.  Then there is an $\OO_{U}$-valued quadratic form
$(E|_U,l\inv q|_U)$ on $U$.  Hence we can consider the classical
``full'' Clifford algebra $\CliffAlg_U=\CliffAlg(E|_U,l\inv q|_U)$,
and $\CliffZ_U$ its center.  By Lemma~\ref{lem:center_full_odd},
$\CliffZ_U$ is a locally free $\OO_{U}$-algebra of rank 2.  As $S$ is
integral, we have $\CliffZ_U \isom \OO_U[z]/(z^2-d)$, where $d =
l^{-n}\disc(q)|_U$ by
\cite[IV~Props.~4.8.5(1),~4.8.9(1)]{knus:quadratic_hermitian_forms}.
Then $T_U = \SSpec \CliffZ_U \to U$ is a finite flat double cover
branched along $D \cap U$.  By the local description of the root stack
(cf.\ \cite[Ex.~2.4.1]{cadman}), we have that $\stack{T}|_U \isom
[T_U/\mu_2]$, where $\mu_2$ acts $\OO_U$-linearly on $T_U$ via the
trivial character on $1_{\CliffZ_U}$ and the standard character on
$z$.  As 2 is invertible, this coincides with the action of the Galois
group $\Z/2\Z \isom \mu_2$ of $T_U$ over $U$.

We now construct the $\OO_{\stack{T}}$-algebra $\CliffB_0$.  For a
Zariski affine open $U \subset S$, let $T_U \to U$ and $\CliffAlg_U$
as above, and denote by $\CliffB_U$ the $\OO_{T_U}$-algebra associated
to $\CliffAlg$.  Negation on $E$, when restricted to $U$, induces an
$\OO_U$-automorphism of $\CliffAlg_U$ of order 2, which restricts to
the Galois automorphism of $\CliffZ_U$ over $\OO_U$ (since $\CliffZ_U$
is generated by an element $z$ in odd degree).  Hence $\CliffB_U$ is
naturally a $\Z/2\Z\isom \mu_2$-equivariant $\OO_{T_U}$-algebra for
the Galois action on $T_U$ over $U$, hence defines an
$\OO_{[T_U/\mu_2]}$-algebra.  Since the equivariant structure over $U$
is induced by a global automorphism of $E$ over $S$, this construction
glues over a Zariski affine open cover to yield an
$\OO_{\stack{T}}$-algebra $\CliffB_0$.

To check where $\CliffB_0$ is Azumaya, we work locally.  Let
$(\OO,\maxideal)$ be the local ring of a point of $S$ where $(E,q,L)$
has simple degeneration and let $l : \OO \to L_\OO$ be a
trivialization (if $(E,q,L)$ is regular at $\OO$, then the argument
simplifies).  We can write $(E_{\OO},l\inv q_{\OO}) = (E_1,q_1) \perp
(\OO,\quadform{\pi})$ for $q_1$ a regular quadratic form of even rank
over $\OO$ and $\pi \in \OO$.  The center $\CliffZ_\OO$ of the full
Clifford algebra $\CliffAlg_\OO = \CliffAlg(E_\OO,l\inv q_\OO)$ is a
free $\OO$-algebra of rank 2 by Lemma~\ref{lem:center_full_odd}.  We
claim that the map $\psi : \CliffAlg(E_1,q_1) \tensor_{\OO}
\CliffZ_\OO \to \CliffAlg_\OO$, defined by the canonical monomorphism
$\CliffAlg(E_1,q_1) \to \CliffAlg_\OO$ and multiplication in
$\CliffAlg_\OO$, is a $\CliffZ_\OO$-algebra isomorphism.  As
$(E_1,q_1)$ is regular of even rank, $\CliffAlg(E_1,q_1)$ is an
Azumaya $\OO$-algebra (cf.\
Proposition~\ref{prop:properties}(\ref{C_0-center})), hence remains
Azumaya upon tensoring with $\CliffZ_\OO$.  In particular, $\psi$ is a
monomorphism.  By
\cite[IV~Prop.~7.2.1]{knus:quadratic_hermitian_forms}, $\psi$ is
surjective upon identifying $\CliffZ_\OO \isom \CliffAlg(\CliffZ^0_\OO
\tensor \quadform{\pi})$, where $\CliffZ_\OO^0 = \ker(\tr :
\CliffZ(E_1,q_1) \to \OO)$ coincides with the submodule generated by
$z$ as above.  This shows that $\CliffB_0$ is glued from
$\mu_2$-equivariant Azumaya algebras of constant rank, hence defines
an Azumaya $\OO_{\stack{T}}$-algebra.  A different proof can be found
in \cite[Prop.~1.15]{kuznetsov:quadrics}.
\end{proof}

\subsection{A Morita equivalence}
\label{subsec:Morita_equivalence}

The main result of this section is Theorem~\ref{thm:3}.  Leading up to
its proof, we provide some useful formulas for the even Clifford
algebra of a (line bundle-valued) quadratic form.  See
\cite[IV~Thm.~1.3.1]{knus:quadratic_hermitian_forms} for analogous
formulas when the value line bundle is trivial.

\begin{lemma}
\label{lem:even_clifford_perp}
Let $(E_1,q_1,L)$ and $(E_2,q_2,L)$ be quadratic forms over a scheme
$S$.  Then there is an isomorphism of $\OO_S$-algebras
$$
\CliffAlg_0(q_1 \perp q_2) \isom
\CliffAlg_0(q_1)\tensor\CliffAlg_0(q_2) \oplus
\CliffAlg_1(q_1)\tensor\CliffAlg_1(q_2)\tensor L\dual
$$
where the algebra structure on the right is given by multiplication in
$\kc_0$, by the module action of $\kc_0$ on $\kc_1$, and by the
multiplication map in Proposition
\ref{prop:properties_C_1}~\eqref{C_1-C_0-module}.
\end{lemma}
\begin{proof}
See \cite[Eq.~9]{auel:Brdim} for a proof making use of the universal
property \eqref{eq:univ}.
\end{proof}

\begin{proposition}
\label{prop:split_hyperbolic_case}
Let $(E,q,L) \isom H_L(N) \perp (E',q',L)$ be an orthogonal
decomposition of quadratic forms on a scheme $S$, with $N$ a line
bundle and $(E',q',L')$ primitive.  Then we have an $\OO_S$-algebra
isomorphism
$$
\CliffAlg_0(q) \isom
\EEnd_{\CliffAlg_0(q')}(P),
$$
where $P = \CliffAlg_0(q')\oplus N\dual \tensor \CliffAlg_1(q')$ is a
locally free right $\CliffAlg_0(q')$-module of rank 2.  In particular,
$\CliffAlg_0(q)$ is Morita $S$-equivalent to
$\CliffAlg_0(q')$. Furthermore, assuming that $S$ is integral and
locally factorial and that $E$ has even rank, then there is an
$\OO_S$-algebra isomorphism $\CliffZ(q)\isom \CliffZ(q')$ of centers,
with respect to which the induced left and right actions of the
centers on $P$ commute.
\end{proposition}
\begin{proof}
By Lemma~\ref{lem:even_clifford_perp}, we have 
$$
\CliffAlg_0(q) \isom
(\OO_S \oplus \OO_S)\tensor\CliffAlg_0(q') \oplus (N\oplus
\HHom(N,L))\tensor \CliffAlg_1(q')\tensor L\dual.
$$
Then we have to trace through the multiplication to show that we can
write this algebra in a block matrix decomposition
$$
\left(
\begin{array}{cc}
\CliffAlg_0(q') & N \tensor \CliffAlg_1(q') \tensor L\dual \\
N\dual \tensor \CliffAlg_1(q') & \CliffAlg_0(q')
\end{array}
\right)
$$
using the fact that
$$
\HHom_{\CliffAlg_0(q')}\bigl(N\dual \tensor
\CliffAlg_1(q'),\CliffAlg_0(q')\bigr) \isom 
N \tensor\HHom_{\CliffAlg_0(q')}\bigl( \CliffAlg_1(q'),\CliffAlg_0(q')\bigr)\isom 
N \tensor \CliffAlg_1(q') \tensor L\dual.
$$
In this last step, we needed the fact that the multiplication map from
Proposition~\ref{prop:properties_C_1}\eqref{C_1-C_0-module} yields an
isomorphism
$$
\CliffAlg_1(q') \tensor_{\CliffAlg_0(q')} \CliffAlg_1(q') \to
\CliffAlg_0(q') \tensor L
$$
of $\CliffAlg_0(q')$-bimodules (since $q'$ is assumed primitive).
For the final statement, restricting the isomorphism of
Lemma~\ref{lem:even_clifford_perp} to the center yields an
$\OO_S$-algebra monomorphism $\psi: \CliffZ(q) \to \CliffZ(H_L(N))
\tensor \CliffZ(q')$.  Note that $\CliffZ(H_L(N)) \isom \OO_S \times
\OO_S$.  Considering the Galois automorphisms $\iota$ of
$\CliffZ(H_L(N))$ and $\iota'$ of $\CliffZ(q')$ over $\OO_S$, then as
in the proof of \cite[Prop.~2.3]{auel:Brdim}, we compute that
$(1-\iota\tensor \iota')$ annihilates the image of $\psi$.  Hence we
arrive at an $\OO_S$-monomorphism $\CliffZ(q) \to \CliffZ(q')$, which
is an isomorphism over $S\bslash D$ by
\cite[IV~\S4.4]{knus:quadratic_hermitian_forms} and over $D$ by
Corollary~\ref{cor:degeneration_reduction}, where $D$ is the
discriminant divisor of $q$.
\end{proof}

Let $S$ be a locally ringed topos.  We say that $\OO_S$-algebras $\ka$
and $\ka'$ are \linedef{Morita $S$-equivalent} if the fibered
categories of finitely presented (right) modules $\COH(S,\ka)$ and
$\COH(S,\ka')$ are $S$-equivalent, i.e., inducing $\OO_S$-module
isomorphisms on $\HHom$ sheaves, see
Lieblich~\cite[\S2.1.4]{lieblich:thesis} or Kashiwara--Schapira
\cite[\S19.5]{kashiwara_schapira:categories_sheaves}.  First recall
``Morita theory'' in the context of locally ringed topoi, which is
proved in \cite[Prop.~2.1.4.4]{lieblich:thesis} and
\cite[Thm.~19.5.4]{kashiwara_schapira:categories_sheaves}.

\begin{theorem}[Morita theory]
\label{thm:morita}
Let $S$ be a locally ringed topos and $\ka$ and $\ka'$ be
$\OO_S$-algebras.  Then:
\begin{enumerate}
\item[I.] Any invertible $\ka'\tensor_{\OO_S}\ka\op$-module $\kp$ gives
rise to Morita $S$-equivalences
$$
\kp \tensor_{\ka}- : \COH(S,\ka) \to \COH(S,\ka'), \qquad 
\HHom_{\ka'}(\kp,-) : \COH(S,\ka') \to \COH(S,\ka).
$$
\item[II.] Any Morita $S$-equivalence $F : \COH(S,\ka) \to \COH(S,\ka')$
is isomorphic to $\kp \tensor_{\ka}-$ for some invertible
$\ka'\tensor_{\OO_S}\ka\op$-module $\kp$.
\end{enumerate}
\end{theorem}

We also recall that if $\ka$ and $\ka'$ are Azumaya algebras on $S$
then Morita $S$-equivalence is equivalent to Brauer equivalence, cf.\ \cite[Prop.~2.1.5.6]{lieblich:thesis}.

Let $T \to S$ be a morphism of locally ringed topoi and fix a
$\Gm$-gerbe $\gerbe{Y}\to T$ (e.g., the gerbe of splittings of an
fixed Azumaya algebra).  Given a sheaf $\kv$ on $\gerbe{Y}$, denote by
$\kv \times \Gm \to \kv$ the natural inertial action.  We recall that
$\gerbe{Y}$ defines a class in $H^2(T,\Gm)$, see
\cite[IV~\S3]{giraud}.  The Brauer group of Azumaya algebras $\Br(T)$
naturally injects into the torsion subgroup of $H^2(T,\Gm)$, see
\cite[I~\S2]{grothendieck:Brauer}.

\begin{definition}
A \linedef{$\gerbe{Y}$-twisted sheaf} on $T$ is an
$\ko_{\gerbe{Y}}$-module $\kv$ such that the inertial action $\kv
\times \Gm \to \kv$ is equal to the right action associated to the
left module action $\Gm \times \kv \to \kv$.
\end{definition}

\begin{proposition} 
\label{approximation} 
Let $T$ be an integral noetherian Artin stack with generic point
$\eta$.  Let $\gerbe{Y} \to T$ be a $\Gm$-gerbe.  Then:
\begin{enumerate}
\item Any quasi-coherent $\ko_{\gerbe{Y}}$-module is the colimit of its
coherent $\ko_{\gerbe{Y}}$-submodules.
\item Any coherent $\gerbe{Y}$-twisted sheaf on $\eta$ extends to a coherent
$\gerbe{Y}$-twisted sheaf on $T$.
\end{enumerate}
\end{proposition}
\begin{proof}
The first statement follows from
\cite[Prop.~15.4]{champs-algebriques}, as any $\Gm$-gerbe over an
Artin stack is itself an Artin stack, cf.\
\cite[2.2.1.5]{lieblich:moduli_twisted_sheaves}.  The second statement
immediately follows from the first.
\end{proof}

We need the following generalization of
\cite{auslander_goldman:brauer_group_commutative_ring} or
\cite[II~Cor.~1.10]{grothendieck:Brauer} to the setting of stacks.

\begin{proposition}
\label{prop:generic_injectivity}
Let $T$ be an regular integral noetherian Artin stack with generic
point $\eta$.  Then the restriction map $\Br(T) \to \Br(\eta)$ is
injective.
\end{proposition}
\begin{proof}
The proof in \cite[Prop.~3.1.3.3]{lieblich:period-index} (with the
hypothesis that $T$ is an algebraic space) immediately generalizes to
any noetherian Artin stack by appealing to
Proposition~\ref{approximation}.
\end{proof}

This can be interpreted as saying that two Azumaya algebras on a
noetherian Artin stack that are Zariski locally Brauer equivalent are
actually Brauer equivalent. Finally, we can state the main result of
this section, of which Theorem~\ref{thm:3} is a special case.

\begin{theorem}
\label{thm:hyperbolic_splitting_morita}
Let $(E,q,L)$ be a quadratic form over a regular integral scheme $S$,
with simple degeneration along a regular divisor $D$.  In the case of
odd rank, assume that 2 is invertible on $S$.  Let $(E',q',L')$ be the
reduced quadratic form associated to a smooth isotropic subbundle $N
\to E$.
Then the even Clifford algebras $\CliffAlg_0(E,q,L)$ and
$\CliffAlg_0(E',q',L)$ are Morita $S$-equivalent.  Furthermore, the
associated Azumaya algebras $\CliffB_0$ and $\CliffB_0'$ on the
discriminant cover or stack are Brauer equivalent.
\end{theorem}

As an immediate consequence, we get the following in terms of derived
categories.  Denote by $\beta$ the Brauer class of the Azumaya algebra
$\CliffB_0$ on the discriminant cover $T \to S$ or stack $\stack{T}
\to S$, see
\S\ref{subsec:clifford_discriminant}--\ref{subsec:clifford_discriminant_odd}.

\begin{corollary}
\label{morita}
Let $Q \to S$ be a quadric fibration over a regular integral scheme
with simple degeneration along a regular divisor and $Q' \to S$
obtained by quadric reduction along a smooth section.  In odd relative
dimension, assume that 2 is invertible on $S$.  Let $\kc_0$ and
$\kc_0'$ be the respective even Clifford algebras.  Then there are
equivalences $\Db(S,\kc_0) \simeq \Db(S,\kc_0')$ and $\Db(T,\beta)
\simeq \Db(T,\beta')$ or $\Db(\stack{T},\beta) \simeq
\Db(\stack{T},\beta')$ depending on the parity of the relative
dimension.  
\end{corollary}

\begin{proof}[Proof of Theorem~\ref{thm:hyperbolic_splitting_morita}]
By induction on the rank of $N$ using
Theorem~\ref{thm:isotropic_splitting_principle}, we can assume that $N
\subset E$ is an isotropic line subbundle.  By
Corollary~\ref{cor:degeneration_reduction}, $(E',q',L)$ also has
simple degeneration along $D$.  We can assume that $D$ is nonempty,
otherwise the statement of the theorem is classical, see
\cite[IV~Prop.~8.1.1]{knus:quadratic_hermitian_forms}.  Write
$\CliffAlg_0 = \CliffAlg_0(E,q,L)$ and
$\CliffAlg_0'=\CliffAlg_0(E',q',L)$.

First we handle the case of even rank.  Let $f : T \to S$ be the
discriminant cover as in \S\ref{subsec:clifford_discriminant} and
$\CliffB_0$ and $\CliffB_0'$ the Azumaya algebras on $T$ associated to
$\CliffAlg_0$ and $\CliffAlg_0'$ by
Proposition~\ref{prop:simple_azumaya}.  Since $S$ and $D$ are regular
and $f:T \to S$ is a finite flat morphism branched along $D$, we have
that $T$ is regular.  By
Theorem~\ref{thm:isotropic_splitting_principle} and
Propositions~\ref{prop:split_hyperbolic_case}, we have that
$\CliffB_0$ and $\CliffB_0'$ are Zariski locally Brauer equivalent,
hence they are Brauer equivalent
Proposition~\ref{prop:generic_injectivity}.  In particular,
$\CliffAlg_0 = f\pushforward \CliffB_0$ and $\CliffAlg_0'=
f\pushforward \CliffB_0'$ are Morita $S$-equivalent.

Now we deal with the case of odd rank.  
Let $f : \stack{T} \to S$ be the discriminant stack as in
\S\ref{subsec:clifford_discriminant_odd} and $\CliffB_0$ and
$\CliffB_0'$ the Azumaya algebras on $\stack{T}$ associated to
$\CliffAlg_0$ and $\CliffAlg_0'$ by
Proposition~\ref{prop:simple_azumaya_odd}.  Since $S$ and $D$ are
regular, the local discriminant covers are regular, hence $\stack{T}$
is regular, being covered by quotient stacks of regular schemes by
finite \'etale group schemes (we use again our hypothesis that 2 is
invertible).  Since $S$ is integral and $D$ is nonempty, $\stack{T}$
is integral.  By Theorem~\ref{thm:isotropic_splitting_principle} and
Propositions~\ref{prop:split_hyperbolic_case}, we have that
$\CliffAlg_0$ and $\CliffAlg_0'$ are Zariski locally Morita equivalent
on $S$, hence $\CliffB_0$ and $\CliffB_0'$ are Zariski locally Brauer
equivalent on $\stack{T}$ (by the uniqueness of the coarse moduli
space), hence they are Brauer equivalent by
Proposition~\ref{prop:generic_injectivity} (hence Morita
$\stack{T}$-equivalent). In turn, this implies that $\CliffAlg_0 =
f\pushforward \CliffB_0$ and $\CliffAlg_0'= f\pushforward \CliffB_0'$
are Morita $S$-equivalent.
\end{proof}

\begin{remark}
\label{rem:FM_kernel}
The equivalence in the proof of Corollary~\ref{morita} is actually a
Fourier--Mukai functor whose kernel is an object $P \in \Db(S\times
S)$ whose local structure over $S$ (see
Theorem~\ref{thm:isotropic_splitting_principle}) is described by
Proposition~\ref{prop:split_hyperbolic_case}.
\end{remark}

\subsection{Two useful results from the algebraic theory of quadratic forms}

We use two results from the classical algebraic theory of quadratic
forms:\ the Amer--Brumer theorem and a Clifford algebra condition for
isotropy of quadratic forms of rank 4.  To fit the context, we will
state these results in the geometric language of quadric fibrations.

\begin{theorem}[Amer--Brumer Theorem]
\label{amer-brumer}
Let $X \to Y$ be the relative complete intersection of two quadric
fibrations over an integral scheme $Y$ and let $Q \to S$ be the
associated linear span quadric fibration.  Then $X \to Y$ has a
rational section if and only if $Q \to S$ has a rational section.
Furthermore, if $X \to Y$ has a section (resp.\ smooth section), then
so does $Q \to S$.
\end{theorem}
\begin{proof}
The first assertion immediately reduces to the classical Amer--Brumer
theorem for fields (see \cite[Thm.~17.14]{elman_karpenko_merkurjev})
by going to the generic point.  Any section of $X \to Y$ immediately
yields a section of $Q \to S$ and the final assertion follows since
the regular locus of a fiber of $X \to Y$ is contained in the
intersection of the regular loci of the corresponding fibers of the
two quadric fibrations.
\end{proof}

\begin{remark}
\label{rem:Amer}
There is an amplification of the Amer--Brumer Theorem
(cf. Leep~\cite[Thm.~2.2]{leep:amer_brumer_arbitrary}, see also
\cite{amer:theorem} or \cite{pfister:amer}), stating that $X \to Y$
rationally contains a linear subspace of a given dimension if and only
if the linear span $Q \to S$ does.
\end{remark}

We deduce one corollary which will be useful in the sequel.  While (at
least in characteristic zero) we could appeal to
\cite{campana_peternell_pukhlikov} or \cite{graber_harris_starr},
here we give a direct argument.

\begin{lemma}
\label{lem:C_2}
Let $X \to Y$ be a relative complete intersection of two quadric
fibrations over a smooth complete curve $Y$ over an algebraically
closed field.  If $X \to Y$ has relative dimension $> 2$ then it has a
section.
\end{lemma}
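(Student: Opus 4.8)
The plan is to combine two classical inputs: the Amer--Brumer theorem (Theorem~\ref{amer-brumer}) and the Tsen--Lang theorem on $C_i$-fields. First I would pass to the associated linear span quadric fibration $Q \to S$, where $S = \PP(L_1\dual \oplus L_2\dual) \to Y$ is the $\PP^1$-bundle over $Y$ parametrizing the pencil; by Theorem~\ref{amer-brumer} it suffices to show that $Q \to S$ admits a rational section, and then to upgrade a resulting rational section of $X \to Y$ to an honest one.

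To produce the rational section of $Q \to S$, I would observe that, since $Y$ is a curve, $S$ is an integral scheme of dimension $2$ over the algebraically closed field $k$, so $k(S)$ has transcendence degree $2$ over $k$ and is therefore a $C_2$-field by Tsen--Lang. The generic fibre of $Q \to S$ is the projective quadric over $k(S)$ defined by the linear span quadratic form, which is a quadratic form in $n = \rk E$ variables over $k(S)$. Since the relative complete intersection of two quadric fibrations in $\PP(E)$ has relative dimension $n-3$, the hypothesis that $X \to Y$ have relative dimension $> 2$ forces $n \geq 6$, and in particular $n > 4 = 2^2$; the $C_2$-property then yields a nontrivial zero, i.e.\ a $k(S)$-rational point on the generic fibre of $Q \to S$. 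This is exactly a rational section of $Q \to S$, and composing with Amer--Brumer gives a rational section of $X \to Y$.

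Finally I would upgrade this to a section: $X \to Y$ is projective, being a closed subscheme of a projective bundle over $Y$, hence proper, and $Y$ is a smooth complete curve, so every local ring of $Y$ at a closed point is a discrete valuation ring; the valuative criterion of properness then extends the rational section uniquely across the finitely many points where it is not yet defined. I do not expect a genuine obstacle here; the only points requiring care are the dimension count relating the relative dimension of $X \to Y$, the rank $n$, and the threshold $n > 2^2$ needed to invoke the $C_2$-property, together with the fact that $S$ has dimension exactly $2$ --- which is precisely what makes $k(S)$ a $C_2$-field, and thus what ties the argument to the base being a curve. (In characteristic zero one could instead quote \cite{graber_harris_starr}, since the fibres of $X \to Y$ are then rationally connected Fano varieties; the route above has the advantage of being elementary and characteristic-free.)
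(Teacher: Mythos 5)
Your proof is correct and follows essentially the same route as the paper's: pass to the linear span quadric fibration $Q \to S$ over the ruled surface $S$, use that $k(S)$ is a $C_2$-field (Tsen--Lang) to produce a rational section of $Q \to S$, transfer it back to $X \to Y$ via Amer--Brumer, and then invoke properness of $X$ and smoothness of the curve $Y$ to upgrade the rational section to an honest one. The one small difference is cosmetic: you pin down $n \geq 6$ from the hypothesis on relative dimension, while the paper simply notes the rank is $\geq 5$; both exceed the $C_2$ threshold of $4$, so the argument is the same.
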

\begin{proof}
Let $Q \to S$ be the associated linear span quadric fibration
represented by a quadratic form $q$ over $S$ of rank $\geq 5$.  Since
$Y$ is an integral curve, $S$ is an integral surface, whose function
field $K$ (over an algebraically closed field) is thus a $C_2$-field
(cf.\ \cite[Thm.~97.7]{elman_karpenko_merkurjev}).  Hence $q$ has a
nontrivial zero over $K$, i.e., $Q \to S$ has a rational section.
Then by the Amer--Brumer Theorem~\ref{amer-brumer}, $X\to Y$ has a
rational section, hence a section (since $Y$ is a smooth curve and $X$
is proper).
\end{proof}

Now we come to a geometric rephrasing of a classical fact about the
isotropy of quadratic forms of rank 4.

\begin{theorem}
\label{thm:well-known}
Let $\pi : Q \to S$ be a quadric surface fibration with simple
degeneration along a smooth divisor over a regular integral scheme $S$
and let $T \to S$ be its discriminant cover.  Then $\CliffB_0 \in \Br(T)$
is trivial if and only if $\pi$ has a rational section.
\end{theorem}
\begin{proof}
Let $(E,q,L)$ be a quadratic form of rank 4 over $S$ whose associated
quadric surface fibration is $Q \to S$.  Let $K$ be the function field
of $S$ and $L$ the function field of $T$.  Then on the generic fiber,
$\CliffAlg_{0,K}$ is the even Clifford algebra of the quadratic form
$(E,q,L)_K$ of rank 4 over $K$.  By
\cite[Thm.~6.3]{knus_parimala_sridharan:rank_4} (also see
\cite[2~Thm.~14.1,~Lemma~14.2]{scharlau} in characteristic $\neq 2$
and \cite[II~Prop.~5.3]{baeza:semilocal_rings} in characteristic $2$),
the regular quadratic form $q_K$ is isotropic (i.e., $\pi : Q \to S$
has a rational section) if and only if $q_L$ is isotropic if and only
if $\CliffB_{0,L} \in \Br(L)$ is trivial.  Since $T$ is regular (since
$S$ and the discriminant divisor are), $\Br(T) \to \Br(L)$ is injective
(see Proposition~\ref{prop:generic_injectivity}).  In particular,
$\CliffB_0 \in \Br(T)$ is trivial if and only if $\CliffB_{0,L} \in
\Br(L)$ is trivial.
\end{proof}

We shall state an often (albeit implicitly) used corollary of this
after recalling some obvious facts about rationality of relative
schemes.  Let $k$ be an arbitrary field and $S$ an integral $k$-scheme
with function field $k(S)$.  To any morphism $\pi : Z \to S$ denote by
$Z_{k(S)} = Z \times_{S} \Spec k(S) \to \Spec k(S)$ its generic fiber.

\begin{lemma}
\label{lem:rationality}
Let $Z$ and $Z'$ be schemes over an integral $k$-scheme $S$.
\begin{enumerate}
\item If the $k(S)$-schemes $Z_{k(S)}$ and $Z'_{k(S)}$ are
$k(S)$-birational then the $k$-schemes $Z$ and $Z'$ are $k$-birational.

\item Assume that $S$ is $k$-rational.  If the $k(S)$-scheme
$Z_{k(S)}$ is $k(S)$-rational then the $k$-scheme $Z$ is $k$-rational.
\end{enumerate}
\end{lemma}
\begin{proof}
For (1), note that we have a canonical isomorphism of function fields
$k(Z') \isom k(S)(Z'_{k(S)})$.  Hence if $Z_{k(S)}$ and $Z'_{k(S)}$
are $k(S)$-birational, then the total ring of fractions $k(Z) \isom
k(S)(Z_{k(S)})$ and $k(Z') \isom k(S)(Z'_{k(S)})$ are
$k(S)$-isomorphic.  In particular, they are $k$-isomorphic, hence $Z$
and $Z'$ are $k$-birational.

For (2), we apply (1) with $Z'=\PP^n_S$.  First note that since $S$ is
$k$-rational, we have that $\PP^n_S$ is $k$-rational.  Now by (1),
$Z_{k(S)}$ is $k(S)$-rational implies $Z_{k(S)}$ is
$k(S)$-birational to $\PP^n_{k(S)}$ implies $Z$ is $k$-birational to
$\PP^n_S$ implies that $Z$ is $k$-rational.
\end{proof}

\begin{corollary}
\label{cor:well-known_rationality}
With the hypotheses of Theorem~\ref{thm:well-known}, assume that $S$
is a rational $k$-scheme.  If the class of $\CliffB_0$ in $\Br(T)$ is
trivial then $Q$ is $k$-rational.
\end{corollary}
\begin{proof}
Let $Q_{k(S)}$ be the generic fiber of $Q \to S$, which is a smooth
quadric over the function field $k(S)$.  Note that $Q_{k(S)}$ is
$k(S)$-rational if and only if (see
\cite[Prop.~22.9]{elman_karpenko_merkurjev}) $Q_{k(S)}$ has a
$k(S)$-rational point (i.e., $\pi : Q \to S$ has a rational section)
if and only if (by Theorem~\ref{thm:well-known}) $\CliffB_0 \in
\Br(T)$ is trivial.  Now since $S$ is rational, if $Q_{k(S)}$ is
$k(S)$-rational then $Q$ is $k$-rational, by
Lemma~\ref{lem:rationality}.
\end{proof}

As a simple application, Corollary~\ref{cor:well-known_rationality}
gives a proof, in the spirit of quadratic forms, of one of the main
results of \cite{hassett:rational_cubic}:\ if $Z$ is a smooth cubic
fourfold containing a $k$-plane such that the associated quadric
surface bundle $Z' \to \PP^2$ has simple degeneration, then $Z$ is
$k$-rational if the associated Brauer class of $\CliffB_0$ on the
discriminant cover (which is a K3 surface of degree 2) is trivial.

\section{Categorical tools}\label{section:tools}

In this section, we introduce the main categorical tools that we will
apply to geometric examples.  The main new technical tool that we
introduce (in \S\ref{subsec:relative-hpd}) is relative version of
homological projective duality, which we apply in the case of quadric
fibrations.  This is a generalization of Kuznetsov's original theory
\cite{kuznetsov:hpd}.  We note that most definitions in this section
were originally stated over an algebraically closed field of
characteristic zero.  Where necessary, we will show that this
hypothesis is often not needed to prove the basic results we'll
utilize.

\subsection{Semiorthogonal decompositions over a base}
\label{basechange}
Let $k$ be a field and $\cat{T}$ a $k$-linear triangulated category.
Any unadorned product of $k$-schemes will denote a fiber product over
$k$.  In their seminal work \cite{bondal_kapranov:reconstructions},
Bondal and Kapranov define semiorthogonal decompositions for
$k$-linear triangulated categories, in the case where $k$ is
algebraically closed of characteristic zero.  We will briefly recall
the definitions and main results from
\cite[\S2--3]{bondal_kapranov:reconstructions}, arguing that they are
still valid over any field.

Given a full triangulated subcategory $\cat{A}$ of $\cat{T}$, we
denote by $\cat{A}^{\perp}$ its \linedef{right orthogonal}, that is
the full subcategory of $\cat{T}$ whose objects are all the $B$
satisfying $\Hom_{\cat{T}}(A,B)=0$. Similarly we define the
\linedef{left orthogonal} $^\perp\cat{A}$.  A full triangulated
subcategory $\cat{A}$ of $\cat{T}$ is called \linedef{right} (resp.\
\linedef{left}) \linedef{admissible} if the embedding functor admits a
right (resp.\ left) adjoint. The subcategory $\cat{A}$ is
\linedef{admissible}, if it is both right and left admissible.  Notice
that the original definition of admissibility is different, but
equivalent to this one
\cite[Def.~1.2,~Prop.~1.5]{bondal_kapranov:reconstructions}

\begin{proposition}[{\cite[Prop.~1.5]{bondal_kapranov:reconstructions}}]
\label{prop-admissible-and-decompositions}
Let $\cat{A}$ be a right (resp.\ left) admissible subcategory of
$\cat{T}$. Then $\cat{T}$ is generated by $\cat{A}$ and
$\cat{A}^{\perp}$ (resp.\ $\cat{A}$ and $^\perp\cat{A}$) as a
triangulated category.  In particular, if $\cat{A}$ is right
admissible, $\cat{A}^{\perp}$ is left admissible, and if $\cat{A}$ is
left admissible, $^\perp\cat{A}$ is right admissible.
\end{proposition}
\begin{proof}
The proof of \cite[Prop.~1.5]{bondal_kapranov:reconstructions} works
over any field:\ the first statement is the implication $b)
\Rightarrow c)$ in the notation of
\cite{bondal_kapranov:reconstructions}; the second and third
statements follow from the implication $c) \Rightarrow b)$.
\end{proof}

\begin{definition}[{\cite[Def.~2.4]{bondal_orlov:semiorthogonal}}]
\label{def-semiortho}
A \linedef{semiorthogonal decomposition} of $\cat{T}$ is an ordered
sequence of admissible subcategories $\cat{A}_1,
\dotsc, \cat{A}_n$ of $\cat{T}$ such that:
\begin{itemize}
\item for all objects $A_i$ of $\cat{A}_i$ and $A_j$ of $\cat{A}_j$,
$\Hom_{\cat{T}}(A_i,A_j) = 0$ for all $i>j$, and

\item for every object $T$ of $\cat{T}$, there is a chain of morphisms
$0=T_n \to T_{n-1} \to \dotsm \to T_1 \to T_0 = T$ such that the cone
of $T_k \to T_{k-1}$ is an object of $\cat{A}_k$ for all $1 \leq k
\leq n$.
\end{itemize}
Such a decomposition will be written
$$
\cat{T} = \langle \cat{A}_1, \dotsc, \cat{A}_n \rangle.
$$
\end{definition}

Now fix a scheme $Y$ smooth over $k$ and of finite Krull dimension.

\begin{definition}
Let $p : X \to Y$ be a $Y$-scheme. A strictly full subcategory
$\cat{A}$ of $\Db(X)$ is \linedef{$Y$-linear} if for every $E$ in
$\cat{A}$ and $G$ in $\Db(Y)$ we have that $p^*G \otimes E$ is in
$\cat{A}$. A semiorthogonal decomposition of $\Db(X)$ is $Y$-linear if
all the components are $Y$-linear.  If $p' : X' \to Y$ is another
$Y$-scheme, then a functor $F : \Db(X) \to \Db(X')$ is $Y$-linear if
for every $E$ in $\Db(X)$ and $G$ in $\Db(Y)$, there is a bifunctorial
isomorphism $F(p\pullback G \tensor E) \isom p'{}\pullback G \tensor
F(E)$.
\end{definition}

\begin{lemma}
\label{S-lin-deco}
Let $X$ be a $Y$-scheme. If $\cat{A}$ in $\Db(X)$ is a $Y$-linear
admissible subcategory, then both $^\perp \cat{A}$ and $\cat{A}^\perp$
are $Y$-linear.  If $X'$ is another $Y$-scheme and $F : \Db(X) \to
\Db(X')$ is a $Y$-linear functor admitting a right or left adjoint,
then this adjoint is also $Y$-linear.
\end{lemma}
\begin{proof}
The first claim is \cite[Lemma~2.36]{kuznetsov:hyp-sections}.  The
second claim is \cite[Lemma~2.33]{kuznetsov:hyp-sections} for the
right adjoint; for the left adjoint, the proof is the same, but uses
the contravariant version of Yoneda's lemma.
\end{proof}

Fourier--Mukai functors provide a geometric way of producing
admissible subcategories.  Let $X$ and $X'$ be smooth schemes over $k$
and $E$ an object of $\Db(X' \times X)$.  The \linedef{Fourier--Mukai
functor} with kernel $E$ is the functor $\Phi_E: \Db(X') \to \Db(X)$
defined by
$$
\Phi_E (-) = q_* (p^* (-) \otimes E),
$$
where $p$ and $q$ are the projections from $X' \times X$ to $X'$ and
$X$, respectively (recall the we denote derived functors as if they
were underived).  If $X$ and $X'$ are $Y$-schemes, $i : X' \times_Y X
\to X' \times X$ is the natural embedding, and $E$ is an object in
$i\pushforward \Db(X' \times_Y X) \subset \Db(X' \times X)$, then the
Fourier--Mukai functor $\Phi_E : \Db(X') \to \Db(X)$ is $Y$-linear,
see \cite[Lemma~2.35]{kuznetsov:hyp-sections}.

\begin{proposition}
\label{prop:FM-is-admiss}
Let $X$ and $X'$ be smooth schemes over $k$ and $\Phi_E : \Db(X') \to
\Db(X)$ a Fourier--Mukai functor.  If $\supp(E)$ is proper over $X'$
and $X$ then $\Phi_E$ has a right and left adjoint of Fourier--Mukai type.  If in addition, $\Phi_E$ is fully faithful,
then there is a semiorthogonal decomposition
$$
\Db(X) = \langle \cat{A}, \Phi_E(\Db(X')) \rangle,
$$
where $\cat{A}$ is the left orthogonal of $F(\Db(X'))$.
\end{proposition}
\begin{proof}
This is \cite[Lemma~2.4--2.5]{kuznetsov:hyp-sections}, noting that
since $X$ and $X'$ are smooth, the various conditions on the
finiteness of cohomological amplitude are satisfied, ensuring that the
adjoints land in the derived category of bounded complexes and are of
Fourier--Mukai type.  Also, the projectivity hypothesis can be relaxed
to properness, which is all that is required to invoke
Grothendieck--Verdier duality.
\end{proof}

We will mostly use Proposition~\ref{S-lin-deco} in the following
relative situation.  Let $X$ and $X'$ be schemes proper over $Y$ and
smooth over $k$.  Then any $Y$-linear Fourier--Mukai functor $\Phi_E :
\Db(X) \to \Db(X')$ has right and left adjoints.
In fact, there is a stronger result about the existence of adjoints.

\begin{proposition}
\label{prop:with_saturation}
Let $X$ and $X'$ be smooth schemes over $k$.  If $X'$ is proper, then
any exact functor $F : \Db(X') \to \Db(X)$ has a left and right
adjoint.  In particular, if $F$ is fully faithful, then there is a
semiorthogonal decomposition
$$
\Db(X) = \langle \cat{A}, F(\Db(X')) \rangle,
$$
where $\cat{A}$ is the left orthogonal of $F(\Db(X'))$.
\end{proposition}
\begin{proof}
By \cite[Prop.~2.14]{bondal_kapranov:reconstructions} and
\cite[Cor.~3.1.5]{bondal_vdB}, $\Db(Y)$ is saturated, i.e., every
functor (covariant or contravariant) of finite type, from $\Db(Y)$ to
the category of $k$-vector spaces, is representable.  By
\cite[Prop.~2.6,~2.14]{bondal_kapranov:reconstructions}, any full
triangulated subcategory is admissible if it is saturated.
\end{proof}

In particular, this shows that any fully faithful functor $\Db(\Spec
k) \to \Db(X)$ has admissible image.

\begin{corollary}
\label{cor:from-ff-emb-to-semiorth}
Let $X$ be smooth and $X'_1, \ldots, X'_n$ be smooth proper schemes over
$k$, and $F_i: \Db(X'_i) \to \Db(X)$ fully faithful functors, such that
$F_i(\Db(X'_i)) \subset F_j(\Db(X'_j))^{\perp}$ whenever $i > j$.  Then
there is a semiorthogonal decomposition:
$$\Db(X) = \langle \cat{A}, F_1(\Db(X'_1)), \ldots, F_n(\Db(X'_n))\rangle,$$
where $\cat{A}$ is the left orthogonal of the category generated by the $F_i(\Db(X'_i))$. 
\end{corollary}

So far, we have seen that many basic definitions and formal results
concerning semiorthogonal decompositions can be given and hold over
any field $k$.  The main question is, given a smooth
projective variety $X$ over $k$, how to produce a semiorthogonal
decomposition of $\Db(X)$. In the literature, almost all constructions
are described over an algebraically closed field of characteristic
zero, even when this restrictive assumption is not necessary.

We will now present some simple descent results for semiorthogonal
decompositions.
Given a smooth scheme $X$ over $k$, consider its scalar extension
$\ol{X}:= X \times_{\Spec(k)} \Spec(\ol{k})$. We will also denote
$\overline{E}$ the pullback of a coherent sheaf $E$ on $X$, under the
projection morphism $\overline{X} \to X$.  Denote by
$\overline{\cat{A}}\subset \Db(\overline{X})$ the base change of a
triangulated subcategory $\cat{A}\subset \Db(X)$ to $\ol{k}$.  A
theory of base change for triangulated category has been developed in
\cite{kuznetbasechange}.  In order to study how semiorthogonal
decompositions behave under base change, the following is very useful.

\begin{lemma}[{\cite[Lemma~2.12]{orlovequivabel}}]
\label{orlovslemma}
Let $X$ and $X'$ be smooth schemes over $k$.  The Fourier--Mukai
functor $\Phi : \Db(X') \to \Db(X)$ is an equivalence (resp.\ fully
faithful) if and only if the Fourier--Mukai functor $\ol{\Phi}:
\Db(\ol{X}') \to \Db(\ol{X})$ is an equivalence (resp.\ fully
faithful).
\end{lemma}

\begin{lemma}
\label{lem:base-change-semiorth}
Let $X$ be a smooth scheme over $k$.  Suppose that there exist
admissible triangulated categories $\cat{A}_i$ in $\Db(X)$ such that
$\Db(\ol{X}) = \langle \ol{\cat{A}}_1, \ldots, \ol{\cat{A}}_n
\rangle$.  Then $\Db(X) = \langle \cat{A}_1, \ldots, \cat{A}_n
\rangle$.
\end{lemma}
\begin{proof}
The ordered sequence $\cat{A}_1, \ldots, \cat{A}_n$ of subcategories
is semiorthogonal by flat base change. Consider $\langle \cat{A}_1,
\ldots, \cat{A}_n \rangle$ and its orthogonal complement $\cat{A}$,
which are both admissible in $\Db(X)$ by
Proposition~\ref{prop-admissible-and-decompositions}.  By hypothesis,
we have $\ol{\cat{A}}=0$.  This means that for each object $A$ of
$\cat{A}$, we have that $\overline{A}=0$, and hence $A=0$, since
otherwise, $A$ would have a nonzero homology group, which would remain
nonzero over $\overline{k}$ by flat base change. Hence $\cat{A}=0$ and
the ordered sequence of subcategories generate $\Db(X)$.
A similar argument appears in the proof of
\cite[Prop.~2.1]{an-au-ga-za}.
\end{proof}

We observe that Lemma~\ref{lem:base-change-semiorth} holds if we
replace $\ol{k}$ with any field extension of $k$.
A special case of semiorthogonal decompositions is provided by exceptional collections.

\begin{definition}[{\cite[Sect. 2]{bondal:representations}}]
\label{def-except}
An object $E$ of $\cat{T}$ is \linedef{exceptional} if $\Hom_{\cat{T}}
(E,E) = k$ and $\Hom_{\cat{T}}(E,E[i])=0$ for all $i \neq 0$.  An
ordered sequence $(E_1,\dotsc ,E_l)$ of exceptional objects is an
\linedef{exceptional collection} if $\Hom_{\cat{T}}(E_j,E_k[i])=0$ for
all $j>k$ and for all $i \in \ZZ$.
\end{definition}

If $E$ in $\cat{T}$ is an exceptional object, the triangulated
subcategory generated by $E$ (that is, the smallest full triangulated
subcategory of $\cat{T}$ containing $E$) is equivalent to the derived
category of $\Spec k$, see \cite[\S6]{bondal:representations}.  By
Proposition~\ref{prop:with_saturation}, this subcategory is
admissible.  Hence, given an exceptional collection $(E_1,\dotsc
,E_l)$ in the derived category $\Db(X)$ of a smooth scheme,
Corollary~\ref{cor:from-ff-emb-to-semiorth} provides a semiorthogonal
decomposition (see also \cite[\S2]{bondal_orlov:semiorthogonal})
$$
\Db(X) = \langle \cat{A}, E_1, \dotsc, E_l\rangle,
$$
where $E_i$ denotes, by abuse of notation, the category generated by
$E_i$ and $\cat{A}$ is the full triangulated subcategory consisting of
objects $A$ satisfying $\Hom_{\cat{T}}(E_i,A)=0$ for all $1 \leq i
\leq l$.

\subsection{Semiorthogonal decomposition for quadric fibrations}

We now describe the semiorthogonal decomposition of the derived
category of a quadric fibration given by
Kuznetsov~\cite{kuznetsov:quadrics}, generalizing the work of
Kapranov~\cite{kapranov:quadric}, \cite{kapranov:derived}. See
\S\ref{sec:Quadratic_forms} for precise definitions of the notions of
(line bundle-valued) quadratic forms, quadric fibration, and even
Clifford algebras.  Let $Y$ be a smooth scheme and $\pi: Q \to Y$ a
flat quadric fibration of relative dimension $n-2$ associated to a
quadratic form $q: E \to L$ on a locally free $\OO_Y$-module $E$ of
rank $n \geq 2$ (see \S\ref{subsec:quadric_fibrations}).
Denote by $\ko_{Q/Y}(1)$ the restriction to $Q \to Y$ of the relative
ample line bundle $\ko_{\PP(E)/Y}(1)$ and by $\kc_0 = \kc_0(E,q,L)$
the even Clifford algebra, which is a locally free $\OO_Y$-algebra
whose isomorphism class is an invariant of $\pi : Q \to Y$ (see
\S\ref{subsec:Even_Clifford_algebra}).  

\begin{theorem}[{\cite[Thm. 4.2]{kuznetsov:quadrics}}] 
\label{semiortquad}
Let $\pi: Q \to Y$ be a quadric fibration of relative dimension $n-2$
over a scheme $Y$ smooth over a field.  There is a fully faithful
Fourier--Mukai functor $\Phi: \Db(Y,\kc_0) \to \Db(Q)$ and a
semiorthogonal decomposition
$$
\Db(Q) = \langle \Phi \Db(Y,\kc_0), \pi^* \Db(Y)(1), \dotsc, \pi^*
\Db(Y)(n-2) \rangle.
$$
\end{theorem}
\begin{proof}
We observe that this result has been stated in
\cite[Thm.~4.2]{kuznetsov:quadrics} over an algebraically closed field
of characteristic 0.  We will briefly explain why the main technical
results needed in \cite{kuznetsov:quadrics} hold over a general field,
the rest of the proof being very formal.  

First, in Proposition~\ref{prop:compare}, we prove that the even
Clifford algebra $\CliffAlg_0$ and Clifford bimodule $\CliffAlg_1$
defined in \S\ref{subsec:Even_Clifford_algebra} are isomorphic to the
ones defined in \cite[\S3]{kuznetsov:quadrics} (which are only correct
in characteristic $\neq 2$).  Second, the fact that the coordinate
algebra and homogeneous Clifford algebra (see Appendix
\ref{appendix_clifford}) of a flat quadric fibration are Koszul dual
algebras (as stated in \cite[Lemma~3.1]{kuznetsov:quadrics}) holds
over a general base scheme by sheafifying the construction in
\cite[Ch.~2,\S6,~Ex.~4]{poli-posi}.  Third, the construction in
\cite[Lemma~4.5]{kuznetsov:quadrics} of a Fourier--Mukai kernel
inducing a fully faithful functor $\Db(Y,\kc_0) \to \Db(Q)$ is
explicit and carries over to our notion of Clifford algebra and
bimodule.  The proof, in \cite[Lemma~4.4]{kuznetsov:quadrics}, of the
semiorthogonality of the other components is a formal consequence of
Grothendieck--Verdier duality for $\pi$ and the cohomology of the
exact sequence \eqref{eq:sequnce-quadric-fibration}.  Finally, and
most importantly, the resolution of the diagonal of a quadric
fibration stated in \cite[Thm.~2.4]{kuznetsov:quadrics} holds over a
general base scheme by sheafifying the construction in
\cite[\S4.4]{kko}.  This allows the proof, in
\cite[p.~1365]{kuznetsov:quadrics}, that the subcategories generate
$\Db(Q)$, to carry over.
\end{proof}

More generally, if $\pi : Z \to Y$ is flat morphism of smooth schemes,
which is a relative hypersurface in a projective bundle $\PP(E) \to Y$
in such a way that $\omega_{Z/Y} = \ko_{\PP(E)/Y}(-l)|_Z$, then the
following well-known result carries over to the relative setting.

\begin{proposition}\label{decompo-of-relative}
For all $i \in \ZZ$, the functors $\pi^*(-) \tensor \ko_{Z/Y}(i)$ are
fully faithful. For all $j \in \ZZ$, there is a semiorthogonal
decomposition:
$$
\Db(Z) = \langle \cat{A}^j, \pi^* \Db(Y) (j), \dotsc, \pi^* \Db(Y)(j+l-1)
\rangle,
$$
where $\cat{A}^j$ is the orthogonal complement.
\end{proposition}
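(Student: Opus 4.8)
The plan is to deduce Proposition~\ref{decompo-of-relative} from the standard relative Beilinson-type resolution for projective bundles together with the projection formula and relative duality, in the spirit of Orlov's projective bundle theorem and its relative-over-$Y$ version. First I would recall that for the projective bundle $p : \PP(E) \to Y$ of rank $n$, Orlov's theorem gives the semiorthogonal decomposition $\Db(\PP(E)) = \langle p^*\Db(Y), p^*\Db(Y)(1), \dotsc, p^*\Db(Y)(n-1)\rangle$, and more generally, by twisting, $\langle p^*\Db(Y)(j), \dotsc, p^*\Db(Y)(j+n-1)\rangle$ for any $j \in \ZZ$. The functors $p^*(-) \otimes \ko(i)$ are fully faithful because $p_*\ko_{\PP(E)} = \ko_Y$ and $p_*\ko_{\PP(E)/Y}(i-i') $ vanishes in the relevant range, so by the projection formula and adjunction $\RHom(p^*A(i), p^*B(i)) = \RHom(A, B \otimes p_*\ko) = \RHom(A,B)$.

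Next I would establish full faithfulness of $\pi^*(-)\otimes \ko_{Z/Y}(i)$ on $Z$. Let $\iota : Z \hookrightarrow \PP(E)$ be the closed embedding, so $\pi = p \circ \iota$. Using the hypothesis $\omega_{Z/Y} = \ko_{\PP(E)/Y}(-l)|_Z$ together with $\omega_{\PP(E)/Y} = \ko(-n)\otimes p^*\det E^\vee$ (or the analogous formula with the paper's conventions), the relative dualizing complex of $\iota$ is a shift of a line bundle of the form $\ko(n-l)|_Z \otimes p^*(\det E)|_Z$, up to a shift by the codimension $c = n-1-\dim_{\text{fiber}} Z$. For any $A, B \in \Db(Y)$ one computes, via $\iota_*$-$\iota^*$ adjunction and Grothendieck duality along $\iota$, that $\RHom_Z(\iota^*F, \iota^*G) = \RHom_{\PP(E)}(F, \iota_*\iota^*G)$, and one resolves $\iota_*\ko_Z$ by a Koszul-type complex (when $Z$ is a complete intersection of quadric fibrations, which is the case of interest, this is literally a Koszul complex on sections of $\ko(2)\otimes p^*(\text{line bundles})$). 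Feeding $F = p^*A(i)$ and $G = p^*B(i)$ into this and using $p_*$ of the Koszul terms, all the correction terms $p_*\ko(2m)\otimes(\dotsm)$ for $1 \le m$ contribute nothing in the range where they appear with the appropriate twist, leaving $\RHom_Y(A,B)$. The same Koszul computation shows the generation/semiorthogonality statement: restricting Orlov's decomposition of $\PP(E)$ along $\iota^*$ shows that $\pi^*\Db(Y)(j), \dotsc, \pi^*\Db(Y)(j+l-1)$ is an exceptional-type sequence of admissible subcategories of $\Db(Z)$ (semiorthogonality because the Koszul resolution has length $\le n - l$ hence the extra $\ko(2m)$ twists never bridge the gap between consecutive twists in the truncated range), and one defines $\cat{A}^j$ as the orthogonal complement, which is automatically admissible since the $\pi^*\Db(Y)(j+t)$ are.

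The main obstacle I expect is the bookkeeping in the Koszul/duality computation: one must check carefully that in passing from $\PP(E)$ to $Z$ via $\iota_*\ko_Z \simeq \big[\dotsb \to \bigwedge^2 N^\vee \to N^\vee \to \ko\big]$ (with $N$ the conormal bundle, a sum of $\ko(-2)\otimes p^*(\dots)$'s when $Z$ is cut out by quadrics), the twisted terms $p_*\big(\ko(i-i'-2m)\otimes p^*(\dotsm)\big)$ all vanish for $i - i'$ in the range $\{-(l-1), \dotsc, l-1\}$ and $1 \le m \le n-l$, i.e. that $-2(n-l) \le i-i'-2m \le -1$ stays within the ``no cohomology'' window $-(n-1) \le \bullet \le -1$ of the projective bundle. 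This amounts to the numerical inequality $2(n-l) + (l-1) \le n-1$, i.e. $n - l \ge \dim_{\text{fiber}} Z + 1 \ge 1$, which holds precisely because $Z$ has positive fiber dimension (the interesting case); the edge cases and the exact conventions for $\ko_{Z/Y}(1)$ versus $\ko_{\PP(E)/Y}(1)|_Z$ are where care is needed. Everything else is routine: full faithfulness of $p^*$, the projection formula, and stability of admissibility under passing to orthogonal complements are all standard over a perfect field, as recalled in \S\ref{subsec:relative-hpd}.
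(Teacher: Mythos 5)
Your plan follows essentially the same route as the paper's one-line proof, which simply cites \cite[\S2]{orlov:blowup} for the projective-bundle computation: reduce $\RHom_Z(\pi^*A\otimes\ko_{Z/Y}(i'),\pi^*B\otimes\ko_{Z/Y}(i))$ to $\RHom_Y(A,\,B\otimes R\pi_*\ko_Z(i-i'))$ via adjunction and the projection formula, then observe that $R\pi_*\ko_Z(k)=0$ for $-(l-1)\le k\le -1$ while $R\pi_*\ko_Z=\ko_Y$, both obtained by pushing a Koszul resolution of $\iota_*\ko_Z$ forward along $p$. Supplying the Koszul step explicitly is a sensible way to make that citation precise in the complete-intersection setting, and the overall structure is sound. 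The detour through Grothendieck duality along $\iota$ is not needed, though: the $\iota^*\dashv\iota_*$ adjunction together with the projection formula already does the job.

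The arithmetic in the key vanishing check, however, is off. If $Z$ is cut out by $c$ relative quadrics inside a $\PP^{n-1}$-bundle with $\omega_{Z/Y}=\ko_{Z/Y}(-l)$, then $c=(n-l)/2$, and the Koszul index $m$ runs over $0\le m\le c$, \emph{not} $1\le m\le n-l$; the twists appearing are $\ko(-2m)$ with $0\le 2m\le n-l$. For $i-i'\in\{-(l-1),\dotsc,0\}$ (only the nonpositive range matters --- semiorthogonality is one-sided, so the interval $\{1,\dotsc,l-1\}$ you wrote plays no role), the exponents $i-i'-2m$ land in $[-(n-1),0]$, hitting $0$ only when $m=0$ and $i=i'$ (which gives $\ko_Y$), with every other term inside the vanishing window $[-(n-1),-1]$ of the fiber $\PP^{n-1}$. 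Your displayed inequality $2(n-l)+(l-1)\le n-1$ simplifies to $n\le l$, which is false in the Fano/CY range and is not what is needed; the correct boundary check is $(n-l)+(l-1)\le n-1$, which holds with equality. With those corrections the Koszul argument closes, and it is precisely the content behind the paper's reference to Orlov.
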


\begin{proof}
The proof of fully faithfulness and orthogonality goes exactly as the
one for projective bundles in \cite[\S2]{orlov:blowup}.  In fact, one
constructs a functor from $\Db(Y)$ to $\Db(Z)$ as the derived functor
associated to the inverse image functor for coherent sheaves. If $\rk
E =2$, then $\pi$ is finite flat, hence affine and locally free.  If
$\rk E \geq 3$, then $\pi_* \ko_Z = \ko_Y$, and then we have that
$\pi$ is flat hence $\pi^*$ is fully faithful (see Lemma 2.1 of
\cite{orlov:blowup}). Tensoring with a line bundle is an equivalence
of $\Db(Z)$, so composing $\pi^*$ with $- \otimes \ko_{Z/Y (i)}$ is a
fully faithful functor for all $i$ integer.  Finally, the
subcategories $\pi^* \Db(Y) (j), \dotsc, \pi^* \Db(Y)(j+l-1)$ are
admissible and the projection formula (Lemma 2.5, \cite{orlov:blowup})
shows that they are semi-orthogonal.  Then use Corollary
\ref{cor:from-ff-emb-to-semiorth} to conclude.
\end{proof}

\subsection{Relative homological projective duality for quadric fibrations}
\label{subsec:relative-hpd}

Homological projective duality was introduced by Kuznetsov
\cite{kuznetsov:hpd} in order to study derived categories of
hyperplane sections (see also \cite{kuznetsov:hyp-sections}).  In
particular, it can be applied to relate the derived category of a
complete intersection of quadrics inside the projective space to the
derived category of the associated linear system of quadrics
\cite[\S5]{kuznetsov:quadrics}.  In what follows, we spell out this
latter construction over a smooth base scheme $Y$, providing a
straightforward generalization of Kuznetsov's construction to quadric
fibrations and their intersections.

\begin{remark}
Homological projective duality involves \linedef{noncommutative
schemes}, by which we mean (following Kuznetsov
\cite[\S2.1]{kuznetsov:quadrics}) a scheme $N$ together with an
$\ko_{N}$-algebra $\ka$, coherent as an $\ko_N$-module, and .  Morphisms are
defined accordingly.  By definition, a noncommutative scheme $(N,\ka)$
has $\Coh(N, \ka)$ as category of coherent sheaves and $\Db(N,\ka)$ as
bounded derived category. Following Bondal--Orlov
\cite[\S5]{bondal_orlov:ICM2002}, a noncommutative resolution of
singularities $(N,\ka)$ of a possibly singular scheme $N$ is a
torsion-free $\ko_N$-algebra $\ka$ of finite rank such that
$\Coh(N,\ka)$ has finite homological dimension (i.e., is smooth in the
noncommutative sense).  For $Y$ any scheme, a flat $Y$-noncommutative
scheme (or a noncommutative scheme over a base $Y$) is a pair
$(N,\ka)$ with $\pi : N \to Y$ a $Y$-scheme and $\pi\pushforward\ka$ a
flat $\ko_Y$-module.  Also, $(N,\ka)$ is projective if $N$ is.  A
Fourier--Mukai functor $\Phi : \Db(N,\ka) \to \Db(N',\ka')$ is an
integral transform whose kernel is an object $E$ in $\Db(N \times N',
\ka\op \boxtimes \ka')$.
\end{remark}
\begin{remark*}
In the case when $\ka$ and $\ka'$ are Azumaya algebras, this coincides
with the notion of ``twisted'' Fourier--Mukai functors developed in
\cite{canonaco-stellari}.
\end{remark*}

We now recast the basic notions of homological projective duality from
\cite[Def.~4.1]{kuznetsov:hpd} in a relative setting.  Let $M \to Y$
be a flat projective morphism of schemes smooth over a field $k$ and
$\ko_{M/Y}(1)$ a relatively very ample line bundle on $M$.

\begin{definition}
A \linedef{Lefschetz decomposition} of $\Db(M)$ with respect to
$\ko_{M/Y}(1)$ is a semiorthogonal decomposition

\begin{equation}\label{eq:def-of-lefschetz}
\Db(M) = \langle \cat{A}_0, \cat{A}_1(1), \dotsc, \cat{A}_{i-1}(i-1) \rangle,
\end{equation}
with
$$
0 \subset \cat{A}_{i-1} \subset \dotsc \subset \cat{A}_0.
$$
\end{definition}

Let $\PP(V) \to Y$ be a projective bundle and $f:M \ra \PP(V)$ be a
$Y$-morphism such that $f^*\ko_{\PP(V)/Y}(1)\cong \ko_{M/Y}(1)$.  Let
$\cali M \subset M \times_Y \PP(V^{\vee})$ be the universal hyperplane
section
$$
\cali M :=\{ (p,H)\in M \times_Y \PP(V^{\vee}) \; : \; p\
\mathrm{belongs\ to\ } H\}.
$$
We denote by ${\mathrm{pr}_1}: \cali M \to M$ the restriction of the
projection onto the first factor.
\begin{definition}[{\cite[Def 6.1]{kuznetsov:hpd}}]
\label{def:hpdual}
Let $V$ be a vector bundle over a smooth scheme $Y$ over a field $k$.
By a \linedef{homological projective duality pair} (or \linedef{HP
dual pair}) over $Y$ we mean the data of scheme $M$ smooth over $k$
and flat over $Y$, a noncommutative scheme $(N,\ka)$ flat over $Y$, a
morphism $f : M \to \PP(V)$, a morphism $g : N \to \PP(V\dual)$, a
Lefschetz decomposition of $\Db(M)$ with respect to $\OO_{M/Y} =
f\pullback\OO_{\PP(V)/Y}(1)$, and
an object $E$ in $\Db({\cali M} \times_{\PP(V\dual)} N,
\OO_{\cali M} \boxtimes \ka^{\mathrm{op}})$, such that the
Fourier--Mukai functor $\Phi_{E} : \Db(N,\ka) \to \Db({\cali M})$ is
fully faithful and gives the semiorthogonal decomposition
\begin{equation}\label{eq-deco-of-univ-hyp}
\Db({\cali M}) = \langle \Phi_{E} (\Db(N,\ka)), \cat{A}_1(1) \boxtimes \Db(\PP(V\dual)), \dotsc,
\cat{A}_{i-1}(i-1) \boxtimes \Db(\PP(V\dual)) \rangle.
\end{equation}
\end{definition}

Let the rank of $V$ be $n$.  For a vector subbundle $L \subset V\dual$
with locally free quotient, denote its orthogonal by $L^{\perp}
\subset V$, and consider the following relative linear sections
$$
M_L = M \times_{\PP(V)} \PP(L^{\perp}), \quad N_L = N \times_{\PP(V\dual)} \PP(L).
$$
of $M$ and $N$.  If $\ka$ is an $\ko_N$-algebra, then denote by $\ka_L
= \ka \boxtimes_{\ko_{\PP(V\dual)/Y}} \ko_{\PP(L)/Y}$. 

We now prove that the main result of the theory of homological
projective duality holds in the relative setting.

\begin{theorem}
\label{thm:HPD}
Let $V$ be a vector bundle on a smooth scheme $Y$ over $k$.  Let $M$
and $(N,\ka)$ be an HP dual pair over $Y$. If the Lefschetz
decomposition is $Y$-linear then:

\begin{itemize}
\item[(i)] $N$ is smooth over $k$
and admits a \linedef{dual} Lefschetz decomposition
$$
\Db(N,\ka) = \langle \cat{B}_{j-1}(1-j), \dotsc, \cat{B}_1 (-1),
\cat{B}_0 \rangle, \quad\quad 0 \subset \cat{B}_{j-1} \subset \dotsm
\subset \cat{B}_1 \subset \cat{B}_0
$$
\item[(ii)] for any vector subbundle $L \subset V\dual$ of rank $r$
with locally free quotient such that
$$
\dim\, M_L = \dim\, M - r, \quad \mbox{and}\quad \dim\, N_L = \dim\, N + r - n,
$$ 
there exists a triangulated category $\cat{C}_L$ and
semiorthogonal decompositions:
\begin{align*}
\Db(M_L) & = \langle \cat{C}_L, \cat{A}_r(1), \dotsc, \cat{A}_{i-1}(i-r) \rangle,\\
\Db(N_L,\ka_L) & = \langle \cat{B}_{j-1} (N-r-j), \dotsc, \cat{B}_{N-r}(-1), \cat{C}_L \rangle.
\end{align*}
\end{itemize}
\end{theorem}

\begin{proof}
The proof goes along the lines of \cite[\S5--6]{kuznetsov:hpd}.
Notably, \cite[Rem.~6.4]{kuznetsov:hpd}, gives three conditions (which
we will denote by (1), (2) and (3) in this proof) that are sufficient
for homological projective duality to hold for a given HP dual
pair. The description in \cite{kuznetsov:hpd} is very precise, so for
the sake of readability, we refrain from giving too many details here
and we address the interested reader to the original paper.  Roughly,
the first of these conditions requires the existence of a fully
faithful Fourier--Mukai functor $\Phi_{E}:\Db(N,\ka) \to \Db(\cali M)$
satisfying the orthogonality condition as in
\eqref{eq-deco-of-univ-hyp}. Hence, Condition (1) is part of the
definition of HP dual pair.

Condition (2) and (3) require roughly that the functor $\Phi_{E}^*
\circ {\mathrm{pr}_1}^* : \Db(M) \to \Db(N,\ka)$ is well-behaved on
all the \it primitive \rm components of the Lefschetz
decomposition. Here $\Phi_{E}^*$ denotes the left adjoint to
$\Phi_{E}$, while the $i$-th primitive component of a Lefschetz
decomposition is the orthogonal complement of $\cat{A}_i$ in
$\cat{A}_{i+1}$. A full and accurate treatment is given in \cite[\S
4]{kuznetsov:hpd}, which requires an immense amount of notation that
we refrain from introducing here.

Conditions (2) and (3) are proved in the case where $Y$ is a point in
\cite[Prop.~5.7,~Cor.~5.8]{kuznetsov:hpd} and
\cite[Lemma~5.9,~Prop.~5.10]{kuznetsov:hpd} respectively.  As remarked
upon in \cite[Rem.~5.13]{kuznetsov:hpd}, these statements from
\cite[\S5]{kuznetsov:hpd} hold true also under the wider hypothesis
assumed in our statement (even more generally, without projectivity
assumptions on $Y$).  Indeed, $Y$-linearity allows us to work
relatively over the scheme $Y$, replacing $\Hom_X$ by $p_*
\underline{\Hom}_X$ in all the homological arguments presented in
\cite[\S 5]{kuznetsov:hpd}.  As the three conditions of
\cite[Rem.~6.4]{kuznetsov:hpd} are satisfied, the Theorem is proved.
\end{proof}

\medskip 

Next, we will apply relative homological projective duality to the
case of flat quadric fibrations.  Let $E$ be a vector bundle of rank
$n$ on a smooth
scheme $Y$. Consider the projective bundle $p : M = \PP(E) \to Y$, the
relative ample line bundle $\ko_{M/Y}(1)$, and the semiorthogonal
decomposition (see \cite[Thm.~2.6]{orlov:blowup})

\begin{equation}
\label{eq:lefsch-of-Proj}
\Db(M) = \langle p^* \Db(Y) (-1), p^*\Db(Y), \dotsc, p^* \Db(Y)(n-2)\rangle.
\end{equation}
Let us denote by $m = \lfloor n/2 \rfloor$ and put
$$
\cat{A}_0= \cat{A}_1 = \dotsc = \cat{A}_{m-1} = \langle p^*\Db(Y)(-1),
p^*\Db(Y) \rangle,
$$
$$
\cat{A}_{m} = 
\begin{cases} 
\langle p^*\Db(Y)(-1), p^*\Db(Y) \rangle & \text{if $n$ is even}\\
\langle p^*\Db(Y)(-1) \rangle & \text{if $n$ is odd.}
\end{cases}
$$
Then the decomposition \eqref{eq:lefsch-of-Proj} is a Lefschetz
decomposition
\begin{equation}\label{eq:lefsch-explicit}
\Db(M) = \langle \cat{A}_0, \cat{A}_1(2), \dotsc, \cat{A}_{m}(2m) \rangle,
\end{equation}
with respect to the relative double Veronese embedding $f: M = \PP(E)
\to \PP(S_2 E) = : \PP(V)$, as $f^* \ko_{\PP(S^2 E)/Y} (1) \isom
\ko_{\PP(E)/Y}(2)$.  Here, we use the submodule $S_2E$ of symmetric tensors for defining the relative Veronese embedding
(working even in characteristic 2), given our convention for defining
projective bundles, see \S\ref{sec:Quadratic_forms} for more details.
Recall the canonical isomorphism $S^2E\dual \isom (S_2E)\dual$.

\begin{definition}
Let ${\cali Q} \subset \PP(E) \times_Y \PP((S_2E)\dual) = \PP(E)
\times_Y \PP(S^2E\dual)$ be the universal hyperplane section with
respect to $f$, then we will refer to the projection
$$
\pi : {\cali Q} \to \PP(S^2E\dual),
$$
as the \linedef{universal relative quadric fibration} in $\PP(E)$.
\end{definition}

Roughly speaking, this is equivalent to the fact that the double
Veronese embedding $f$ is defined by the full linear systems of
quadrics on $\PP(E)$, hence the universal hyperplane section carries
the universal family of relative quadrics in $\PP(E)$. Indeed, if a
section $s : Y \to \PP(S^2E\dual)$ of $\pi$ corresponds to a line
subbundle $L\dual \subset S^2E\dual \isom (S_2E)\dual$, then the
pullback $s^*{\cali Q} \to Y$ is the quadric fibration associated to
the quadratic form defined by Lemma~\ref{lem:quad}.  Thus $\pi: {\cali
Q} \to \PP(S^2 E\dual)$ is a flat quadric fibration.  Let $\kc_0$ be
its even Clifford algebra.
\begin{theorem}\label{thm:relative-hpd-fot-quadrics}
Let $Y$ be a smooth projective scheme and $E$ a vector bundle.  Then
$\PP(E)$ and the noncommutative variety $(\PP(S^2 E\dual),\kc_0)$ form
an HP dual pair over $Y$ with respect to the Lefschetz decomposition
\eqref{eq:lefsch-explicit}.
\end{theorem}
\begin{proof}
The pair $\PP(E)$, $(\PP(S^2 E\dual), \kc_0)$ forms a HP dual pair
thanks to the semiorthogonal decomposition of the universal quadric
fibration.  
\end{proof}

Now we describe some consequences of Theorem \ref{thm:HPD} in the case
of flat quadric fibrations.  Let $(E_i,q_i,L)$ be a finite set of
primitive generically (semi)regular quadratic forms.  Denote by
$L\dual \to S^2 E\dual$ the $\OO_Y$-submodule generated by the line
subbundles $L_i\dual$.  Then the linear section $M_{L\dual} = \PP(E)
\times_{\PP(S_2E)} \PP(L\dual{}^{\perp})$ (which we denote by $X$) is
a relative intersection of the quadric fibrations $Q_i \to Y$ in
$\PP(E)$. Indeed, the projection map $\pi : X \to Y$ has fibers the
intersection of the fibers of $Q_i \to Y$ in the projective space
given by the fibers of $\PP(E)$.  On the other hand, the linear
section $N_{L\dual} = \PP(S^2E\dual) \times_{\PP(S^2E\dual)}
\PP(L\dual)$ (which we denote by $S$) is precisely $\PP(L\dual)
\subset \PP(S^2 E\dual)$.  Then the restriction $\kc_0|_{\PP(L\dual)}
= \kc_0 \boxtimes_{\OO_{\PP(S^2E\dual)}}\ko_{\PP(L\dual)}$ (which we
shamelessly denote by $\kc_0$) of the even Clifford algebra of ${\cali
Q} \to \PP(S^2E\dual)$ to $\PP(L\dual)$ is then isomorphic to the even
Clifford algebra of the corresponding linear span (see
Definition~\ref{linearspan}) quadric fibration $Q \to \PP(L\dual)$
associated to the $Q_i \to Y$.
We assume that $L\dual \isom \oplus_i L_i\dual$ and that this relative
intersection is complete. Notice that we have $\omega_{X/Y} =
\ko_{X/Y}(2m-n)$.  We will record the following application of
Theorems~\ref{thm:HPD} and \ref{thm:relative-hpd-fot-quadrics} for
future use.

\begin{theorem}[HP duality for quadric fibration intersections]
\label{hpd}
Let $Y$ be a smooth scheme, $Q \to S$ a linear span of $m$ quadric
fibrations of relative dimension $n-2$ over $Y$, and $X \to Y$ their
relative complete intersection.  Let $\kc_0$ be the even Clifford
algebra of $Q \to S$.
\begin{enumerate}
\item \label{hpd_Fano} 
If $2m < n$, then the fibers of $X \to Y$ are Fano
and relative homological projective duality yields
\begin{equation*}
\Db(X) = \langle \Db(S,\kc_0), \pi^* \Db(Y)(1) \dotsc \pi^* \Db(Y)(n-2m)\rangle.
\end{equation*}

\item \label{hpd_CY.2}
If $2m=n$ then the fibers of $X \to Y$ are generically
Calabi--Yau and relative homological projective duality yields
\begin{equation*}
\Db(X) \simeq \Db(S,\kc_0).
\end{equation*}

\item \label{hpd_CY.3}
 If $2m > n$, then the fibers of $X \to Y$ are generically of
general type and there exists a fully faithful functor $\Db(X) \to
\Db(S,\kc_0)$ with explicitly describable orthogonal complement.
\end{enumerate}

\end{theorem}

\begin{remark*}
We note that the claims on the Kodaira dimension of the fibers
contained in \eqref{hpd_CY.2} and \eqref{hpd_CY.3} hold only for the
generic fibers since it can change over a closed subscheme of the
base. Just think about a family of plane cubics with a singular
central fiber.
\end{remark*}

\subsection{Semiorthogonal decompositions, representability, and rationality}
\label{subsec:categor-repre}

Since the pioneering work of Bondal--Orlov
\cite{bondal_orlov:semiorthogonal}, semiorthogonal decompositions have
proved themselves to be a very useful tool in studying Fano varieties
and their birational properties, see \cite{kuznetsov:v14} and
\cite{kuznetsov:v12} for example.  In particular, semiorthogonal
decompositions should encode, in a categorical way, the geometric
information contained in the intermediate jacobian of a Fano
threefold, see \cite{bolognesi_bernardara:representability}.  Unlike
the theory of intermediate jacobians, this categorical approach can be
extended to higher dimensions.  We now introduce the notion of
categorical representability, which places our results in a general
conjectural framework for the study of rationality via semiorthogonal
decompositions.

\begin{definition}[{\cite[Def.~2.3]{bolognesi_bernardara:representability}}]
\label{def-rep-for-cat}
A triangulated category $\cat{T}$ is \linedef{representable} in
dimension $m$ if it admits a semiorthogonal decomposition
$$
\cat{T} = \langle \cat{A}_1, \dotsc, \cat{A}_l \rangle,
$$
such that for each $1 \leq i \leq l$, there exists a smooth projective
variety $M_i$ with $\dim\, M_i \leq m$, such that $\cat{A}_i$ is
equivalent to an admissible subcategory of $\Db(M_i)$.
\end{definition}

\begin{remark}
\label{connectd}
Notice that in the definition, we can assume the categories
$\cat{A}_i$ to be indecomposable and also the varieties $M_i$ to be
connected. Indeed, the derived category $\Db(M)$ of a scheme $M$ is
indecomposable if and only if $M$ is connected (see
\cite[Example 3.2]{bridgeland:triangulated}).
\end{remark}

\begin{definition}[{\cite[Def. 2.4]{bolognesi_bernardara:representability}}]
\label{repre}
Let $X$ be a smooth projective variety of dimension $n$. We say that
$X$ is \linedef{categorically representable} in dimension $m$ (or
equivalently in codimension $n-m$) if $\Db(X)$ is representable in
dimension $m$.
\end{definition}

In the sequel, we develop general classes of threefolds and fourfolds
with a fibration in intersection of quadrics over $\PP^1$, where
categorical representability is intimately related to rationality.

\section{Genus 1 fibrations}\label{elliptic}

In this section, we introduce a warm-up example:\ a genus 1 fibration
$X \to Y$ (by \linedef{genus 1 fibration} we mean a proper flat
surjective map whose generic fibers are smooth genus 1 curves) over a
smooth projective variety $Y$, obtained as the generic (see
Definition~\ref{def:generic-fibration}) relative complete intersection
of two quadric surface fibrations.

\subsection{Clifford algebras of genus 1 fibrations}

Let $Q_1 \to Y$ and $Q_2 \to Y$ be a generic pair of quadric surface
fibrations defined by quadratic forms $(E,q_1,L_1)$ and $(E,q_2,L_2)$
over a scheme $Y$.  Consider the $\PP^1$-bundle $S=\PP(L_1\dual \oplus
L_2\dual) \to Y$, the linear span quadric fibration $Q \to S$ (see
Definition \ref{linearspan}), and its associated even Clifford algebra
$\kc_0$.  Then the generic relative complete intersection $X \to Y$,
of $Q_1\to Y$ and $Q_2\to Y$ in $\PP(E)$, is a genus 1 fibration and
$\CliffAlg_0$ gives rise to an Azumaya algebra $\CliffB_0$ on the
discriminant cover $T \to S$ (see Proposition
\ref{prop:simple_azumaya}), whose Brauer class we denote by $\beta \in
\Br(T)$.

By the genericity hypothesis, the discriminant divisor $D$ of $Q \to
S$ is smooth. It intersects each fiber of the ruled surface $S \to Y$
in a closed point of degree 4.  We note that the composite map $T \to
S \to Y$ is thus also a genus 1 fibration, since every generic fiber
is a double cover of the projective line branched at 4 points.  We
point out that for a geometric point $y$ of $Y$ of characteristic
zero, the fiber $X_y$ is a principal homogeneous space under the
jacobian $J(T_y)$, see \cite[Thm.~4.8]{reid:thesis}.

\begin{theorem}\label{thm:elliptic-fibr}
Let $X \to Y$ be a genus 1 fibration arising as the generic relative
complete intersection of two quadric surface fibrations.  Then there
is an equivalence $\Db(X) \simeq \Db(T, \beta)$.  Moreover, $\beta \in
\Br(T)$ is trivial if and only if the genus 1 fibration $X \to Y$
admits a section.
\end{theorem}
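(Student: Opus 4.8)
The plan is to deduce both assertions from the tools already assembled: the HP-duality package for quadric-fibration intersections (Theorem~\ref{hpd}), the Morita invariance of the even Clifford algebra under quadric reduction (Corollary~\ref{morita}), and the rank-4 isotropy criterion (Theorem~\ref{thm:well-known}), together with the Amer--Brumer theorem (Theorem~\ref{amer-brumer}).

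For the equivalence $\Db(X) \simeq \Db(T,\beta)$, observe that here $E$ has rank $n = 4$ and we are intersecting $m = 2$ quadric fibrations, so $2m = n$: this is precisely the Calabi--Yau case \eqref{hpd_CY} of Theorem~\ref{hpd}. Hence relative homological projective duality yields an equivalence $\Db(X) \simeq \Db(S,\kc_0)$, where $Q \to S$ is the linear span quadric fibration (a quadric surface fibration over the ruled surface $S = \PP(L_1 \oplus L_2)$) and $\kc_0$ its even Clifford algebra. Since the pair is generic, $Q \to S$ has simple degeneration along a smooth discriminant divisor $D \subset S$; by Remark~\ref{generic_remarks} and Proposition~\ref{prop:simple_azumaya}, the $\OO_T$-algebra $\kb_0$ associated to $\kc_0$ via the discriminant cover $f : T \to S$ is Azumaya, with Brauer class $\beta$. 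As $S$ is regular and $D$ is regular, $T$ is regular; the restriction-of-scalars along $f$ identifies $\Coh(S,\kc_0)$ with $\Coh(T,\kb_0)$, and since $\kb_0$ is Azumaya this is $\Coh(T,\beta)$. Passing to bounded derived categories gives $\Db(S,\kc_0) \simeq \Db(T,\beta)$, hence $\Db(X) \simeq \Db(T,\beta)$.

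For the second statement, the key identity is: $X \to Y$ has a section if and only if $Q \to S$ has a rational section if and only if $\beta = 0$ in $\Br(T)$. The first equivalence is Amer--Brumer (Theorem~\ref{amer-brumer}): a section of $X \to Y$ produces a section of $Q \to S$, and conversely a rational section of $Q \to S$ produces a rational section of $X \to Y$, which extends to a genuine section because $X \to Y$ is a smooth proper (genus one) fibration over $Y$ and sections of such fibrations extend over codimension-one points. (One should be slightly careful here: over a general base $Y$ one needs $X$ proper over $Y$ and $X$ regular, which holds under the genericity hypotheses; the cleanest case is $Y$ a smooth curve, where the valuative criterion applies directly.) The second equivalence is exactly Theorem~\ref{thm:well-known} applied to the quadric \emph{surface} fibration $Q \to S$ with simple degeneration along the smooth divisor $D$ over the regular integral scheme $S$: its discriminant cover is $T \to S$, and $\kb_0 \in \Br(T)$ is trivial if and only if $\pi : Q \to S$ admits a rational section.

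The main obstacle, I expect, is not any deep input but the bookkeeping of what ``has a section'' means on each level and matching the hypotheses of Theorems~\ref{amer-brumer} and~\ref{thm:well-known} to the geometric setup --- in particular, verifying that a rational section of $X \to Y$ upgrades to an everywhere-defined section (which is why the statement is comfortably true when $Y$ is a smooth curve), and checking that the Brauer class $\beta$ arising from the HP-dual description of $\Db(X)$ is literally the same class to which Theorem~\ref{thm:well-known} applies. The latter is immediate once one notes that both are defined as the Brauer class of $\kb_0$ on $T$, the Azumaya algebra attached to the even Clifford algebra of the \emph{same} linear span quadric surface fibration $Q \to S$.
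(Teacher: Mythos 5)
Your proposal is correct and follows essentially the same route as the paper's proof: HP duality in the Calabi--Yau case ($2m=n=4$) gives $\Db(X)\simeq\Db(S,\kc_0)$, the affine discriminant cover $T\to S$ with the Azumaya algebra $\kb_0$ gives $\Db(S,\kc_0)\simeq\Db(T,\beta)$, and the section statement follows by chaining Amer--Brumer with Theorem~\ref{thm:well-known}. Your caveat about upgrading a rational section of $X\to Y$ to an actual section is a reasonable point of care; the paper makes the same ``section iff rational section'' assertion without further elaboration.
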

\begin{proof}
Relative homological projective duality (Theorem
\ref{hpd}\eqref{hpd_CY.2}) yields an equivalence $\Db(X) \simeq
\Db(S,\kc_0)$.  Since $T \to S$ is affine, we have an equivalence
$\Db(S,\kc_0) \simeq \Db(T,\beta)$, since $\beta$ is the class in the
Brauer group corresponding to the Azumaya algebra $\CliffB_0$.

As for the second assertion, the genus 1 fibration $X \to Y$ admits a
section if and only if it admits a rational section if and only if (by
the Amer--Brumer Theorem~\ref{amer-brumer}) $Q \to S$ has a rational
section if and only if (by Lemma~\ref{thm:well-known}) $\beta \in
\Br(T)$ is trivial.
\end{proof}

On the other hand---as we shall also detail for a higher dimensional
case in \S\ref{section:4fold}---the genus 1 fibration $T \to Y$ is the
relative moduli space of spinor bundles on $X \to Y$ (actually,
restrictions to $X$ of spinor bundles on the quadrics of the pencil,
see \cite[Ch.~2]{bondal_orlov:semiorthogonal} for details). The
obstruction to the existence of a universal class on $X \times_Y T$ is
given by a Brauer class $\omega$.  Combining results from
\cite{bondal_orlov:semiorthogonal} and \cite{caldararu:thesis}, one
gets an equivalence $\Db(T, \omega) \simeq \Db(X)$ by the
Fourier--Mukai functor whose kernel is the $\omega$-twisted universal
sheaf on $X \times_Y T$.

Moreover, we note that a similar result was already proved by
C\u{a}ld\u{a}raru \cite[\S4]{caldararu:thesis}.  Indeed, suppose that
$X \to Y$ is a smooth elliptic fibration and let $J \to Y$ be the
relative jacobian. There is an element $\alpha$ in $\Br(J)$ giving the
obstruction to $J \to Y$ to be a fine moduli space for degree zero
relative line bundles on $X \to Y$.  By considering the
$\alpha$-twisted Poincar\'e line bundle on $X \times_Y J$, one gets an
equivalence $\Db(X) \simeq \Db(J,\alpha)$.  That the triviality of
$\alpha$ is equivalent to the existence of a section of $X \to Y$ is
detailed in \cite[\S4]{caldararu:thesis} and arises from the fact that
the image of $\alpha$ under the canonical map $\Br(J) \to \Sha(J/Y)$
coincides with the class of $X\to Y$.

\begin{question}[See Question~\ref{question4}]
\label{question_elliptic}
Let $X \to Y$ be a smooth genus 1 fibration arising as the generic
relative complete intersection of two quadric surface fibrations.  Let
$J \to Y$ be the jacobian fibration and $T \to S \to Y$ the
discriminant cover of the linear span.
Are $J \to Y$ and $T \to Y$ related (or at least their Brauer groups)?
If so, are the classes $\alpha$, $\beta$, and $\omega$ related (or
possibly coincide)?
\end{question}

\section{Del Pezzo fibrations of degree four}\label{section:delpezzo}

Del Pezzo fibrations over $\PP^1$ form a very important class of
varieties in the classification of smooth projective threefolds with
negative Kodaira dimension.  Notably, they form one of the three
classes to which any such threefold can be geometrically birationally
equivalent. 
In this section, whenever we use the results from
\cite{alekseev:dP4}, we need to work over $\CC$. We will state when a broader generality for the field of definition is possible.

\subsection{Minimal Del Pezzo fibrations}

\begin{definition}
A 3-fold $X$ is a minimal Del Pezzo fibration if there exists a proper flat morphism $\delta: X
\to C$ to a smooth projective curve $C$ whose generic fiber is a Del
Pezzo surface and $\Pic(X) =\delta^* \Pic (C) \oplus \ZZ$.
\end{definition}

In this section, we employ our techniques to study quartic Del Pezzo
fibrations $X \to C$ over a curve.  Over an algebraically closed
field, such fibrations can always be realized as relative complete
intersections of two three-dimensional quadric fibrations over $C$.
Our study will be mainly devoted to the problem of rationality in the
case where $C = \PP^1$.  In fact, the rationality of such a Del Pezzo
fibration only depends on the Euler characteristic, as shown by
Alexeev \cite{alekseev:dP4} and Shramov \cite{shramov:dp4}.  Starting
from the classical point of view of \cite{alekseev:dP4} and the
categorical tools developed in
\cite{bolognesi_bernardara:conic_bundles} and
\cite{bolognesi_bernardara:representability}, we lead up to a proof of
Theorem~\ref{thm:1}, establishing a purely categorical criterion of
rationality of quartic Del Pezzo fibrations over $\PP^1$.

Let $Y$ be a scheme over a field $k$ and $\pi: X \to Y$ be a quartic
Del Pezzo fibration with $X$ be smooth over $k$. Then there exists a
vector bundle $E$ of rank 5 such that $X \subset \PP(E)$ is the
complete intersection of two quadric fibrations $Q_1 \to Y$ and $Q_2
\to Y$ of relative dimension 3 (see \cite{shramov:dp4}).  We consider
here the generic case, according to
Definition~\ref{def:generic-fibration}. We get a $\PP^1$-bundle over
$S \to Y$, and a fibration $Q \to Y$ of relative dimension 3.
Moreover, $\omega_{X/Y} = \ko_{X/Y}(-1)$ and thus, by
Proposition~\ref{decompo-of-relative}, for all integers $j$, we have a
semiorthogonal decomposition
\begin{equation}\label{deco-delpe}
\Db(X) = \langle \cat{A}^j_X, \Db(Y)(j) \rangle.
\end{equation}
We observe moreover that $\cat{A}^j_X$ does not contain $\Db(Y)(l)$ for any $l\neq j$. To simplify notation let us denote $\cat{A}_X = \cat{A}_X^1$. 
Letting $\kc_0$ be the even Clifford algebra of $Q \to S$, then
relative homological projective duality (Theorem
\ref{hpd}\eqref{hpd_Fano}) provides an equivalence $\cat{A}_X \simeq
\Db(S,\kc_0)$.

\subsection{Reduction by hyperbolic splitting vs.\ Alexeev's
construction}
\label{subsec:Alexeev}

In the case where $Y=C$ is a smooth projective complex curve, Alexeev
\cite{alekseev:dP4} has shown that $X$ is birational to a conic bundle
$Q''$ over a ruled surface over $S' \to C$. Part of this birationality
result is based on the construction of a particular smooth section of
$X \to C$. Hence, by the (easy direction of the) Amer--Brumer theorem
(Theorem~\ref{amer-brumer}), there exists also a smooth section of $Q
\to S$, along which we can perform quadric reduction to obtain a conic
bundle $Q' \to S$.  The mere existence of a smooth section is
guaranteed by Lemma~\ref{lem:C_2}.  Even though Alexeev's construction
is not given in terms of quadric reduction, but by explicit birational
transformations, we will show that $S' = S$, that the conic bundle
$Q'' \to S$ is birational to the $Q' \to S$, and that their associated
Clifford algebras are Morita equivalent.

\begin{definition}
A {\em standard conic bundle} $\pi: Z \to T$ over a surface $T$ is a
proper flat surjective morphism whose geometric fibers are isomorphic
to plane conics, such that for any irreducible curve $B \subset T$ the
surface $\pi^{-1}(B)$ is irreducible (this second condition is also
called relative minimality).
\end{definition}
 
The discriminant locus of a standard conic bundle is a curve $D
\subset T$, which can be possibly empty, with at most double points
\cite[Prop.~1.2]{beauvillejaco}.  The fiber of $\pi$ over a smooth
point of $D$ is the union of two lines intersecting in a single point,
while the fiber over a node is a double line.  Recall that any conic
bundle is birationally equivalent to a standard one via elementary
transformations \cite{sarkisov-bira}, and that, if $T$ is a rational
surface, then the discriminant double cover $\tilde{D} \to D$
identifies the isomorphism class of the associated Clifford algebra
$\kc_0$ \cite[Lemma~3.2]{bolognesi_bernardara:conic_bundles}.

Recall that $\pi:X \to C$ is the complete intersection of two three
dimensional quadric fibrations $Q_1 \to C$ and $Q_2 \to C$ given by
line bundle valued quadratic forms on a rank 5 vector bundle $E$ over
$C$. Moreover, $X$ is embedded in $\PP(E)$ via its relative
anticanonical system $-\omega_{X/C}$, that is $\omega_{X/C} =
\ko_{X/C}(-1) = \ko_{\PP(E)/C}(-1)|_{X}$. Hence, the anticanonical
bundle corresponds to the relative hyperplane section of $X$ over $C$.

One of the main ideas of \cite{alekseev:dP4} is summarized in the
following.

\begin{proposition}\label{alexbundle}
There exists a standard conic bundle $Q''$, over a ruled surface $S'\to C$, which is birational to $X$.
\end{proposition}
\begin{proof}
We will only give a sketch of the construction; for complete proofs
see \cite{alekseev:dP4}. Let $s$ be a smooth section of $\pi$, and
consider the blow-up $\eta:\widetilde{X} \to X$ along $s$, and the
composition $\tilde{\pi}:= \pi \circ \eta:\widetilde{X} \to C$, which
is a flat two-dimensional fibration.  The exceptional divisor of the
blow-up is a ruled surface $\widetilde{F} \to C$.  The linear system
$|-\omega_{X/C}-s|$ gives the relative projection of $X$ off $s$ over
$C$.  This gives a rational map $X \dashrightarrow \PP(\kf)$, where
$\PP(\kf) \to C$ is a projective bundle of relative dimension
3. Resolving this rational map by blowing up $s$ gives a closed
embedding $\widetilde{X} \hookrightarrow \PP(\kf)$, corresponding to
the linear system $| -\omega_{X/C}-\widetilde{F}| =
|-\omega_{\tilde{X}/C}|$, so that $\tilde{X}$ is relatively
anticanonically embedded in $\PP(\kf)$.

For any point $c$ of $C$, considering the projective anticanonical
embedding $X_c \subset \PP(E_c) \simeq \PP^4$, we can think of $X_c$ as the blow up
of a projective plane along 5 points in general position.
Alexeev shows that the smooth section $s$ can be chosen such that, over
all but a finite number of points of $C$, the point cut out by $S$ on the fiber $X_c$
is in general position with respect to those 5 points \cite{alekseev:dP4}. This means that
the blow-up of $X_c$ along that point is a degree 3 del Pezzo surface. It follows that all but a finite number of 
the fibers of $\widetilde{X} \to C$ are cubic surfaces anticanonically embedded in the corresponding fiber of $\PP(\kf) \to C$. 

Consider the linear system $|L_2|=|-\omega_{\widetilde{X}/C} - \wt{F}| = |\eta^* (-\omega_{X/C}) - 2\wt{F}|$ on $\widetilde{X}$.
On each fiber, $\widetilde{F}$ is a line, while
for all but a finite number of points of $C$, the anticanonical class corresponds to the hyperplane section.
So the linear system $|L_2|$ forms, fiber-wise in all but a finite number of fibers, a pencil of residual conics.
Working by birational transformations along the fibers
which are not cubic surfaces, Alexeev finally describes a standard conic bundle $Q''$, over a ruled surface
$S' \to C$. Denote by $|L_1|:=|-\omega_{X/C}-2s|$, and notice that
$\eta^\ast L_1 = L_2$, so that $S'=\PP(\pi_*(-\omega_{X/C}-2s))$. The conic
bundle $Q'' \to S'$ is standard and birational to $X$ \cite{alekseev:dP4}.

On the other hand, consider the quadric fibration $Q \to S$, the
linear span of the quadric pencils $Q_1 \to C$ and $Q_2 \to C$.  In
particular, $S \to C$ is a $\PP^1$-bundle.  Since $s$ is a smooth
section of $X$, it provides a smooth section of $Q$ and hence we get,
by hyperbolic splitting, a conic bundle $Q' \to S$. By
\cite[Thm.~3.2]{colliot-thelene_sansuc_swinnerton-dyer:quadrics_I},
$X$ is birational to $Q'$, so $Q'$ is birational to $Q''$. 
\end{proof}

\begin{theorem}\label{conicbundles}
The two ruled surfaces $S$ and $S'$ are isomorphic, and the two conic bundles $Q'$ and $Q''$ are isomorphic over the compement of a finite number of fibers.
\end{theorem}

\begin{proof}
Consider once again the linear system $|L_2|
= |-\omega_{X/C} - 2s|$ on $X$. Since $\omega_{X/C}=\ko_{X/C}(-1)$,
this linear system is the pencil of vertical hyperplanes in $\PP(E)$
which are tangent to $X$ at $s$.  Let us denote by $Q_{\lambda} \to C$
a quadric fibration in the quadric pencil $Q \to S$. That is, we are
fixing a section $\lambda$ of the $\PP^1$-bundle $S \to C$ and we are
considering the corresponding 3-dimensional quadric fibration.  In
particular, for each $\lambda$, $s$ is contained in $Q_{\lambda}$.  If
we let $\lambda$ vary in the pencil and take the relative tangent
hyperplanes to $s$ in each $Q_{\lambda}$ in $Q$, these give all the
relative tangent hyperplanes to $X$ at $s$. Hence they cut out on $X$
the full linear system $|L_2|$.

In order to prove our claim, let us first explain the relation between the relative pencil of quadrics $Q \to S$ in $\PP(E) \times \PP^1$
and relative conics, residual to $\wt{F}$, that compose the linear system $|L_1|$ on $\widetilde{X}$. Take again a fixed 3-dimensional quadric fibration $Q_{\lambda} \to C$ and the tangent hyperplane $T_{\lambda}$ to it at $s$. The
surface $T_{\lambda}\cap X$ (which is an element of $|L_2|$) has a fibration in quartic curves $G_{\lambda} \to C$, induced by restriction under the inclusion $G_\lambda\subset Q_{\lambda}$. Now, $s$ is contained in $G_\lambda$ and all the quartic curves are singular at the section $s$. On the other hand, $T_{\lambda}$
cuts out a family of quadric cone surfaces $\wt{Q}_{\lambda} \to C$ from
$Q_{\lambda} \to C$ that contains $G_{\lambda}$. We denote $\wt{G}_{\lambda} \to C$ the family of bases of the cone.

Consider now the rational projection $\PP(E) \dashrightarrow \PP(\kf)$
from the center $s$.  We recognize the rational map that sends $X$
onto $\widetilde{X}$, the rational inverse of $\eta$. It sends
$T_{\lambda}$ to a family of 2-planes $\PP^2_{\lambda} \to C$
containing $\wt{F}$. It sends $Q_{\lambda}$ to its base conic
$\wt{G}_{\lambda}\subset \PP^2_{\lambda}$. It sends $G_{\lambda}$
birationally onto $\wt{G}_{\lambda}$ (note in fact that we are
projecting fiber-wise from the double point of quartic curve, hence
the image has fiber-wise degree two). We note that $\wt{G}_{\lambda}$
is contained in $\widetilde{X}$ and that it is a divisor of the linear
system of conics $|L_1|$. If $\widetilde{X}_c$ is a cubic surface in
$\PP(\kf_c) = \PP^3$, we get the residual conics with respect to the
exceptional line.  Furthermore, by construction, $\wt{G}_{\lambda}$ is
the quadric reduction of $Q_{\lambda}$ with respect to the section
$s$.

Finally, we get that $S= \PP(\pi_*(L_1))=\PP(\pi_*(-\omega_{X/C}-2s)) =S'$. On the fibers where $s$ cuts out a general point, the conic pencils $Q' \to S$ and $Q'' \to S$ coincide.
\end{proof}

\subsection{Clifford algebras associated to quartic Del Pezzo
fibrations and rationality}

The previous constructions give three quadric fibrations $Q \to S$,
$Q' \to S$, and $Q'' \to S$.  Now we will show that their even
Clifford algebras are Morita equivalent.

Let $\kc_0$ (resp.\ $\kc_0'$) be the even Clifford algebra associated
of $Q \to S$ (resp.\ $Q' \to S$).  The following is then an immediate
consequence of the Morita invariant of the Clifford algebra under
hyperbolic splitting (Corollary~\ref{morita}) and of relative
homological projective duality (Theorem \ref{hpd}\eqref{hpd_Fano}).

\begin{corollary}
\label{equicat}
Let $\pi:X\to C$ be a generic quartic Del Pezzo fibration over a
smooth proper curve over a field of characteristic $\neq 2$.  Then the
categories $\cat{A}_X$, $\Db(S,\kc_0)$, and $\Db(S,\kc_0')$ are
equivalent.
\end{corollary}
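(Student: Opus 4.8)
The plan is to concatenate the two equivalences furnished by the tools developed above, so the proof amounts to checking hypotheses. Recall the setup: by genericity (Definition~\ref{def:generic-fibration}), $X \subset \PP(E)$ is the relative complete intersection of two quadric surface fibrations $Q_1, Q_2 \to C$ with $E$ of rank $5$ on $C$; the associated linear span quadric fibration $Q \to S$ lives over the $\PP^1$-bundle $S = \PP(L_1 \oplus L_2) \to C$, has simple degeneration along a smooth discriminant divisor, and has even Clifford algebra $\kc_0$. Being a $\PP^1$-bundle over a smooth projective curve, $S$ is a regular integral (indeed smooth projective) surface. The regular section $s$ of $Q \to S$ and its quadric reduction $Q' \to S$, a conic bundle of relative dimension $1$, have been fixed just above.

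The first equivalence $\cat{A}_X \simeq \Db(S,\kc_0)$ has already been recorded (before \S\ref{linsyst}): with $\rk E = 5$ and $m = 2$ quadrics one has $2m = 4 < 5$, so the fibers of $X \to C$ are the Fano surfaces of degree four and Theorem~\ref{hpd}\eqref{hpd_Fano} applies; comparing its output $\Db(X) = \langle \Db(S,\kc_0), \pi^*\Db(C)(1) \rangle$ with the decomposition \eqref{deco-delpe} for $j = 1$ identifies $\cat{A}_X = \cat{A}_X^1$ with $\Db(S,\kc_0)$. The second equivalence $\Db(S,\kc_0) \simeq \Db(S,\kc_0')$ is precisely Corollary~\ref{morita} applied to the quadric reduction $Q' \to S$ of the simply degenerate fibration $Q \to S$ along the regular section $s$ over the regular integral scheme $S$. (Here $Q \to S$ has odd relative dimension $3$, so one lands in the odd-rank case of Theorem~\ref{thm:hyperbolic_splitting_morita}, where the Brauer-equivalence statement is phrased on a Deligne--Mumford stack rather than on a discriminant cover; only the derived-categorical conclusion $\Db(S,\kc_0) \simeq \Db(S,\kc_0')$ is needed.) Chaining, $\cat{A}_X \simeq \Db(S,\kc_0) \simeq \Db(S,\kc_0')$, which gives the asserted pairwise equivalences.

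There is no genuine obstacle: as stated, the result is an immediate consequence of relative homological projective duality (Theorem~\ref{hpd}\eqref{hpd_Fano}) and of the Morita invariance of the even Clifford algebra under quadric reduction (Corollary~\ref{morita}). The only steps requiring (minor) care are the verification of the hypotheses of Corollary~\ref{morita} --- simple degeneration of $Q \to S$, which is part of genericity, and the regularity and integrality of $S$, which are clear --- and, upstream of those, the construction of the regular section $s$, for which Lemma~\ref{lem:C_2}, the Amer--Brumer Theorem~\ref{amer-brumer}, and the smoothness of $X$ (via Remark~\ref{regsect_remark}) are invoked.
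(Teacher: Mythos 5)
Your proof is correct and follows the paper's own argument exactly: the paper likewise obtains Corollary~\ref{equicat} by chaining the HPD equivalence $\cat{A}_X \simeq \Db(S,\kc_0)$ from Theorem~\ref{hpd}\eqref{hpd_Fano} with the Morita equivalence $\Db(S,\kc_0)\simeq\Db(S,\kc_0')$ from Corollary~\ref{morita}, the regular section being supplied via Lemma~\ref{lem:C_2}, Remark~\ref{regsect_remark}, and the Amer--Brumer Theorem~\ref{amer-brumer}. Your additional hypothesis-checking (regularity and integrality of $S$, odd relative dimension of $Q\to S$ putting one in the stacky case of Theorem~\ref{thm:hyperbolic_splitting_morita}) is accurate and simply makes explicit what the paper leaves implicit.
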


Now recall that the even Clifford algebra of a conic bundle gives rise
to a (generally ramified) 2-torsion element of $\Br(k(S))$.  Notably,
if two conic bundles coincide on an open subset of $S$ then their even
Clifford algebra define the same Brauer class.  Let $\kc_0''$ be the
even Clifford algebra of $Q'' \to S$.  Considering Theorem \ref{conicbundles}, the following is now immediate.

\begin{corollary}
\label{equicat2}
The category $\cat{A}_X$ is equivalent to $\Db(S,\kc_0'')$.
\end{corollary}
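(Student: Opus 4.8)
The plan is to deduce the corollary from Corollary~\ref{equicat}, which already provides equivalences $\cat{A}_X \simeq \Db(S,\kc_0) \simeq \Db(S,\kc_0')$, by proving that $\Db(S,\kc_0') \simeq \Db(S,\kc_0'')$. Since $\kc_0'$ and $\kc_0''$ are the even Clifford algebras of the conic bundles $Q' \to S$ and $Q'' \to S$, it suffices to show that $\kc_0'$ and $\kc_0''$ are Morita $S$-equivalent, and then appeal to Theorem~\ref{thm:morita}; the chain $\cat{A}_X \simeq \Db(S,\kc_0) \simeq \Db(S,\kc_0') \simeq \Db(S,\kc_0'')$ then closes up.

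First I would record that, as established in the discussion above, $Q''$ and $Q'$ agree over the dense open $U = S \bslash F$, where $F$ is the union of the finitely many rulings of $S \to C$ modified in Alexeev's standardization; hence $\kc_0'|_U \cong \kc_0''|_U$ as $\OO_U$-algebras, and in particular the two even Clifford algebras determine the same $2$-torsion class in $\Br(k(S))$. I would then upgrade this birational agreement to an equality of Morita $S$-classes, running the argument of the proof of Theorem~\ref{thm:hyperbolic_splitting_morita} in the odd-rank case. The conic bundle $Q' \to S$, being the quadric reduction of the linear span quadric fibration $Q \to S$, has simple degeneration along the same divisor as $Q$ by Corollary~\ref{cor:degeneration_reduction}, and by the genericity hypothesis (Definition~\ref{def:generic-fibration}) this divisor $D$ is smooth. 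The conic bundle $Q'' \to S$, being a standard conic bundle, has simple degeneration along its discriminant $D''$, and since $Q''|_U = Q'|_U$ while $D$ contains no ruling of $S \to C$ (the discriminant is finite over $C$), one gets $D'' = D$. Invoking the stack-theoretic description for odd-rank quadratic forms with simple degeneration recalled in the proof of Theorem~\ref{thm:hyperbolic_splitting_morita} (from Kuznetsov \cite[\S3.5--3.6]{kuznetsov:quadrics}), there is a regular integral Deligne--Mumford stack $t : \mathcal{S} \to S$ with coarse space $S$ and Azumaya algebras $\kb_0'$, $\kb_0''$ on $\mathcal{S}$ with $t\pushforward\kb_0' = \kc_0'$ and $t\pushforward\kb_0'' = \kc_0''$. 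These restrict to the same Azumaya algebra over the dense open substack $t\inv(U)$; since $\mathcal{S}$ is regular and integral, $\Br(\mathcal{S})$ injects into the Brauer group of its function field, so $\kb_0'$ and $\kb_0''$ are Brauer equivalent, hence Morita $\mathcal{S}$-equivalent, hence $\kc_0' = t\pushforward\kb_0'$ and $\kc_0'' = t\pushforward\kb_0''$ are Morita $S$-equivalent, giving $\Db(S,\kc_0') \simeq \Db(S,\kc_0'')$.

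The main obstacle is the equality $D'' = D$ of discriminants, i.e.\ checking that Alexeev's standardization, which alters $Q'$ only over the finitely many bad rulings, does not enlarge the degeneration divisor --- or, should it enlarge it, that the new degenerate fibers are split, so that the extra ramification is trivial and the Morita $S$-class of the even Clifford algebra is unaffected. Granting this, the remaining steps are formal consequences of relative homological projective duality (Theorem~\ref{hpd}), Corollary~\ref{equicat}, the stacky picture from the proof of Theorem~\ref{thm:hyperbolic_splitting_morita}, and the injectivity of the Brauer group of a regular integral Deligne--Mumford stack into that of its generic point.
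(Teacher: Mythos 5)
Your proposal follows the same route the paper has in mind: chain through Corollary~\ref{equicat}, then deduce $\Db(S,\kc_0')\simeq\Db(S,\kc_0'')$ from the coincidence of $Q'$ and $Q''$ over a dense open $U\subset S$.  The paper itself gives no argument beyond the remark that two conic bundles agreeing on an open subset have even Clifford algebras with the same class in $\Br(k(S))$ followed by ``It is then straightforward to check the following.''  You are right to be uncomfortable with this: agreement of the classes in $\Br(k(S))$ is only generic information, and does not by itself yield a Morita $S$-equivalence of the coherent $\OO_S$-algebras $\kc_0'$ and $\kc_0''$, which is what the equivalence $\Db(S,\kc_0')\simeq\Db(S,\kc_0'')$ requires.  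Your device --- running the odd-rank, root-stack argument from the proof of Theorem~\ref{thm:hyperbolic_splitting_morita} so that $\kb_0'$ and $\kb_0''$ become Azumaya on a common regular integral Deligne--Mumford stack and then using injectivity of its Brauer group into the generic one --- is the correct way to upgrade generic agreement to Morita equivalence.  So in spirit your proof is the detailed version of what the paper waves away.

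However, there is a genuine gap in the way you argue $D''=D$, and you half-notice it yourself.  The fact that $Q''|_U = Q'|_U$ gives $D''\cap U = D'\cap U$, and the observation that $D$ meets each ruling in finitely many points controls $D$, not $D''$; nothing in that sentence excludes $D''$ acquiring components supported on the finitely many bad rulings $F = S\setminus U$ (i.e.\ vertical components, which a standard conic bundle's discriminant can in principle contain).  If $D''\ne D'$, then $\kc_0'$ and $\kc_0''$ are orders with different non-Azumaya loci and the root stacks are different, so the stack-theoretic comparison does not directly apply.  Your last paragraph correctly isolates this as the missing step and offers a plausible repair (the extra degenerate fibers being split, hence Morita-inert), but you do not establish it.  That said, this same issue is implicit and unaddressed in the paper's own ``straightforward to check''; your write-up is more transparent about what actually has to be verified, namely a structural fact about Alexeev's modifications (either $D''=D'$, or the added vertical degenerations carry trivial even Clifford class) that one would need to extract from \cite{alekseev:dP4} to make the equivalence $\cat{A}_X\simeq\Db(S,\kc_0'')$ fully rigorous.
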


An interesting case of quartic Del Pezzo fibrations arises when
$C=\PP^1$ and it makes sense to wonder about the rationality of $X$.
By \cite[Thm.~3.2]{colliot-thelene_sansuc_swinnerton-dyer:quadrics_I},
$X$ is $k(t)$-birational to the conic bundle $Q'\to S$ and Lemma
\ref{lem:rationality} implies that the rationality of $X$ is
equivalent to that of $Q'$.  Alexeev \cite{alekseev:dP4} and Shramov
\cite{shramov:dp4}, by reducing to the study of $Q''$, proved that the
rationality of $X$ depends only on the topological Euler
characteristic of $X$.

On the other hand, if $C=\PP^1$, then $S$ is a Hirzebruch surface. For
conic bundles on Hirzebruch surfaces there exists a criterion of
rationality in terms of derived categories.

\begin{theorem}\label{marmic}{\rm (\cite[Thm. 1.2]{bolognesi_bernardara:conic_bundles})}
If $S$ is a Hirzebruch surface or $\PP^2$ over $\CC$, then $Q''$ is
rational if and only if it representable in codimension 2.  In
particular, there is a semiorthogonal decomposition
$$
\Db(S,\kc_0'') = \langle \Db(\Gamma_1), \dotsc, \Db(\Gamma_k), E_1,
\dotsc, E_l \rangle,
$$
where $\Gamma_i$ are smooth projective curves and $E_i$ exceptional objects if and only if we have $J(Y) = \oplus J(\Gamma_i)$.
\end{theorem}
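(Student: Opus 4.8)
The plan is to reduce the whole statement to the nontrivial component $\Db(S,\kc_0'')$ and to the intermediate Jacobian $J(Q'')$, and then to establish the chain of equivalences: $Q''$ rational $\Leftrightarrow$ $J(Q'')\cong\bigoplus_i J(\Gamma_i)$ as principally polarized abelian varieties $\Leftrightarrow$ $\Db(S,\kc_0'')$ admits a decomposition into derived categories of smooth curves and exceptional objects $\Leftrightarrow$ $Q''$ is categorically representable in codimension $2$, the last equivalence being essentially formal once the others are in place.

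First I would record the reduction step. Since $Q''\to S$ is a conic bundle, Theorem~\ref{semiortquad} in relative dimension $1$ gives a semiorthogonal decomposition $\Db(Q'')=\langle \Db(S,\kc_0''),\ \pi^*\Db(S)(1)\rangle$; and as $S$ is $\PP^2$ or a Hirzebruch surface, $\Db(S)$ carries a full exceptional collection (Beilinson, Orlov). Hence $\Db(Q'')$ is categorically representable in codimension $2$ if and only if $\Db(S,\kc_0'')$ is, and a decomposition of $\Db(Q'')$ into derived categories of smooth curves and exceptional objects corresponds, after absorbing $\Db(S)$ into the exceptional blocks and mutating, to a decomposition of $\Db(S,\kc_0'')$ as in the statement. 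Here one uses that an admissible indecomposable subcategory of the derived category of a smooth projective curve is either trivial or the whole category, so the curve factors cannot be broken apart.

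Next I would treat the implication ``decomposition $\Rightarrow J(Q'')\cong\bigoplus_i J(\Gamma_i)$ as ppav''. Additivity of Hochschild homology (equivalently, of Hodge numbers) over semiorthogonal decompositions gives at once the isogeny, since exceptional objects contribute nothing in weight $3$ and each $\Db(\Gamma_i)$ contributes $H^1(\Gamma_i)$. To upgrade this to an isomorphism of principally polarized abelian varieties one checks that each $\Db(\Gamma_i)$-block realizes $H^1(\Gamma_i)$ as a sub-Hodge structure of $H^3(Q'')$, mutually orthogonal for the polarization, so that the Clemens--Griffiths criterion applies; this is the polarization lemma of \cite{bolognesi_bernardara:conic_bundles}. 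The converse implication ``$Q''$ rational $\Rightarrow J(Q'')\cong\bigoplus_i J(\Gamma_i)$'' is the classical Clemens--Griffiths argument: $J$ is a birational invariant of threefolds as a ppav, $J(\PP^3)=0$, and each blow-up along a smooth curve $\Gamma$ contributes precisely $J(\Gamma)$ as a polarized direct summand.

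The hard part is the remaining implication: starting from $J(Q'')\cong\bigoplus_i J(\Gamma_i)$ — equivalently, from rationality of $Q''$, via the converse to Clemens--Griffiths for standard conic bundles over minimal rational surfaces (Iskovskikh, Shokurov) — one must actually construct the semiorthogonal decomposition of $\Db(S,\kc_0'')$. For $S=\PP^2$ or a Hirzebruch surface the vanishing of the Clemens--Griffiths obstruction forces the numerical condition $|2K_S+\Delta|=\emptyset$ on the discriminant curve $\Delta$, and then $Q''$ is linked by a chain of elementary Sarkisov links — blow-ups and blow-downs along smooth points and smooth curves — to a model admitting a visible curve-plus-exceptional decomposition (a conic bundle with reducible or rational discriminant, or $\PP^1\times\PP^2$, or $\PP^3$). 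I would transport the decomposition back along this chain using Orlov's blow-up formula \cite{orlov:blowup}, and identify the curve factors obtained with the $\Gamma_i$ through the Jacobian comparison of the previous step (matching the Prym $P(\widetilde{\Delta}/\Delta)$ with $\bigoplus_i J(\Gamma_i)$). \textbf{The main obstacle is exactly this step:} it relies on the deep birational classification (Shokurov's theory of Prym varieties) to guarantee that rationality is equivalent to the ppav splitting, and on a delicate case-by-case analysis of the elementary links to ensure that transporting derived categories introduces no extra non-exceptional, non-curve component and that the curves produced are precisely those appearing in the intermediate Jacobian.
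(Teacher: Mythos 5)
Theorem~\ref{marmic} is cited from \cite{bolognesi_bernardara:conic_bundles} and is not proved in the present paper, so there is no in-paper argument to compare against. Your outline faithfully reconstructs the strategy of the cited reference---reduction of representability to $\Db(S,\kc_0'')$ via Kuznetsov's quadric-fibration decomposition and the full exceptional collection on $S$, the Fourier--Mukai/Hodge-theoretic argument (sub-Hodge structures of $H^3$ plus the Clemens--Griffiths criterion, not merely a Hochschild dimension count) for ``decomposition $\Rightarrow$ Jacobian splitting,'' and the Iskovskikh--Shokurov--Beauville rationality classification of standard conic bundles over minimal rational surfaces transported through the Sarkisov links with Orlov's blow-up formula for the converse---and you correctly flag that last step as the technical core of the argument.
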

\begin{proof}
While \cite[Thm.~1.2]{bolognesi_bernardara:conic_bundles} is stated
only for \it minimal \rm rational surfaces, the same arguments can be
used for the non-minimal Hirzebruch surface $S:=\FF_1$ as well. One
implication is a corollary of
\cite[Thm.~1.1]{bolognesi_bernardara:conic_bundles}: from the
semiorthogonal decomposition it is possible to split the intermediate
Jacobian of $X$ into Jacobian of curves as principally polarized
abelian varieties. This is sufficient to get rationality, thanks to
Shokurov \cite[Thm.~10.1]{shokuprym}. Actually, Shokurov states the
Theorem for minimal surfaces, but he actually proves it for $\FF_1$ as
well \cite[\S10]{shokuprym}, \cite[Rem.~1]{iskovconicduke}.  The
other implication follows by the case-by-case analysis from
\cite[\S6]{bolognesi_bernardara:conic_bundles}.  Indeed, (see
\cite[Conj.~I,~Rem.~1]{iskovconicduke}) the discriminant divisor of a
rational conic bundle over $S$ has either a trigonal or a
hyperelliptic structure induced by the natural fibration $S \to
\PP^1$.
\end{proof}

Finally, we can give a proof of Theorem~\ref{thm:1}, obtaining a
categorical criterion for rationality of the threefold $X$.

\begin{proof}[Proof of Theorem~\ref{thm:1}]
For a quartic Del Pezzo fibration $X \to \PP^1$, the semiorthogonal
decomposition takes the form
$$
\Db(X) = \langle \cat{A}_X, \ko_{X/\PP^1}(1), \pi^* \ko_{\PP^1}(1) \otimes
\ko_{X/\PP^1}(1) \rangle,
$$
where $\ko_{X/\PP^1}(1)$ and $\pi^* \ko_{\PP^1}(1) \otimes
\ko_{X/\PP^1}(1) $ are exceptional objects. The result then follows as
a corollary of Corollary~\ref{equicat2} and Theorem~\ref{marmic}.
\end{proof}

\section{Relative intersections of two 4-dimensional quadrics}\label{section:4fold}

In this section, we use our techniques to study the derived category
of a fourfold $X$ admitting a fibration $X \to \PP^1$ in intersections
of two four-dimensional quadrics.  Moreover we propose a
conjecture---in the spirit of Kuznetsov's on cubic fourfolds
\cite{kuznetsov:cubic_fourfolds}---relating the rationality of such
varieties to their categorical representability. Our analysis is based
on the study of the semiorthogonal decompositions of the quadric
fibrations involved and the even Clifford algebras associated to the
relative surface fibration defined by homological projective duality.
We will also present a moduli space-theoretic way to attack this
problem, which will lead us to consider twisted universal sheaves.
In this section we will eventually need to work over an algebraically
closed field of characteristic zero.

\subsection{Intersections of 4-dimensional quadrics and Clifford algebras}

Let $Y$ be a scheme and $\pi: X \to Y$ a fibration in complete
intersections of two four-dimensional quadrics.  More precisely, we
suppose that there exists a vector bundle $E$ of rank 6
such that $X \subset \PP(E)$ is the
relative complete intersection of two quadric fibrations $Q_1 \to Y$
and $Q_2 \to Y$ of relative dimension 4. In this case, we have
$\omega_{X/Y} = \ko_{X/Y}(-2)$ and thus, by
Proposition~\ref{decompo-of-relative}, for every integer $j$, we have
a semiorthogonal decomposition
\begin{equation}
\label{deco-fourfold}
\Db(X) = \langle \cat{A}^j_X, \Db(Y)(j), \Db(Y)(j+1) \rangle.
\end{equation}
To simplify the notation, let us denote $\cat{A}_X= \cat{A}_X^1$. 

The two quadric bundles $Q_1 \to Y$ and $Q_2 \to Y$ span a quadric
bundle $Q \to S$ over a $\PP^1$-bundle $S \to Y$.  Let $\kc_0$ be the
even Clifford algebra of $Q\to S$.  Recall that by relative
homological projective duality (Theorem~\ref{hpd}\eqref{hpd_Fano}) we
have $\cat{A}_X \simeq \Db(S,\kc_0)$.  If $Q \to S$ has a regular
section, then by quadric reduction we obtain a quadric surface
fibration $Q' \to S$, with associated even Clifford algebra $\kc_0'$.
Assuming we are in the generic situation (see
Definition~\ref{def:generic-fibration}), then $Q \to S$, and therefore
$Q' \to S$ (by Corollary~\ref{cor:degeneration_reduction}), has simple
degeneration with smooth discriminant divisor.  By
Theorem~\ref{morita}, we have $\Db(S,\kc_0) \simeq \Db(S,\kc_0')$ and
$\kc_0$ and $\kc_0'$ define Azumaya algebras $\CliffB_0$ and
$\CliffB_0'$ on the discriminant double cover $T \to S$, with the same
Brauer class $\beta \in \Br(T)$.

Note that if $Y$ is a curve over an algebraically closed field $k$,
then $X\rightarrow Y$ always has a generically smooth section by
Lemma~\ref{lem:C_2}.  Then, by the Amer--Brumer
Theorem~\ref{amer-brumer}, $Q \to S$ always has a generically smooth
section. In order to perform quadric reduction we need a smooth
section:\ we will explicitly underline when it is needed.  When $X$ is
smooth over $k$ (e.g., if $Y$ is smooth over $k$ of characteristic
$\neq 2$ and $X \to Y$ is generic, by
Proposition~\ref{prop:generic_remarks}), then any section of $X\to Y$
is smooth by Lemma~\ref{lem:regsect}.

As previously recalled, relative homological projective duality
(Theorem~\ref{hpd}\eqref{hpd_Fano}) gives the first description of the
category $\cat{A}_X$.  If we suppose that $Q \to S$ has a globally
smooth section (which is the case, by Theorem~\ref{amer-brumer}, if
$X \to Y$ has a smooth section), then this can be extended further
thanks to Theorem~\ref{morita}, and the fact that $Q \to S$ has even
relative dimension.  

\begin{corollary}\label{coro:equiva-in-4fold-case}
Let $\pi: X \to Y$ be a generic relative complete intersection of two
four-dimensional quadric fibrations over an integral scheme $Y$ smooth
over $k$.  If the linear span quadric fibration $Q \to S$ has a smooth
section (e.g., if $\pi$ has a smooth section) then the categories
$\cat{A}_X$, $\Db(S,\kc_0)$, $\Db(S,\kc_0')$, and $\Db(T,\beta)$ are
equivalent.
\end{corollary}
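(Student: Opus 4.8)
The plan is to concatenate the three structural inputs assembled in the discussion above, so I will indicate the steps and flag where one must be careful. First I would invoke relative homological projective duality: here $E$ has rank $n=6$ and $X\to Y$ is the relative complete intersection of $m=2$ quadric fibrations, so $2m=4<6=n$, the fibers of $X\to Y$ are Fano, and Theorem~\ref{hpd}\eqref{hpd_Fano} applies; comparing its semiorthogonal decomposition with~\eqref{deco-fourfold} for $j=1$ identifies the nontrivial component and gives $\cat{A}_X\simeq\Db(S,\kc_0)$, as already recalled above.

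Next, using the regular section of $Q\to S$ (which is part of the hypothesis, and which exists as soon as $\pi$ has a regular section by the easy direction of the Amer--Brumer Theorem~\ref{amer-brumer}), I would perform quadric reduction along it to produce the quadric fibration $Q'\to S$ of relative dimension $2$ with even Clifford algebra $\kc_0'$. After reducing to the case where $Y$, hence the $\PP^1$-bundle $S$, is connected---and therefore, being regular, integral---Corollary~\ref{morita} yields an equivalence $\Db(S,\kc_0)\simeq\Db(S,\kc_0')$. Finally, genericity forces $Q\to S$ to have simple degeneration along a smooth discriminant divisor, hence the same for $Q'\to S$ by Corollary~\ref{cor:degeneration_reduction}; since $Q'\to S$ has even relative dimension $2$, Proposition~\ref{prop:simple_azumaya} turns $\kc_0'$ into an Azumaya algebra $\kb_0'$ on the discriminant cover $f:T\to S$ whose Brauer class is the class $\beta\in\Br(T)$ already attached to $\kb_0$. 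As $f$ is affine, pushing forward along $f$ yields $\Db(S,\kc_0')=\Db(S,f_*\kb_0')\simeq\Db(T,\kb_0')\simeq\Db(T,\beta)$. Stringing these together gives $\cat{A}_X\simeq\Db(S,\kc_0)\simeq\Db(S,\kc_0')\simeq\Db(T,\beta)$.

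There is really no hard step here: the statement is a corollary, and the substance lives in the results it invokes. The only points that need care are bookkeeping---verifying that each invoked result applies in the present situation. Concretely, one checks that $S$ is regular integral, so that Corollary~\ref{morita} is available and, via Proposition~\ref{prop:split_hyperbolic_case}, so that $Q\to S$ and $Q'\to S$ share a single discriminant cover $T$; that the section along which quadric reduction is performed is genuinely regular (guaranteed here by hypothesis); and that simple degeneration, hence the Azumaya structure on $T$, persists after reduction. All of these have been arranged in the discussion preceding the corollary, so the proof is precisely the concatenation just described.
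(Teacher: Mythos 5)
Your proof is correct and is essentially the paper's own argument: the paper likewise just concatenates relative HP duality (Theorem~\ref{hpd}\eqref{hpd_Fano}) for $\cat{A}_X\simeq\Db(S,\kc_0)$, Corollary~\ref{morita} for $\Db(S,\kc_0)\simeq\Db(S,\kc_0')$, and affineness of $T\to S$ together with the Azumaya property (Proposition~\ref{prop:simple_azumaya}) for the passage to $\Db(T,\beta)$. The only cosmetic difference is that the paper remarks that the equivalence $\Db(S,\kc_0)\simeq\Db(T,\beta)$ already holds for $\kc_0$ itself (no regular section needed), whereas you route this last step through $\kc_0'$; both are valid.
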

\begin{proof}
This is a corollary of relative homological projective duality
(Theorem~\ref{hpd}\eqref{hpd_Fano}) and the Morita invariance of the
even Clifford algebra under quadric reduction
(Corollary~\ref{morita}). In fact, the equivalence
$\Db(S,\kc_0)\simeq \Db(T,\beta)$ does not require the existence of a
smooth section.
\end{proof}

Consider $Y = \PP^1$. The rationality of $X$ is in general unknown. The techniques developed
in this paper and the language of categorical representability (see
Definition~\ref{repre}) allow us to state a conjecture relating the
rationality and the derived category of $X$, in the same spirit as
Kuznetsov's conjecture on cubic fourfolds
\cite{kuznetsov:cubic_fourfolds}.

\begin{conjecture}\label{kuzne-type-conj}
Let $X \to \PP^1$ be a fibration in complete intersections of two
four-dimensional quadrics over an algebraically closed field of
characteristic zero.
\begin{itemize}
\item {\bf Weak version.}  
The fourfold $X$ is rational if and only if it is categorically
representable in dimension at most two.

\item {\bf Strong version.}  
The fourfold $X$ is rational if and only
if $\cat{A}_X$ is representable in dimension at most two.
\end{itemize}
\end{conjecture}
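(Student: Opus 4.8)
The plan is to establish the Conjecture in the case $\beta = 0$ (which covers, in particular, the situation where $X$ contains a surface generically ruled over $\PP^1$). Throughout I work over an algebraically closed field of characteristic zero and take $X \to \PP^1$ generic in the sense of Definition~\ref{def:generic-fibration}, so $X$ is smooth (the base $\PP^1$ being regular, cf.\ Remark~\ref{regsect_remark}).

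\emph{Categorical representability.} By relative homological projective duality (Theorem~\ref{hpd}\eqref{hpd_Fano}) with $n = 6$ and $m = 2$, the fibers of $X \to \PP^1$ are Fano and there is a semiorthogonal decomposition
$$
\Db(X) = \langle \Db(S, \kc_0),\ \pi^* \Db(\PP^1)(1),\ \pi^* \Db(\PP^1)(2) \rangle,
$$
so $\cat{A}_X \simeq \Db(S, \kc_0)$ while the two copies of $\pi^* \Db(\PP^1)$ contribute four exceptional objects. Since the discriminant cover $f : T \to S$ is affine and carries the Azumaya algebra $\kb_0$ of class $\beta$, one has $\Db(S, \kc_0) \simeq \Db(T, \beta)$, which for $\beta = 0$ is simply $\Db(T)$; and $T$ is a smooth projective surface because $S$ is regular with smooth discriminant divisor. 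Hence $\Db(X) = \langle \Db(T), E_1, E_2, E_3, E_4 \rangle$ exhibits $X$ as categorically representable in dimension $2 = \dim X - 2$, with $\cat{A}_X$ representable in dimension $2$, so both versions of the Conjecture reduce to proving rationality.

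\emph{Rationality.} First I would produce a regular section of the linear span quadric fibration $Q \to S$: by Lemma~\ref{lem:C_2} the fourfold $X \to \PP^1$ has a section, regular since $X$ is smooth (Remark~\ref{regsect_remark}), whence $Q \to S$ has a regular section $s$ by the Amer--Brumer Theorem~\ref{amer-brumer}. Quadric reduction along $s$ yields a quadric surface fibration $Q' \to S$ with the same (smooth) discriminant divisor and simple degeneration (Corollary~\ref{cor:degeneration_reduction}), and by Theorem~\ref{thm:hyperbolic_splitting_morita} its Azumaya algebra $\kb_0'$ on $T$ is Brauer equivalent to $\kb_0$, hence of class $\beta = 0$. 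Since $S = \PP(L_1 \oplus L_2)$ is a Hirzebruch surface, hence $k$-rational, Corollary~\ref{cor:well-known_rationality} shows $Q'$ is $k$-rational. Over $F = k(\PP^1)$ the generic fiber $X_F$ is a complete intersection of two four-dimensional quadrics with an $F$-point (restrict $s$), so by \cite[Thm.~3.2]{colliot-thelene_sansuc_swinnerton-dyer:quadrics_I} it is $F$-birational to the quadric reduction $Q' \times_{\PP^1} \Spec F$; Lemma~\ref{lem:rationality} then makes $X$ and $Q'$ $k$-birational, so $X$ is $k$-rational. For the ``in particular'' clause, a general member of the ruling gives, over $F$, a line $\ell \subset X_F$ isotropic for the whole pencil of quadrics; reducing the pencil along a sub-line $\ell_0 \subset \ell$ turns the image of $\ell$ into a rational section of the resulting quadric surface fibration over $\PP^1_F$, whose Clifford Azumaya class vanishes by Theorem~\ref{thm:well-known}, giving $\beta = 0$ after spreading out (using $\Br(T) \hookrightarrow \Br(k(T))$ for $T$ regular).

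The main obstacle is the rationality step, and it is essentially organizational rather than conceptual: one must (i) secure a \emph{regular} section so that quadric reduction is legitimate---which, as above, comes for free from smoothness of $X$ via Amer--Brumer---and (ii) invoke the classical birationality of $X$ with its hyperbolic-splitting quadric $Q'$ from \cite{colliot-thelene_sansuc_swinnerton-dyer:quadrics_I} over the function field $k(\PP^1)$ rather than reprove it, then descend the birationality to $k$ via Lemma~\ref{lem:rationality}. The one genuinely substantive ingredient, namely that quadric reduction preserves the Brauer class of $\kb_0$ on the discriminant cover (Theorem~\ref{thm:hyperbolic_splitting_morita}), is already in place from \S\ref{subsec:hyperbolic_algebraic}.
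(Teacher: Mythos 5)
The statement you address is a conjecture, so the paper gives no proof---only partial evidence in \S\ref{section:4fold} (Proposition~\ref{prop:gamma-0-is-rational}, Lemma~\ref{lemma:lineistrivial}, Corollary~\ref{firstway}) establishing both halves of the strong version under the hypothesis $\beta=0$. Your proposal supplies exactly that evidence, via the same chain of ingredients---relative HPD, Morita invariance of $\kc_0$ under quadric reduction, the Amer--Brumer theorem, the birationality result of \cite{colliot-thelene_sansuc_swinnerton-dyer:quadrics_I}, and the isotropy criterion of Theorem~\ref{thm:well-known}---so it is essentially identical to the paper's treatment.
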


We say that a morphism $X \to Y$ \linedef{contains a line} over $Y$ if
$X$ contains a surface generically ruled over $Y$ via the restriction
of the morphism. We will prove the strong version of Conjecture
\ref{kuzne-type-conj} in two cases:\ if the Brauer class $\beta$ on
$T$ is trivial then we prove that $\cat{A}_X \simeq \Db(T)$ and $X$ is
rational; if $X$ contains a line over $\PP^1$ then $X$ is rational and
we establish an equivalence $\cat{A}_X \simeq \Db(T)$.

Even though, as Lemma~\ref{lemma:lineistrivial} shows, this second case
is contained in the first one, it still deserves its own treatment.
Indeed, relative quadric intersections containing lines were already
considered in the literature (see
\cite{colliot-thelene_sansuc_swinnerton-dyer:quadrics_I}), where the
associated rational parameterizations can be explicitly described.
Also, in this case, we provide an independent moduli space-theoretic
interpretation of the equivalence $\cat{A}_X \simeq \Db(T)$ that
generalizes to other contexts.

\begin{proposition}
\label{prop:gamma-0-is-rational}
Let $\pi: X \to Y$ be a generic relative complete intersection of two
four-dimensional quadric fibrations over an integral scheme $Y$ smooth
over $k$. Then $X$ and $Q'$ are $k$-birationally equivalent.  If
furthermore, $\beta \in \Br(T)$ is trivial, and $Y$ is $k$-rational,
then $X$ is $k$-rational and there is an equivalence $\cat{A}_X \simeq
\Db(T)$.
\end{proposition}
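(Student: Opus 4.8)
The plan is to prove the birational equivalence first, and then read off both the rationality of $X$ and the equivalence $\cat{A}_X \simeq \Db(T)$ from the hypothesis $\beta = 0$, in each case assembling results established earlier. Throughout, $Q'$ denotes the quadric reduction of $Q \to S$ along a regular section; since the discriminant divisor is smooth, any section of $Q \to S$ is automatically regular (Remark~\ref{regsect_remark}), and by the Amer--Brumer Theorem~\ref{amer-brumer} the existence of such a section is equivalent to $X \to Y$ having a rational section, which holds for instance when $Y$ is a smooth complete curve over an algebraically closed field (Lemma~\ref{lem:C_2}).

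For the birational statement I would work over the generic point $\eta$ of $Y$. There $X_\eta$ is a smooth complete intersection of two four-dimensional quadrics over $k(Y)$ spanning the pencil $Q_\eta \to \PP^1_{k(Y)}$ (the restriction of $Q \to S$ over $\eta$), and by Amer--Brumer it carries a $k(Y)$-rational point. Applying \cite[Thm.~3.2]{colliot-thelene_sansuc_swinnerton-dyer:quadrics_I} over $k(Y)$, as in \S\ref{linsyst}, $X_\eta$ is $k(Y)$-birational to the quadric surface fibration over $\PP^1_{k(Y)}$ obtained from $Q_\eta$ by quadric reduction at that point, namely $Q'_\eta$. Lemma~\ref{lem:rationality}(1) then promotes this to a $k$-birational equivalence $X \sim_k Q'$.

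Now assume $\beta \in \Br(T)$ is trivial. The quadric surface fibration $Q' \to S$ has simple degeneration along the smooth discriminant divisor (Corollary~\ref{cor:degeneration_reduction} together with the genericity hypothesis), its Azumaya even Clifford algebra $\kb_0'$ on $T$ has Brauer class $\beta$ (Proposition~\ref{prop:simple_azumaya}), which vanishes by assumption, and $S$ is a $\PP^1$-bundle over the rational scheme $Y$, hence $k$-rational; therefore Corollary~\ref{cor:well-known_rationality} gives that $Q'$ is $k$-rational, whence $X$ is $k$-rational by the birational step. For the derived category, recall that relative homological projective duality (Theorem~\ref{hpd}\eqref{hpd_Fano}) gives $\cat{A}_X \simeq \Db(S,\kc_0)$; since $T \to S$ is affine and $\kb_0$ is Azumaya with $f_*\kb_0 \cong \kc_0$, this equals $\Db(T,\kb_0) = \Db(T,\beta)$, and $\beta = 0$ gives $\Db(T,\beta) \simeq \Db(T)$. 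Composing the equivalences yields $\cat{A}_X \simeq \Db(T)$.

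The steps involving the even Clifford algebra, HP duality, and Theorem~\ref{thm:well-known} are essentially formal once the dictionary of the previous sections is available; the real content — and the point to be most careful about — is the birational step, where one must identify the construction of \cite{colliot-thelene_sansuc_swinnerton-dyer:quadrics_I} with quadric reduction (as in \S\ref{linsyst}) and verify that its rational-point hypothesis is met, which is exactly why one invokes Amer--Brumer and wants $Q \to S$ (equivalently $X \to Y$) to admit a section. One should also carry along the standing assumption that $Y$, hence $S$, is $k$-rational, without which the rationality conclusion would be vacuous.
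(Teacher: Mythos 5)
Your proof is correct and follows essentially the same route as the paper's: the birational step comes from \cite[Thm.~3.2]{colliot-thelene_sansuc_swinnerton-dyer:quadrics_I} at the generic point followed by Lemma~\ref{lem:rationality}(1), the rationality from Theorem~\ref{thm:well-known} (you go through Corollary~\ref{cor:well-known_rationality}, which is the same content packaged with the rationality of $S$), and the derived equivalence is exactly the chain recorded in Corollary~\ref{coro:equiva-in-4fold-case}. The paper's proof is terser but cites the same ingredients; your version makes explicit the two points the paper leaves implicit, namely the verification (via Lemma~\ref{lem:C_2}, Remark~\ref{regsect_remark}, and Amer--Brumer) that a regular section of $Q \to S$ exists so that $Q'$ is defined, and the standing hypothesis that $Y$ (hence $S$) is $k$-rational, without which the $k$-rationality conclusion would not follow from $k(S)$-rationality of $Q'$.
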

\begin{proof}
For the first claim, by
\cite[Thm.~3.2]{colliot-thelene_sansuc_swinnerton-dyer:quadrics_I},
$X$ and $Q'$ are $k(Y)$-birational, hence $k$-birational by
Lemma~\ref{lem:rationality}.  For the second claim, we have that
$\beta$ is trivial in $\Br(T)$ if and only if $Q'$ has a rational
section over $S$, by Theorem~\ref{thm:well-known}.  In this case,
since $Y$ is $k$-rational (hence $S$ is also $k$-rational), $Q'$ is
thus $k(S)$-rational, hence $k$-rational.  But by the first claim $Q'$
is $k$-birational to $X$.  The last assertion is a straightforward
consequence of Corollary \ref{coro:equiva-in-4fold-case}.
\end{proof}

\subsection{Quadric intersection fibrations containing a line}
The case of complete intersections of two quadrics of dimension $n
\geq 4$ over a field that contain a line was studied in
\cite{colliot-thelene_sansuc_swinnerton-dyer:quadrics_I}.  In
particular, an explicit geometric proof of the rationality of $X \to
\PP^1$ can be given as a special case (see also
\cite[Prop.~2.2]{colliot-thelene_sansuc_swinnerton-dyer:quadrics_I}).

\begin{proposition}
\label{linerational}
Let $k$ be a field, $Y$ be an integral $k$-rational variety, and $W
\to Y$ be a generic relative complete intersection of two quadric
fibrations of dimension $n \geq 4$.  If $W$ contains a line over $Y$,
then $W$ is $k$-rational.
\end{proposition}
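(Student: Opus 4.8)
The plan is to descend to the generic fibre of $W\to Y$ over the function field $k(Y)$, to prove rationality there by the classical linear projection from the line, and then to transport the conclusion back to $W$ using that $Y$ is $k$-rational.

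First I would note that, since $W\to Y$ is generic in the sense of Definition~\ref{def:generic-fibration}, the total space $W$ is regular by Remark~\ref{generic_remarks}; localizing at the generic point of $Y$, the generic fibre $W_{k(Y)}$ is then a smooth complete intersection of two quadric hypersurfaces $Q_{1,k(Y)}\cap Q_{2,k(Y)}$ of dimension $n\geq 4$ in $\PP^{n+1}_{k(Y)}$, hence itself of dimension $n-1\geq 3$. The hypothesis that $W$ contains a surface generically ruled over $Y$ translates, on the generic fibre, into a line $\ell\subset W_{k(Y)}$ defined over $k(Y)$, namely the generic fibre of that surface.

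Next I would invoke the field case, which is \cite[Prop.~2.2]{colliot-thelene_sansuc_swinnerton-dyer:quadrics_I}: a smooth complete intersection of two quadrics of dimension $n\geq 4$ over a field and containing a line defined over that field is rational over it. The underlying geometry is linear projection away from $\ell$: every quadric of the pencil defining $W_{k(Y)}$ contains $\ell$, so for a general $2$-plane $\Pi\supset\ell$ each such quadric meets $\Pi$ in a reducible conic $\ell\cup\ell'$, and the two residual lines coming from $Q_{1,k(Y)}$ and $Q_{2,k(Y)}$ meet in a single point; hence $\Pi\cap W_{k(Y)}$ is $\ell$ together with one further point, so the blow-up of $W_{k(Y)}$ along $\ell$ maps birationally onto the $\PP^{n-1}_{k(Y)}$ of $2$-planes through $\ell$. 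Thus $W_{k(Y)}$ is $k(Y)$-rational.

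Finally, since $Y$ is $k$-rational, part (2) of Lemma~\ref{lem:rationality} yields that $W$ is $k$-rational, as claimed. I do not expect a genuine obstacle: the substance of the argument is the classical projection, already available in the literature, and the only points to handle carefully --- both consequences of genericity and of the definition of ``contains a line over $Y$'' --- are that the generic fibre is smooth and that it carries an honest $k(Y)$-rational line (rather than merely a rational curve of higher degree) inside $W_{k(Y)}$.
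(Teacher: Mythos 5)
Your proof is correct and is essentially the paper's own argument: both rest on the classical linear projection away from the line, under which a general $2$-plane containing $\ell$ meets each quadric of the pencil in $\ell$ plus a residual line, the two residual lines meeting in one point. The only cosmetic difference is that you run the projection on the generic fibre over $k(Y)$ and then descend via Lemma~\ref{lem:rationality}(2), whereas the paper describes the projection fibrewise and relativizes it directly to a birational map from $W$ to a $\PP^{n-1}$-bundle over $Y$.
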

\begin{proof}
Consider the generic fiber $W_\eta\subset \PP_{k(Y)}^{n+1}$, and
$l$ the line contained in $W_\eta$.  The fibers of the projection
$p_l:\PP_{k(Y)}^{n+1} \dashrightarrow \PP_{k(Y)}^{n-1}$ off the line $l$ are the
2-dimensional planes containing the line $l$. Consider now the two
quadrics $Q_{1,\eta}$ and $Q_{2,\eta}$, cutting out $W_\eta$. A generic fiber
of the projection $p_l$ intersects $Q_{i,\eta}$ along a pair of lines:\
$l$ and a second line $l_i$, for $i=1,2$.  The two lines $l_1$ and
$l_2$ meet in just one point in each fiber of $p_l$. Then by
restricting $p_l$ to $W_\eta$ we get the desired birational map
$
W_\eta \dashrightarrow \PP_{k(Y)}^{n-1}.
$
Then $W$ is birational to a projective bundle over the $k$-rational scheme $Y$, and hence
$W$ is $k$-rational.
\end{proof}

In our case of a generic relative complete intersection $X \to Y$ 
of two quadrics of dimension $4$, we can say more.  We prove that
containing a line implies the vanishing of the associated Brauer
class.  Then rationality, and indeed Conjecture~\ref{kuzne-type-conj}
for this case, is then a consequence of Proposition
\ref{prop:gamma-0-is-rational}.

\begin{lemma}
\label{lemma:lineistrivial}
Let $\pi: X \to Y$ be a generic relative complete intersection of two
four-dimensional quadric fibrations over a regular integral scheme $Y$.
If $X$ contains a line over $Y$ then $\beta$ is trivial.
\end{lemma}
\begin{proof}
If $X \to Y$ contains a line then $Q \to S$ contains a line $l$.  A
local computation proves that, if $Q\to S$ has simple degeneration,
then there exists a smooth section $s$ whose image is contained in the
line $l$.  Let $Q'$ be the quadric reduction along $s$. In the
process, $l$ is contracted to a section of $Q'$ over $S$. 
By Theorem~\ref{thm:well-known}, the Brauer class $\beta'$ associated
to $Q' \to S$ is trivial. Finaly, by Corollary~\ref{morita}, we have
$\beta=\beta'$ in $\Br(T)$, hence $\beta$ is trivial as well.
\end{proof}

If the base scheme $Y=\PP^1$ over an algebraically closed field
then we can even say more.

\begin{lemma}
\label{lemma:lineifftrivial}
Let $k$ be algebraically closed and $\pi: X \to \PP^1$ be a generic
relative complete intersection of two four-dimensional quadric
fibrations. Then $X$ contains a line over $\PP^1$ if and only if
$\beta$ is trivial.
\end{lemma}
\begin{proof}
One direction is Lemma \ref{lemma:lineistrivial}.  For the other, by
Lemma \ref{lem:C_2}, $\pi$ has a rational section (which is smooth by
Remark~\ref{rem:smooth_section}), hence by the Amer--Brumer
Theorem~\ref{amer-brumer}, so does the associated quadric fibration $Q
\to S$.  Then we can perform quadric reduction and obtain a quadric
surface fibration $Q'\to S$. The Brauer class associated to $Q'$ is
again $\beta$, hence by Theorem~\ref{thm:well-known}, $Q'$ has a
rational section. The existence of a rational section of $Q'$ implies
the existence of a rational line over $\PP^1$ in $Q$.  Finally, we use
the Amer Theorem (cf.\ Remark~\ref{rem:Amer}) to obtain a rational
line over $\PP^1$ inside $X$.
\end{proof}

\begin{corollary}
\label{firstway}
Let $k$ be algebrically closed and $X \to \PP^1$ be a generic relative
complete intersection of two four-dimensional quadrics.  If $X$
contains a line over $\PP^1$ then $X$ is rational and $\cat{A}_X$ is
equivalent to $\Db(T)$, hence is representable in dimension 2. 
\end{corollary}

We recall that if $M$ is a K3 surface and $\beta \in \Br(M)$ is
nontrivial, then $\Db(M,\beta)$ is never equivalent to $\Db(M)$
\cite[Rem.~7.10]{huybrechts-stellari}.  We do not know if the same
result holds for the class of surfaces $T$ that carry a fibration $T
\to \PP^1$ in hyperelliptic curves, and the Brauer classes $\beta \in
\Br(T)$, considered above.  Such a result would strengthen Corollary~\ref{firstway}.

\subsection{A moduli space-theoretic interpretation}

On the other hand, the rich geometry of the fibration $X \to \PP^1$
allows one to give another description of $\cat{A}_X$ as a derived
category of twisted sheaves on $T$ over the complex numbers.

More generally, let $W \to Y$ be the intersection of two
$2m$-dimensional quadric fibrations over a smooth projective variety,
and $Q \to S$ the associated quadric span.  For any point $y$ of $Y$,
there is a hyperelliptic curve $T_y$ parameterizing maximal isotropic
subspaces of the fiber $Q_y$.  In fact, $T_y$ is also the fine moduli
space for spinor bundles over $W_y$ (see
\cite{bondal_orlov:semiorthogonal} and \cite{ottaviani:spinor_bundles}
for definitions and constructions).  One can in general consider the
same moduli problem in the relative case, see
\cite[Thm.~4.3.7]{huybrechts-lehn}.  The most substantial difference
concerns the existence of a universal family on the product $W \times
T$.  While this can be constructed explicitly on $W_y \times T_y$
\cite[\S 2]{bondal_orlov:semiorthogonal}, the obstruction to the
existence of a universal family over $T$ is given by an element
$\omega$ of the Brauer group $\Br(T)$. That is, we have an
$\omega$-twisted universal sheaf $E$ on $W \times T$
\cite{caldararu:thesis}. The existence of a linear space of dimension
$(m-1)$ in $W$ over $Y$ implies the existence of a universal object
and then the vanishing of $\omega$.

Combining this description of $T$ as a non-fine moduli space and the
semiorthogonal decomposition in the absolute case given in
\cite{bondal_orlov:semiorthogonal}, we get an equivalence between
$\cat{A}_W$ and a category of twisted sheaves on $T$.

\begin{proposition}
\label{prop:mod-theoretical}
Let $Y$ be a smooth projective variety over $\CC$ and $W \to Y$ be a
generic relative complete intersection of two $2m$-dimensional quadric
fibrations.
Then there is an equivalence $\cat{A}_W \simeq \Db(T,\omega)$.  In
particular, if $W$ contains an $(m-1)$-plane over $Y$, then $\omega =
0$ and $W$ is categorically representable in dimension 2.  Moreover,
in this case, if $Y$ is rational then so is $W$.
\end{proposition}
\begin{proof}
By an $\omega$-twisted skyscraper sheaf $\kf_t$ of $T$ we mean a
simple $\omega$-twisted sheaves supported on the closed point $t$ of
$T$ (such a sheaf exists for every closed point, e.g., by
\cite[Cor.~1.2.6]{caldararu:thesis}).  Then one can check, as in
the untwisted case (e.g., \cite[Prop.~3.17]{huybrechts-book}), that the
collection of all $\omega$-twisted skyscraper sheaves forms a spanning
class (as defined in \cite[Def.~1.47]{huybrechts-book}) for
$\Db(T,\omega)$.

The $\omega$-twisted universal family $E$ over $W \times T$ is
a $pr_2^* \omega$-twisted sheaf such that for each closed point $y$ of
$Y$, the sheaf $E_y$ is the universal spinor bundle on the product $W \times_{k(y)} T = W_y \times T_y$.  Hence,
the restricted Fourier--Mukai functor $\Phi_{E_y} : \Db(T_y) \to
\Db(W_y)$ is fully faithful by
\cite[Thm.~2.7]{bondal_orlov:semiorthogonal} (note that the Brauer
class $\omega_y$ is trivial on $T_y$ by Tsen's theorem).
We observe that, for each structure sheaf $\kf_t$ of a
closed point $t$ of $T$ on the fiber $T_y$, we have that $\Phi_{E}(\kf_t)
= j_* \Phi_{E_y}(\kf_t)$, where $j : W_y \to W$ is the closed
embedding of the fiber.  As $j$ is an affine morphism, hence
cohomologically acyclic, we have that
$$\mathrm{Hom}_{\Db(T,\omega)}(\kf_{t_1},\kf_{t_2}) = \mathrm{Hom}_{\Db(W)}(\Phi_E(\kf_{t_1}), \Phi_E(\kf_{t_2})),$$
for any closed point $t_1$ and $t_2$ of $T$.
Hence we can apply \cite[Prop.~1.49]{huybrechts-book} to prove that $\Phi_E$ is fully faithful.

The existence of an $(m-1)$-plane over $Y$ allows one to construct the
universal family, so that $\omega =0$, and the categorical
representability in dimension 2 follows. Recalling Theorem
\ref{linerational}, we get that if $W$ contains a linear space of
dimension $(m-1)$ over $Y$ (and hence a line), it is rational.
\end{proof}

Proposition \ref{prop:mod-theoretical} generalizes (and gives a
different proof of) Lemma \ref{lemma:lineistrivial}.

Recall that in \cite{bondal_orlov:semiorthogonal}, it is shown that if $Z$ is a
smooth intersection of quadric fourfolds and contains a line (which
is always the case over an algebraically closed field of
characteristic zero), then there is a direct way to construct a
universal family of spinor bundles on $Z$. In the relative case this
translates into another proof of Proposition
\ref{prop:mod-theoretical}.

Consider now $X \to \PP^1$ the intersection of two generic quadric
fibrations of relative dimension 4 (that is, $m=2$ in the above
discussion).  The family $T \to \PP^1$ factors through $T \to S \to
\PP^1$, where $T \to S$ is the discriminant double cover of the
associated linear span quadric fibration.  We obtained an equivalence
$\Db(T,\omega) \simeq \cat{A}_X$ providing another description of
$\cat{A}_X$.  Some natural questions remain.

\begin{question}
\label{question4} 
Let $\pi: X \to \PP^1$ be a generic relative complete intersection of two
four-dimensional quadric fibrations over an algebraically closed field and $T \to S$ the associated
discriminant cover.
\begin{enumerate}
\item Are $\beta, \omega \in \Br(T)$ the same Brauer class? 
\item Does $X$ contain a line over $\PP^1$ if and only if $\omega$ is trivial?
\item Is $X$ rational if and only if it contains a line over $\PP^1$?
\end{enumerate}
\end{question}

We conjecture that the answer to the first (hence also the second)
question is positive, whereas it is expected that the third question
has a negative answer.

\subsection{An explicit fourfold example}

The aim of this section is to give an explicit example of a rational
fourfold fibered over $\PP^1$ in intersections of two
four-dimensional quadrics.  As before it will be embedded in a $\PP^5$
bundle $\PP(E)$ over $\PP^1$.

\smallskip

Let us first develop the geometric construction over a point $p$. Let
$X_p$ be the intersection of two quadrics in $\PP(E)_p \simeq \PP^5$.
First recall that, if $X_p$ contains a line $L$, then the projection
off $L$ is a rational morphism from $X_p$ into $\PP^5$, inducing a birational
map between $X_p$ and a $\PP^3$ (see
\ref{linerational}).  The corresponding birational map decomposes as
the blow-up of $L$ followed by the contraction of a surface swept by
lines intersecting $L$ onto a curve of genus 2 and degree 5 in $\PP^3$
(see \cite[Prop.~3.4.1(ii)]{isko_fano}). Notably the exceptional
divisor is sent onto a quadric surface in $\PP^3$ containing the genus
2 curve.

\smallskip

On the other hand, suppose that we have a curve $B$ of genus 2 and
degree 5 in $\PP^3$, for example embedded by the linear system $\vert
2K_B+ D \vert$, where $K_B$ is the canonical divisor and $D$ a degree
one divisor. Then standard computations, that we omit, show that the
birational inverse map from $\PP^3$ onto $X_x$ is given by the ideal
of cubics in $\PP^3$ containing the curve $B$. By easy Riemann--Roch
computations (see for instance \cite[\S3]{bertin:trisecants}) $B$ is
contained in a quadric surface and it comes:

\begin{enumerate}
\item[\textit{a)}] either as a divisor of type (2,3) on a smooth quadric surface (the quadric surface is the
trisecant scroll of $B$)
\label{case1}

\item[\textit{b)}] or as the following construction: the intersection of a quadric cone of corank 1 with a
cubic surface passing through
the singularity is a sextic curve that decomposes in two components. The first is a line, a ruling
of the cone. The second,
residual to the line, is our curve $B$. Also in this case, the quadric cone is the trisecant
scroll of $B$.
\label{case2}
\end{enumerate}

The two different kinds of genus 2 degree 5 curves in $\PP^3$
correspond to the different values of the normal sheaf
$N_{L/X_p}$. That is: in the generic case, the normal sheaf is $\ko_L
\oplus \ko_L$. In this case the exceptional divisor of $L$ is $\PP^1
\times \PP^1$ and it is isomorphically sent onto the smooth quadric
surface of case \textit{a)} by the projection off $L$. On the other
hand, we can also have a normal sheaf equal to $\ko_L(1) \oplus
\ko_L(-1)$.  In this case the exceptional divisor is isomorphic to
$\mathbb{F}_2$, that is the desingularization of the quadric cone that
contains $B$ in case \textit{b}).  There are no other possible normal
bundles (see for example \cite[Lemma~3.3.4]{isko_fano}). In both cases
the map given by cubics contracts the trisecant scroll (or its
desingularization) onto the line $L\subset X_x$. Note that if we work
over complex numbers, $X$ always contains a line.

A semiorthogonal decomposition 
$$
\Db(X) = \langle \Db(B), \ko_X, \ko_X (1) \rangle
$$
of the derived category of the intersection of two even dimensional
quadrics was described in \cite{bondal_orlov:semiorthogonal} over
$\CC$ by an explicit Fourier--Mukai functor. Homological projective
duality gives the decomposition over any field
\cite[Cor.~5.7]{kuznetsov:quadrics}.  In case \textit{a)}, we can
give an alternative proof based on the explicit description and
mutations.

\smallskip

Suppose we are in case \eqref{case1} and consider the diagram:
$$
\xymatrix{
& E \ar[dl] \ar@{^{(}->}[r] & W \ar[dl]_{\rho} \ar[dr]^{\varepsilon} & D \ar@{_{(}->}[l]_{i} \ar[dr]^{\pi} \\
B \ar@{^{(}->}[r] & \PP^3 & & X & L \ar@{_{(}->}[l] }
$$
where $\rho: W \to \PP^3$ is the blow-up of the hyperelliptic curve with exceptional divisor
$E$ and $\varepsilon: W \to X$ is the blow-up of the line $L$ with exceptional divisor $i: D \hookrightarrow W$. Let $\pi: D \to L$ be the $\PP^1$-bundle map.
Let us denote by $H=\rho^* \ko_{\PP^3}(1)$ and by $h= 
\varepsilon^*\ko_X(1)$ the two generators of $\Pic(W)$. Then by the previous description
we have $h = 3H - E$ and $H = h - D$, which give $E = 3H-h$, and $D=h-H$. Finally, we
have $\omega_{W} = -4H + E = -H-h$.

Now consider the semiorthogonal decomposition $\Db(\PP^3) = \langle \ko_{\PP^3}(-2),
\dotsc, \ko_{\PP^3}(1) \rangle$. By the blow-up formula we get:
$$
\Db(W) = \langle \Phi \Db(B), -2H, -H, \ko_W, H \rangle,
$$
where $\Phi$ is a fully faithful functor. Mutating $H$ to the left with respect of its
orthogonal complement, we get
$$
\Db(W) = \langle -h, \Phi \Db(B), -2H, -H, \ko_W \rangle,
$$
using \cite[Prop.~3.6]{bondal_kapranov:reconstructions} and $\omega_W = -H -h$. Now mutating $\Phi \Db(B)$ to the right
with respect to $\langle -2H, -H \rangle$, we get a fully faithful functor $\Phi' =
\Phi \circ R_{\langle -2H, -H \rangle} : \Db(B) \to \Db(W)$ and a semiorthogonal
decomposition:
\begin{equation}\label{semiorthogonal1}
\Db(W)= \langle -h, -2H, -H, \Phi' \Db(B), \ko_W \rangle.
\end{equation}
On the other hand, consider the decompositions $\Db(X) = \langle \cat{A}_X, \ko_X, \ko_X(1) \rangle$
and $\Db(L) = \langle \ko_L, \ko_L(1) \rangle$. By the blow-up formula \cite[Thm. 4.3]{orlov:blowup}, we get the following
semiorthogonal decomposition:
$$
\Db(W) = \langle i_* (\pi^* \ko_L \otimes \ko_{\pi}(-1)), i_* (\pi^* \ko_L(1)
 \otimes \ko_{\pi}(-1)), \cat{A}_X, \ko_W, h \rangle,
$$
where we identified, up to the equivalence $\varepsilon^*$ the category $\cat{A}_X$
with its pull-back to $\Db(W)$.

Now notice that $\ko_{\pi}(1) = H_D$. Then we can rewrite the semiorthogonal decomposition as:
$$
\Db(W) = \langle \ko_D(-H), \ko_D(h-H), \cat{A}_X, \ko_W, h \rangle.
$$
First, mutate $\cat{A}_X$ to the left with respect to $\ko_D(h-H)$ to get
$$
\Db(W) = \langle \ko_D(-H),\cat{A}_X, \ko_D(h-H), \ko_W, h \rangle,
$$
where we identify, up to equivalence, $\cat{A}_X$ with the image of the mutation.
Notice that $\ko_D(h-H) = \ko_D(D)$. It is standard to check, by definition
of mutation and using the evaluation sequence of the smooth divisor $D=h-H$, that the mutation of the pair
$\langle \ko_D(D), \ko_W \rangle$ gives the pair $\langle \ko_W, D \rangle=\langle \ko_W, h-H \rangle$.
Performing this mutation we get then:
$$
\Db(W) = \langle \ko_D(-H),\cat{A}_X, \ko_W, h-H,  h \rangle
$$
Now mutate $\langle h-H, h \rangle$, recalling $\omega_W = -H-h$, to the left with respect to
its orthogonal complement to get
$$
\Db(W) = \langle -2H, -H,\ko_D(-H),\cat{A}_X, \ko_W \rangle.
$$
Once again, by definition of mutation and the evaluation sequence, up to a twist with $-H$,
the mutation of the pair $\langle -H, \ko_D(-H) \rangle$ gives the pair
$\langle -H-D, -H \rangle$, and then the decomposition
\begin{equation}\label{semiorthogonal2}
\Db(W) = \langle -2H, -h, -H, \cat{A}_X, \ko_W \rangle,
\end{equation}
since $D = h-H$. Comparing the decompositions \eqref{semiorthogonal1} and \eqref{semiorthogonal2},
we get that the pair $\langle -h, -H \rangle$ is completely orthogonal and
then we get the required equivalence between $\cat{A}_X$ and $\Db(B)$.

\smallskip

Let us come to the relative setting. Suppose that we have a rational
family in the universal degree 1 Picard variety $\PIC^1$ over the
moduli space $\mathcal{M}_2$ of genus two curves. This is
just a rational family of couples $(B_t,D_t)_{t \in\PP^1}$, where
$D_t$ is a degree one divisor on the genus two curve $B_t$.
Call $\mathbb{B}$ the total space of the family $B_t$ and
$\tau: \mathbb{B} \rightarrow \PP^1$ the projection. Then $B_t$ is
embedded as a family of degree 5 curves in the rank 3 projective
bundle $\PP(U) =\PP(\tau_*(2K_{B_t}+D_t))$ over $\PP^1$. We can suppose
that we have chosen $(B_t,D_t)$ so that that each curve $B_t$
has a smooth trisecant scroll in $\PP(U_t)$ as in case \eqref{case1}.

By our preceding remarks, if we apply to $\PP(U)$ the birational
transformation given by relative cubics in $\PP(U)$ vanishing on
$\mathbb{B}$, we get a fourfold $\cX$, fibered over $\PP^1$ in
intersections of quadrics. Moreover, by construction, $X$ is clearly
rational and $\mathbb{B}$ is a fibration in hyperelliptic curves. Now
we use Orlov's formula \cite[Thm. 4.3]{orlov:blowup} for the derived
category of the blow-up of a smooth projective variety in order to get
a semiorthogonal decomposition of the derived category of $\cX$.
Moreover, by mutations, we can prove the strong version of Conjecture
\ref{kuzne-type-conj}. We note that the trisecant lines of the curves
in the family $B_t$ make up a quadric surface fibration $\cQ$
contained in $\PP(U)$. Let $\varphi$ the rational map defined on
$\PP(U)$ by the ideal of relative cubics containing $\mathbb{B}$.

\begin{theorem}\label{thm-with-mutations}
Let $\cX$ be the rational fourfold obtained as the image of $\varphi$. Then $\cX$
is categorically representable in dimension 2 as follows:
$$
\Db(\cX) = \langle \Db(\mathbb{B}), E_1, E_2, E_3 , E_4 \rangle,
$$
where $E_i$ are exceptional objects. Moreover, there is an equivalence 
$$
\cat{A}_\cX\simeq \Db(\mathbb{B}).
$$
\end{theorem}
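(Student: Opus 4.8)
The plan is to run the same birational-geometry-plus-mutations argument from the absolute case in \S\ref{section:4fold} (case \eqref{case1}), carried out relatively over $\PP^1$. First I would set up the relative version of the diagram preceding \eqref{semiorthogonal1}: let $\rho \colon \mathcal W \to \PP(U)$ be the blow-up of $\mathbb B$ with exceptional divisor $\mathcal E$, and let $\varepsilon \colon \mathcal W \to X$ be the blow-up of the codimension-$2$ locus $\Sigma \subset X$ onto which the trisecant scroll $\Omega$ is contracted by $\varphi$, with exceptional divisor $i\colon \mathcal D \hookrightarrow \mathcal W$. (This $\Sigma$ is the relative analogue of the line $L$; here it is a surface generically ruled over $\PP^1$, so $X$ contains a line over $\PP^1$.) As in the absolute computation one has a common resolution with the two natural divisor classes $H = \rho^*\ko_{\PP(U)/\PP^1}(1)$ and $h = \varepsilon^*\ko_{X/\PP^1}(1)$, related by $h = 3H - \mathcal E$, $H = h - \mathcal D$, and $\omega_{\mathcal W/\PP^1} = -H - h$; the same relations hold fiberwise by the point-case analysis, and hence globally since $\Pic$ of these projective bundles and blow-ups is generated (relatively) by these classes together with $\pi^*\Pic(\PP^1)$, which plays no role in the mutations.

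**Key steps.** With the setup in place, I would apply Orlov's blow-up formula \cite{orlov:blowup} twice, exactly mimicking \S\ref{section:4fold}. Starting from the relative Beilinson-type decomposition
$$
\Db(\PP(U)) = \langle \pi^*\Db(\PP^1)(-2), \pi^*\Db(\PP^1)(-1), \pi^*\Db(\PP^1), \pi^*\Db(\PP^1)(1) \rangle,
$$
the blow-up $\rho$ yields $\Db(\mathcal W) = \langle \Phi\Db(\mathbb B), -2H, -H, \ko_{\mathcal W}, H \rangle$ (suppressing the $\pi^*\Db(\PP^1)$-blocks, which are carried along formally), and performing the same sequence of mutations as in the absolute case produces the analogue of \eqref{semiorthogonal1}:
$$
\Db(\mathcal W) = \langle -h, -2H, -H, \Phi''\Db(\mathbb B), \ko_{\mathcal W} \rangle.
$$
On the other hand, blowing up $\Sigma$ via $\varepsilon$ and using $\cat A_X$ from \eqref{deco-fourfold} together with the relative decomposition of $\Db(\Sigma)$ (which, since $\Sigma \to \PP^1$ is a $\PP^1$-bundle, or at least generically one, after the relevant normalization carries a two-step exceptional-type decomposition), one runs the dual chain of mutations — using repeatedly the evaluation sequence of the smooth divisor $\mathcal D$ and $\ko_{\pi}(1) = H|_{\mathcal D}$ just as in the absolute argument — to reach the analogue of \eqref{semiorthogonal2}:
$$
\Db(\mathcal W) = \langle -2H, -h, -H, \cat A_X, \ko_{\mathcal W} \rangle.
$$
Comparing the two expressions, the pair $\langle -h, -H \rangle$ is completely orthogonal in both, so the remaining blocks match up, giving an equivalence $\cat A_X \simeq \Db(\mathbb B)$. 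Feeding this into \eqref{deco-fourfold} gives $\Db(X) = \langle \Db(\mathbb B), E_1, \dots, E_4 \rangle$ with $E_1 = \ko_{X/\PP^1}(1)$, $E_2 = \pi^*\ko_{\PP^1}(1)\otimes\ko_{X/\PP^1}(1)$, and $E_3, E_4$ the two further exceptional objects coming from $\pi^*\Db(\PP^1)$; since $\mathbb B \to \PP^1$ is a surface (a fibration in genus-$2$ curves), $X$ is categorically representable in dimension $2$.

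**Main obstacle.** I expect the genuinely delicate point to be not the mutation bookkeeping — which is word-for-word the absolute case with every line bundle replaced by its relative twist and every exceptional pair replaced by a $\pi^*\Db(\PP^1)$-block — but rather justifying that the relative geometric picture is uniform enough: namely that the family $(B_t, D_t)$ can be chosen so that \emph{every} fiber $X_t$ is in case \eqref{case1} (smooth trisecant scroll, normal bundle $\ko_L \oplus \ko_L$), so that $\Omega \to \PP^1$ is an honest smooth quadric surface bundle and $\varepsilon$ really is the blow-up of a smooth center $\Sigma$ with $\mathcal D \to \Sigma$ a $\PP^1$-bundle. This is a genericity statement about the chosen rational family in $\PIC^1$ over $\mathcal M_2$; granting it, the blow-up formulas apply verbatim and the rest is formal. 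One also needs to check that the two divisorial identities among $H, h, \mathcal E, \mathcal D$ genuinely globalize (not just fiberwise) — this follows from the description of $\Pic(\mathcal W)$ in terms of $\rho$ and $\varepsilon$, but should be stated. The rationality of $X$ is immediate from the construction (it is birational to the projective bundle $\PP(U) \to \PP^1$ via $\varphi$), consistent with Proposition~\ref{linerational} and Lemma~\ref{lemma:lineistrivial}.
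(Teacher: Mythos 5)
Your proposal is correct and follows essentially the same route as the paper: describe the common resolution $W$ both as the blow-up of $\PP(U)$ along $\mathbb{B}$ and as the blow-up of $X$ along the surface onto which $\Omega$ is contracted, apply Orlov's formula to get two semiorthogonal decompositions of $\Db(W)$, and repeat the absolute-case mutations relatively over $\PP^1$ (with each line bundle on $\PP^3$ replaced by a $\pi^*\Db(\PP^1)$-block) to match $\cat{A}_X$ with $\Db(\mathbb{B})$. The genericity issue you flag is handled in the paper by hypothesis — the family $(B_t,D_t)$ is chosen so that every fiber is in the smooth-trisecant-scroll case — so no further argument is needed there.
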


\begin{proof}
The fourfold $\cX$ is obtained by first blowing-up $\PP(U)$ along $\mathbb{B}$ and then contracting $\cQ$ onto a ruled
surface $\mathbb{F}$.
The quadric fibration $\cQ$ is the exceptional divisor over $\mathbb{F}$ when one blows-up $\cX$ along $\mathbb{F}$.
Let us denote by $\cW$ the blow-up of $\PP(U)$ along $\mathbb{B}$, which is naturally isomorphic to the blow-up of $\cX$ along $\mathbb{F}$.

The two assertions are proved by comparing the two semiorthogonal decompositions of $\Db(\cW)$
induced by the two descriptions of $\cW$, as the blow-up on one hand of $\PP(U)$ and on the other of $\cX$. To this
end, recall that the derived category of $\PP^1$ admits a semiorthogonal decomposition by two categories generated by exceptional line bundles.
we get two semiorthogonal
decompositions of $\Db(\cW)$ given by exceptional objects with orthogonal complements respectively
$\Db(\mathbb{B})$ and $\cat{A}_\cX$. 

In order to get the required equivalence explicitly, one has to perform the same mutations as before in the relative setting.
Indeed one has a diagram
$$
\xymatrix{
& \cE \ar[dl] \ar@{^{(}->}[r] & \cW \ar[dd]^{\alpha}\ar[dl]_{\rho} \ar[dr]^{\varepsilon} & \cQ \ar@{_{(}->}[l]_{i} \ar[dr]^{\pi} \\
{\mathbb{B}} \ar@{^{(}->}[r] \ar[drr]_{\tau} & \PP(U) \ar[dr]^{p} & & \cX \ar[dl]_{q} & {\mathbb{F}} \ar[dll] \ar@{_{(}->}[l] \\
& & \PP^1\,.}
$$
We will keep the same notations for the relative line bundles in ${\mathrm{Pic}}(\cX/\PP(U))$ that
we used for ${\mathrm{Pic}}(X)$ in the previous mutations. Moreover, notice that $\rho \circ p = \alpha = \varepsilon \circ q$.
Let us moreover introduce, for $L$ in ${\mathrm{Pic}}(\cX/\PP^1)$, the notation $\alpha^* \Db(\PP^1) (L)$
for the admissible category whose objects are of the form $\alpha^*A \otimes L$, where $A$ is in $\Db(\PP^1)$.
So we end up with the following semiorthogonal decomposition (a relative version of \eqref{semiorthogonal1}):
\begin{equation}\label{eq-deco-rela-1}
\Db(\cW)= \langle \Phi \Db({\mathbb{B}}), \alpha^\ast \Db(\PP^1)(-2H), \alpha^\ast \Db(\PP^1) (-H),\alpha^\ast \Db(\PP^1),
\alpha^\ast \Db(\PP^1)(H) \rangle,
\end{equation}
by blowing up $\PP(U)$ along $\mathbb{B}$, where $\Phi$ is a fully faithful functor.
On the other hand, recall that $\Db(\mathbb{F})$ decomposes into two copies of $\Db(\PP^1)$, and consider
the blow-up formula for $\cW \to \cX$. In order to complete the proof, notice that we can
perform the same mutations as before in the relative setting in the following
sense: any occurrence of $\alpha^* \Db(\PP^1)$ can be replaced by two exceptional objects (actually, two line bundles in
$\alpha^* {\mathrm{Pic}}(\PP^1)$). The previous mutations carry on on these exceptional sets: whenever we
act by a mutation inside $\alpha^* \Db(\PP^1)$, this is just changing the choice of the pair of line bundle decomposing
it. The mutations involving relative line bundles carry on just as before. We end up hence with the following
semiorthogonal decomposition (a relative version of \eqref{semiorthogonal2}):
\begin{equation}\label{eq-deco-rela-2}
\Db(\cW)= \langle \alpha^\ast \Db(\PP^1)(-2H), \alpha^\ast \Db(\PP^1)(-h), \alpha^\ast \Db(\PP^1)(-H),
\cat{A}_\cX, \alpha^\ast \Db(\PP^1) \rangle.
\end{equation}
Comparing via mutations the decompositions \eqref{eq-deco-rela-1} and \eqref{eq-deco-rela-2} we get the proof.
\end{proof}
\begin{remark}
Notice that the proof of Theorem~\ref{thm-with-mutations} applies
whenever we consider the base to be a rational smooth projective
variety whose derived category admits a full exceptional sequence of vector bundles, as for
example any $\PP^n$ or any smooth quadric hypersurface.
\end{remark}

\begin{remark}
Theorem~\ref{thm:well-known} has already been (implicitly) used by
some authors to describe the birational geometry of high dimensional
varieties.  Namely, if a cubic fourfold contains a plane, then it is
birational to a quadric surface bundle over $\PP^2$.  The study of the
Brauer class associated to this quadric bundle has been developed in
\cite{hassett:rational_cubic} (implicitly) and
\cite{kuznetsov:cubic_fourfolds}.  In Appendix
\ref{subsec:grassmannian}, we verify that the Brauer class considered
in \cite{hassett:rational_cubic} is the same as the one considered
here and in \cite{kuznetsov:cubic_fourfolds}.
\end{remark}

\appendix

\section{A comparison of even Clifford algebras}
\label{appendix_clifford}

In his thesis, Bichsel~\cite{bichsel:thesis} constructed an even
Clifford algebra of a line bundle-valued quadratic form on an affine
scheme using faithfully flat descent.
Parimala--Sridharan~\cite[{\S4}]{parimala_sridharan:norms_and_pfaffians}
generalized this construction to any scheme.
Bichsel--Knus~\cite{bichsel_knus:values_line_bundles} and
Caenepeel--van Oystaeyen~\cite{caenepeel_van_oystaeyen} gave
constructions of a \emph{generalized} or $\ZZ$-graded Clifford
algebra, of which the even Clifford algebra is the degree zero piece.
These constructions are detailed in \cite[\S1.8]{auel:clifford}.

Independently, Kapranov~\cite[\S4.1]{kapranov:derived} introduced a
\emph{homogeneous} Clifford algebra, which was further developed by
Kuznetsov~\cite[\S3]{kuznetsov:quadrics}, to study the derived
category of projective quadrics and quadric fibrations.  In this
context, the even Clifford algebra is defined as a certain limit of
the graded pieces. In this appendix, we will show that our
construction of the even Clifford algebra coincides with that of
Kuznetsov.

First, we would like to point out some differences between our
conventions in \S\ref{sec:Quadratic_forms} and those in
\cite[\S3]{kuznetsov:quadrics}.  The value line bundle $L$ differs by
a dual and all quadratic forms are assumed to be primitive in
\cite[\S3]{kuznetsov:quadrics}.  Also, certain noncanonical
isomorphisms involving symmetric squares and dualization are used
(thereby requiring the standing hypothesis of working in
characteristic zero), and the definition of the Clifford algebra of a
quadratic form over a field given in \cite[\S2.4]{kuznetsov:quadrics}
is incorrect if 2 is not invertible.

Let $S$ be a scheme and $(E,q,L)$ be a quadratic form over $S$ (see
\S\ref{subsec:Quadratic_forms_and_quadric_fibrations}).  Kuznetsov
defines the \linedef{homogeneous Clifford algebra} as the graded
quotient
$$
\CliffKuz = T(E)/I =
\bigoplus_{n \geq 0} T^n(E)/I_n = \bigoplus_{n \geq 0} \CliffKuz_n
$$
of the tensor algebra of $E$ by the homogeneous ideal $I =
\bigoplus_{n\geq 0} I_n$ generated by 
$$
I_2 = \ker(q : S_2E \to L) \subset T^2(E).
$$  
We point out that $\CliffKuz_0=\OO_S$, $\CliffKuz_1
= E$, and that $\CliffKuz_2$ fits into a commutative diagram
\begin{equation}
\label{eq:B2}
\begin{split}
\xymatrix@R=12pt@C=30pt{
0\ar[r]&S_2 E\ar[d]^q \ar[r]&T^2(E)\ar[d]^{\vp_2}\ar[r]&\exterior^2
E\ar@{=}[d]\ar[r]&0\\ 
0\ar[r]&L\ar[r]&\CliffKuz_2\ar[r]&\exterior^2 E\ar[r]&0
}
\end{split}
\end{equation}
of locally free $\OO_S$-modules, where $\vp = \oplus_{n \geq 0} \vp_n$
is the quotient map $T(E) \to \CliffKuz$.  Via the subbundle $L \to
\CliffKuz_2$ there is a canonical morphism $\CliffKuz_{n} \tensor L
\to \CliffKuz_{n+2}$ for every $n \geq 0$.  By induction, there is a
canonical monomorphism $L^{\tensor m} \to \CliffKuz_{2m}$ for all $m
\geq 0$.  Thus there is an monomorphism $S(L) = \oplus_{n \geq 0}
L^{\tensor n} \to \CliffKuz$ of degree 2, whose image is actually
central, see \cite[Lemma~3.6]{kuznetsov:quadrics}.  Also, there is a
canonical morphism $\CliffKuz_{2m} \tensor L\dual{}^{\tensor m} \to
\CliffKuz_{2(m+1)}\tensor L\dual{}^{\tensor (m+1)}$ for every $m \geq
0$.  Kuznetsov~\cite[\S3.3]{kuznetsov:quadrics} defines the even
Clifford algebra as the colimit
$$
\mathcal{B}_0 = \varinjlim \CliffKuz_{2m}\tensor L\dual{}^{\tensor m}
$$
of this directed system of $\OO_S$-module morphisms.

\begin{proposition}
\label{prop:compare}
Let $(E,q,L)$ be a quadratic form of rank $n$ on a scheme $S$.  The
even Clifford algebras, $\CliffAlg_0$ as defined in
\S\ref{subsec:Even_Clifford_algebra} and $\mathcal{B}_0$ as defined in
\cite[\S3.3]{kuznetsov:quadrics}, are isomorphic $\OO_S$-algebras.
\end{proposition}
\begin{proof}
Tensoring the quotient morphism $\vp_2 : T^2(E) \to \CliffKuz_2$ by
$L\dual$, there is an induced morphism $\psi : T^2(E) \tensor L\dual
\to \mathcal{B}_0$.  We will verify that $\psi$ satisfies the
universal property of $\CliffAlg_0$, deducing the existence of an
$\OO_S$-algebra morphism $\Psi : \CliffAlg_0 \to \mathcal{B}_0$.

First, we observe that by diagram \eqref{eq:B2}, $\psi(v \tensor v \tensor
f) = q(v) \tensor f$ in $L \tensor L\dual \to \CliffKuz_2 \tensor
L\dual$, hence $\psi(v \tensor v \tensor f) = f(q(v))$ under the
identification $L \tensor L\dual = \OO_S \subset \mathcal{B}_0$ with
the identity in the limit.

Second, we calculate that
\begin{align*}
\psi(u \tensor v \tensor f) \, \psi(v \tensor w \tensor g) & {}= 
\vp_2(u \tensor v) \tensor f \tensor \vp_2(v \tensor w) \tensor g = 
f\tensor \vp_4(u\tensor v \tensor v \tensor w) \tensor g \\
& {} = f \tensor q(v) \tensor \vp_2(u \tensor w) \tensor g = f(q(v)) \,
\psi(u \tensor w \tensor g)
\end{align*}
using the facts that $L \subset \CliffKuz_2$ is central in
$\CliffKuz$, the quotient map $\vp = \oplus \vp_n$ is a graded
morphism, and that $\vp_4$ evaluated on the submodule $E \tensor S_2 E
\tensor E$ factors through $L \tensor \CliffKuz_2$.

Hence $\psi$ satisfies the universal property of $\CliffAlg_0$, so
there exists an $\OO_S$-algebra morphism $\Psi : \CliffAlg_0 \to
\mathcal{B}_0$.  Since these algebras are locally free of the same
rank, it suffices to check that $\Psi$ is an isomorphism on fibers.
Hence we are reduced to the case where $S$ is the spectrum of a field
$k$.  Choosing a generator $L = lk$ and a $k$-basis $e_1,\dotsc,e_n$
of $E$, then by its construction, $\CliffAlg_0$ has a $k$-basis
consisting of $e_{i_1}\dotsm e_{i_2m} l^{-m}$ where $1 \leq i_1 <
\dotsm < i_{2m} \leq n$.  For $2m \geq n$, the limit stabilizes and
$\mathcal{B}_0 = \CliffKuz_{2m}\tensor L\dual{}^{\tensor m}$, showing
that $\mathcal{B}_0$ has a $k$-basis of the same shape.  Since $\psi$
preserves basis elements of the shape $e_{i_1} e_{i_2} l\inv$ by its
very definition, $\Psi$ will preserve the shape of the entire basis.
Hence $\Psi$ is an isomorphism.
\end{proof}

Kuznetsov~\cite[\S3.3]{kuznetsov:quadrics} also defines the Clifford
bimodule (``the odd part of the Clifford algebra'') as the colimit
$$
\mathcal{B}_1 = \varinjlim \CliffKuz_{2m+1}\tensor L\dual{}^{\tensor
m}
$$
of the directed system of $\OO_S$-module morphisms $\CliffKuz_{2m+1}
\tensor L\dual{}^{\tensor m} \to \CliffKuz_{2(m+1)+1} \tensor
L\dual{}^{\tensor (m+1)}$ defined similarly as above.  Arguing
similarly as in the proof of Proposition~\ref{prop:compare}, there is
an isomorphism of Clifford bimodules $\CliffAlg_1$ and $\mathcal{B}_1$
equivariant for the above isomorphism of even Clifford algebras.

\medskip

As constructed, $\CliffKuz$ is a quadratic $\OO_S$-algebra in the
sense of \cite[Ch.~1,~\S2]{poli-posi}.  There is another quadratic
$\OO_S$-algebra associated to $(E,q,L)$:\ the coordinate
algebra $\CoordKuz = \bigoplus_{l \geq 0} \pi\pushforward
\OO_{Q/S}(l)$.  The cohomology of the exact sequence
\eqref{eq:sequnce-quadric-fibration} tensored by $\OO_{Q/S}(l-1)$,
implies, noting that $R^1\pi\pushforward\OO_{Q/S}(l)=0$, that
$\pi\pushforward \OO_{Q/S}(l) \isom
S^l(E\dual)/\bigl(S^{l-2}(E\dual)\tensor s_q\bigr)$.  Dualizing the
diagram \eqref{eq:B2}, we see that $\CliffKuz_2\dual \subset
T^2(E\dual)$ is the ideal of relations of degree 2 in $\CoordKuz$.  In
fact, the quotient $T(E\dual) \to \CoordKuz$ has ideal of relations
generated in by $\CliffKuz_2\dual$.  Hence $\CoordKuz$ is the
quadratic dual $\OO_S$-algebra to $\CliffKuz$ in the terminology of
\cite[Ch.~1,~\S2,~Def.~1]{poli-posi}.  In fact, both $\CoordKuz$ and
$\CliffKuz$ are Koszul algebras (cf.\
\cite[Ch.~2,~\S1,~Def.1]{poli-posi}), which can be proved as in
\cite[\S6]{poli-posi}.

\medskip

The following list of properties of the even Clifford algebra are all
proved by descent, using the corresponding classical properties of
even Clifford algebra of $\OO_S$-valued forms, see
\cite[\S1.8]{auel:clifford}.

\begin{proposition}
\label{prop:properties}
Let $(E,q,L)$ be a quadratic form of rank $n$ on $S$.
\begin{enumerate} \setlength{\itemsep}{2pt}

\item \label{C_0-center} Assume that $(E,q,L)$ is (semi)regular.  If
$n$ is odd, then $\CliffAlg_0(E,q,L)$ is a central $\OO_S$-algebra.
If $n$ is even, then the center $\CliffZ(E,q,L)$ of
$\CliffAlg_0(E,q,L)$ is an \'etale quadratic $\OO_S$-algebra.

\item \label{C_0-Azumaya} If $(E,q,L)$ is (semi)regular and $n$ is
odd, then $\CliffAlg_0(E,q,L)$ is an Azumaya $\OO_S$-algebra.  If
$(E,q,L)$ is regular and $n$ is even, then $\CliffAlg_0(E,q,L)$ is an
Azumaya algebra over its center.

\item \label{C_0-similarity}
Any similarity $(\vp, \lambda) : (E,q,L) \to (E',q',L')$ induces
an $\OO_S$-algebra isomorphism
$$
\CliffAlg_0(\vp, \lambda) : \CliffAlg_0(E,q,L) \to
\CliffAlg_0(E',q',L').
$$

\item \label{C_0-proj_similarity}
Any $\OO_S$-module isomorphism $\phi : N^{\tensor 2}\tensor L
\to L'$ induces an $\OO_S$-algebra isomorphism
$$
\CliffAlg_0(N\tensor E,\phi \circ (q_{N}\tensor q),L')
\to \CliffAlg_0(E,q,L)
$$
where $q_N : N \to N^{\tensor 2}$ is the canonical quadratic form.

\item \label{C_0-functoriality}
 For any morphism of schemes $g : S' \to S$, there is a canonical
$\OO_S$-module isomorphism
$$
g\pullback\CliffAlg_0(E,q,L) \to \CliffAlg_0(g\pullback(E,q,L)).
$$
\end{enumerate}
\end{proposition}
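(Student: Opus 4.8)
The plan is to observe that every assertion in the proposition is either a purely formal consequence of the tensorial construction $\CliffAlg_0(E,q,L) = T(E\tensor E\tensor L\dual)/(J_1+J_2)$ together with its universal property, or else a statement that is local on $Y$ and hence may be verified after trivializing $L$ Zariski-locally, at which point it reduces to one of the standard facts about even Clifford algebras of $\OO_Y$-valued quadratic forms recorded in \cite[\S1.8]{auel:clifford} (see also \cite[Ch.~IV]{knus:quadratic_hermitian_forms} and \cite[Ch.~II]{book_of_involutions}).

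First I would dispose of the formal assertions. The canonical morphism $E\tensor E\tensor L\dual \to \CliffAlg_0(E,q,L)$ in (1) is built into the construction. For (6): the tensor algebra functor $M \mapsto T(M)$ is a left adjoint, hence commutes with the (monoidal, right-exact) pullback $g\pullback$; since $g\pullback$ carries the generating sections of $J_1+J_2$ to the generating sections of the corresponding ideals for $g\pullback(E,q,L)$, passing to quotients yields the desired canonical $\OO_{Y'}$-algebra isomorphism $g\pullback\CliffAlg_0(E,q,L) \isom \CliffAlg_0(g\pullback(E,q,L))$ for an arbitrary morphism $g : Y' \to Y$. For (4): a similarity $(\vp,\lambda)$ furnishes an $\OO_Y$-module isomorphism $\vp\tensor\vp\tensor(\lambda\dual)\inv : E\tensor E\tensor L\dual \to E'\tensor E'\tensor L'\dual$, and the identity $q'(\vp(v)) = \lambda\circ q(v)$ shows it carries $J_1$ and $J_2$ onto the ideals defining $\CliffAlg_0(E',q',L')$; the universal property then produces $\CliffAlg_0(\vp,\lambda)$. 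Assertion (5) is the ``lax'' analogue: under the canonical identification $(N\tensor E)^{\tensor 2}\tensor L'\dual \isom E^{\tensor 2}\tensor(N^{\tensor 2}\tensor L'\dual)\isom E\tensor E\tensor L\dual$ induced by $\phi$, the defining ideals of $\CliffAlg_0(N\tensor E,\phi\circ(q_N\tensor q),L')$ and of $\CliffAlg_0(E,q,L)$ correspond, and the claimed isomorphism follows.

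Next, for the local freeness and rank $2^{n-1}$ in (1), I would use the $\OO_Y$-module filtration $\OO_Y = F_0 \subset F_2 \subset \dotsm \subset F_{2m} = \CliffAlg_0(E,q,L)$ with graded pieces $F_{2i}/F_{2(i-1)} \isom \exterior^{2i}E\tensor(L\dual)^{\tensor i}$ (itself established, as above, by trivializing $L$): each graded piece is locally free of rank $\binom{n}{2i}$, so the filtration splits Zariski-locally and $\CliffAlg_0(E,q,L)$ is locally free of rank $\sum_i\binom{n}{2i}=2^{n-1}$. For (2) and (3), note that ``the center is an \'etale quadratic $\OO_Y$-algebra,'' ``$\CliffAlg_0$ is central,'' and ``$\CliffAlg_0$ is Azumaya over $Y$ (resp.\ over its center)'' are all Zariski-local properties on $Y$ --- in particular the center is a local construction, being the subsheaf $\mathcal{Z}(U)=Z(\CliffAlg_0(U))$. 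Over an open $U$ on which $L$ is trivial, $(E,q,L)|_U$ is an $\OO_U$-valued quadratic form up to a unit twist, and the classical theory applies: in odd rank $\CliffAlg_0$ is a central $\OO_U$-algebra, and is Azumaya when the form is semiregular; in even rank the center is \'etale quadratic when the form is (semi)regular, and $\CliffAlg_0$ is Azumaya over its center when the form has simple degeneration (this last point being the local content of \cite{kuznetsov:quadrics}, cf.\ Proposition~\ref{prop:simple_azumaya}). These local statements glue to the global ones.

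I expect the main difficulty to be organizational rather than substantive. The genuine points of care are: (i) in odd rank one must work with semiregularity rather than regularity, since a form of odd rank is never regular at a characteristic-$2$ point (\cite[IV.3]{knus:quadratic_hermitian_forms}); and (ii) one must know that the even Clifford algebra as defined here, following \cite[II, Lemma~8.1]{book_of_involutions}, coincides with the other constructions in the literature (Bichsel, Bichsel--Knus, Parimala--Sridharan), so that the cited classical results genuinely transfer --- this comparison is precisely what is carried out in \cite[\S1.8]{auel:clifford}, which is what makes it legitimate to say the proposition is proved ``by descent.''
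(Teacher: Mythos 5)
Your proposal is correct and follows the same route as the paper: the paper's own proof simply states that all parts are proved by descent to the classical case of $\OO_Y$-valued forms (citing \cite[IV]{knus:quadratic_hermitian_forms}, \cite{bichsel_knus:values_line_bundles}, and \cite[\S1.8]{auel:clifford}, with the simple-degeneration Azumaya claim delegated to Proposition~\ref{prop:simple_azumaya} and part (5) to \cite[IV~Cor.~7.1.2]{knus:quadratic_hermitian_forms}). You have merely written out the localization and universal-property arguments that the paper leaves implicit, and your points of care (semiregularity in odd rank, comparison of the various constructions of $\CliffAlg_0$) are exactly the ones the cited references address.
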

\begin{proof}
For \eqref{C_0-center} and \eqref{C_0-Azumaya}, see
\cite[IV~Thm.~2.2.3,~Prop.~3.2.4]{knus:quadratic_hermitian_forms} or
\cite[Thm.~3.7]{bichsel_knus:values_line_bundles}.  For
\eqref{C_0-similarity} and \eqref{C_0-proj_similarity}, see
\cite[IV~Props.~7.1.1,~7.1.2]{knus:quadratic_hermitian_forms}.  The
tensorial nature of the construction immediately implies
\eqref{C_0-functoriality}.
\end{proof}

The following list of properties of the Clifford bimodule are all
proved by descent, using the corresponding classical properties of odd
Clifford algebra of $\OO_S$-valued forms, see
\cite[\S4.1]{auel:clifford}.

\begin{proposition}
\label{prop:properties_C_1}
Let $(E,q,L)$ be a quadratic form of rank $n$ on $S$.
\begin{enumerate}  \setlength{\itemsep}{2pt}
\item \label{C_1-C_0-module}
Via multiplication in the tensor algebra, there is a canonical
morphism
$$
m : \kc_1(E,q,L) \otimes_{\kc_0(E,q,L)}
\kc_1(E,q,L) \to \kc_0(E,q,L)\tensor_{\OO_S}L 
$$
of $\kc_0(E,q,L)$-bimodules (with trivial action on $L$).
If $(E,q,L)$ is primitive then $\CliffAlg_1(E,q,L)$ is an invertible
$\CliffAlg_0(E,q,L)$-bimodule and the map $m$ is an isomorphism.

\item \label{C_1-similarity}
Any similarity transformation $(\vp, \lambda) : (E,q,L) \to
(E',q',L')$ induces an $\OO_S$-module isomorphism
$$
\kc_1(\vp, \lambda) : \kc_1(E,q,L) \to
\kc_1(E',q',L').
$$
that is $\kc_0(\vp,\lambda)$-semilinear according to the diagram
$$
\xymatrix{
\kc_1(E,q,L) \otimes_{\kc_0(E,q,L)}
\kc_1(E,q,L)\ar[d]^{\kc_1(\vp,\lambda) \tensor
\kc_1(\vp,\lambda)}\ar[r]^(.6)m&\kc_0(E,q,L)\tensor_{\OO_S}L
\ar[d]^{\kc_0(\vp,\lambda)\tensor\lambda}\\ 
\kc_1(E',q',L') \otimes_{\kc_0(E',q',L')}
\kc_1(E',q',L')\ar[r]^(.6)m&\kc_0(E',q',L')\tensor_{\OO_S}L'
}
$$

\item \label{C_1-projsim} Any $\OO_S$-module isomorphism $\phi : N^{\tensor 2}\tensor L
\to L'$ induces an $\OO_S$-module isomorphism
$$
\CliffAlg_1(N\tensor E,\phi \circ (q_{N}\tensor q),L')
\to N \tensor \CliffAlg_1(E,q,L).
$$

\item \label{C_1-functoriality} For any morphism of schemes $g : S' \to S$, there is a canonical
$\OO_S$-module isomorphism
$$
g\pullback\kc_1(E,q,L) \to \kc_1(g\pullback(E,q,L)).
$$
\end{enumerate}
\end{proposition}
\begin{proof}
For \eqref{C_1-C_0-module}, see \cite[\S2]{bichsel:thesis} or
\cite[Lemma~3.1]{bichsel_knus:values_line_bundles}.  For a proof of
the final statement in \eqref{C_1-C_0-module}, we have the following
local calculation:\ if $(E,q,L)$ is a primitive quadratic form over a
local ring then there exists a line subbundle $N \subset E$ such that
$q|_N$ is regular, and in this case, $N$ generates $\CliffAlg_1$ over
$\CliffAlg_0$.  For \eqref{C_1-similarity}, see
\cite[Prop.~2.6]{bichsel:thesis}.  For \eqref{C_1-projsim}, we can
appeal to \cite[Lemma~3.1]{bichsel_knus:values_line_bundles}.  The
tensorial nature of the construction immediately implies
\eqref{C_1-functoriality}.
\end{proof}

\section{The equality of some Brauer classes related to quadric fibrations}
\label{subsec:grassmannian}

Our perspective has been to use the even Clifford algebra as an
algebraic way of getting at the Brauer class associated to a quadric
fibration.  Other authors, notably in \cite{hassett:rational_cubic},
\cite{hassett_varilly:K3}, \cite{geeman:K3}, prefer to use geometric
manifestations of Brauer classes.  One of these geometric
manifestations involves the Stein factorization of the relative
lagrangian grassmannian.  Over a field (more generally, when the
quadric fibration is regular), it's a classical fact that in
dimensions 2 and 4, the Brauer classes arising from these two methods
coincide, see \cite[XVI~Ex.~85.4]{elman_karpenko_merkurjev}.  This is
proved by appealing to the exceptional isomorphisms of projective
homogeneous varieties associated to algebraic groups of low rank, as
in \cite[\S15]{book_of_involutions}. The aim of this section is
provide a proof for quadric fibrations with simple degeneration.  For
quadric surface bundles, this fact has been noted in
\cite[Lemma~4.2]{kuznetsov:cubic_fourfolds} and
\cite[Thm.~4.5]{ingalls-khalid}.

Let $(E,q,L)$ be a quadratic form of rank $n$ on a scheme $S$.  We
recall the standard moduli theoretic description of the
\linedef{isotropic grassmannian fibration} $\LG_r(q) \to S$.

\begin{theorem}
\label{thm:grass_moduli}
For each integer $0 \leq r \leq \lceil n/2 \rceil$, the 
$S$-scheme $\LG(q)$ represents the 
the moduli subfunctor
$$
u : U \to S \quad \mapsto \quad \left\{ W \mapto{v} u^* E \; : \;
\text{$W$ has rank $r+1$ and $u^*q|_W = 0$} \right\} 
$$
of the grassmannian of rank $r+1$ sub vector bundles of $E$.
\end{theorem}

Note that $\LG_0(q)=Q$ recovers Theorem~\ref{thm:moduli_quadric}.
When $n$ is even, the $\LG(q) = \LG_{n/2}(q)$ is called the
\linedef{lagrangian grassmannian fibration} associated to $(E,q,L)$.

We recall that the \linedef{Stein factorization} of a proper morphism
$p : X \to S$ of schemes is a decomposition $X \mapto{r} Z \mapto{f}
S$ such that $r\pushforward \OO_X \isom \OO_Z$ and $f$ is affine, and
which satisfies the following universal property:\ for any other
factorization $X \to Z' \mapto{f'} S$ with $f'$ affine, there exists a
unique morphism $\alpha : Z \to Z'$ such that $f' \circ \alpha = f$.
The Stein factorization may be constructed by taking $Z = \SSpec
p\pushforward\OO_X$.  Note that the following functoriality property
holds:\ given a proper morphism $p' : X' \to S$ and an $S$-morphism $j
: X' \to X$, there exists a commutative diagram
$$
\xymatrix@C=40pt@R=18pt{
X' \ar[d]_j \ar[r]^{r'} & Z' \ar[d]_g \ar[r]^{f'} & S \ar@{=}[d] \\
X  \ar[r]^{r} & Z \ar[r]^f & S 
}
$$
of Stein factorizations.

Let $f : T \to S$ be the discriminant cover, and $Q \to S$ be the
quadric fibration, associated to $(E,q,L)$.  Let  
\begin{equation}
\label{eq:stein}
\LG(q) \mapto{r'} T' \mapto{f'} S
\end{equation} 
be the Stein factorization of the morphism $\LG(q) \to S$.  In
\cite[\S3.2]{hassett_varilly:K3}, it's proved that if $Q \to S$ has
simple degeneration, then $r'$ is smooth and $f'$ is finite flat of
degree 2.  By the classical theory, if $n=4$ or $n=6$, then $r'$ is an
\'etale locally trivial projective space bundle of relative dimensions
1 and 2, respectively.  

Our aim is to show that in these cases, the projective space bundle
$r'$ and the Azumaya algebra $\CliffB_0$ (the even Clifford algebra
considered over the discriminant cover, see
\S\ref{subsec:clifford_discriminant}) define the same Brauer class.
We must first verify that $f' : T' \to S$ is indeed isomorphic to the
discriminant cover, a result that seems to be often used in the
literature, though no complete proof could be readily found.

\begin{proposition}
\label{prop:isocovers}
Let $S$ be a regular scheme and $(E,q,L)$ a quadratic form of even
rank $n$ on $S$. Assume that $(E,q,L)$ has simple degeneration along a
regular divisor $D$ of $S$.  Then there is a canonical $S$-isomorphism
$T' \to T$.
\end{proposition}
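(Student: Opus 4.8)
The plan is to exhibit both $T$ and $T'$ as the normalization of $S$ in a single quadratic \'etale algebra over the function field, whence they become canonically isomorphic over $S$. First I would reduce to the case that $S$ is connected, hence --- being regular --- integral, with generic point $\eta$ and function field $K=k(S)$; the general statement follows by treating the connected components of $S$ separately. Since $S$ is regular and $D$ is a regular divisor, both $T\to S$ and $T'\to S$ are finite flat of degree $2$ with $T$ and $T'$ regular: for $T$ because $\CliffZ=\CliffZ(E,q,L)$ is locally free of rank $2$ on the locally factorial integral scheme $S$ and $T\to S$ is branched exactly along the regular divisor $D$ (use Proposition~\ref{prop:properties}\eqref{C_0-center} for \'etaleness away from $D$); for $T'$ it is the structure result of \cite[\S3.2]{hassett_varilly:K3}, which in the simple degeneration case presents the Stein factorization $\LG(Q)\to T'\to S$ as the composite of a smooth morphism with a finite flat double cover branched along $D$.

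Next I would compare the generic fibres. Over $\eta$ the form $q_\eta$ is a regular quadratic form of even rank, and the classical theory over a field provides a canonical isomorphism of quadratic \'etale $K$-algebras
$$
\CliffZ(E_\eta,q_\eta,L_\eta)\isom K',
$$
where $K'$ denotes the coordinate ring of the Stein factorization of the lagrangian grassmannian $\LG(Q_\eta)\to\Spec K$ --- equivalently, the \'etale algebra cut out by the two families of maximal isotropic subspaces of $q_\eta$; see \cite[\S13,\S85]{elman_karpenko_merkurjev}. Using the functoriality of the Stein factorization, this gives canonical $K$-isomorphisms $T_\eta\isom\Spec K'$ and $T'_\eta\isom\Spec K'$ compatible with the structure maps to $\Spec K$.

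Finally I would globalize by normality. On an affine open $\Spec A\subseteq S$, let $B$ and $B'$ be the coordinate rings of $T$ and $T'$. These are finite flat $A$-algebras of rank $2$; since $T$ and $T'$ are regular, hence normal, each is integrally closed in its total ring of fractions, and by the previous step those total rings of fractions are canonically identified with the same $K$-algebra $K'$. Any $A$-order in $K'$ that is integrally closed in $K'$ coincides with the integral closure of $A$ in $K'$; hence $B$ and $B'$ both equal that integral closure inside $K'$, and these identifications glue to the required canonical $S$-isomorphism $T'\to T$.

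The hard part is the second step: extracting from the classical theory a genuinely \emph{canonical} isomorphism between the centre of the even Clifford algebra and the Stein factorization of the lagrangian grassmannian over $\Spec K$, functorial enough to match the two $S$-structures. An alternative to the normalization argument in the last step, closer to the methods of this paper, would be to construct a canonical $S$-morphism $\LG(Q)\to T$ directly --- via the action of $\CliffZ$ on the half-spinor modules $\exterior^{+}_{L}W$ and $\exterior^{-}_{L}W$ attached to a maximal isotropic subbundle $W\subset E$ --- and then to invoke the universal property of the Stein factorization (which applies since $T\to S$ is affine) to obtain a morphism $\alpha:T'\to T$ over $S$; one then checks that $\alpha$ is an isomorphism over $U=S\smallsetminus D$ by the field case, and extends this over $D$ using that $T$ and $T'$ are regular and that $T\to S$ is finite.
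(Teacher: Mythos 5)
Your overall strategy --- identify both covers with the normalization of $S$ in a single quadratic extension of $k(S)$ --- is genuinely different from the paper's, and the part you flag as hard (the canonical identification at the generic point) is in fact fine: the construction you relegate to an ``alternative'' at the end, namely the morphism of moduli functors $\LG(Q) \to T$ obtained from the action of $\CliffZ$ on $\det W$ for a lagrangian subbundle $W$, followed by the universal property of the Stein factorization, is precisely how the paper produces the canonical map $T' \to T$. The genuine gap lies elsewhere: your argument is load-bearing on the regularity (normality) of $T'$, and this is not available. The structure result you cite from Hassett--V\'arilly-Alvarado, as used in the paper, says only that $\LG(Q) \to T' \to S$ factors as a smooth morphism followed by a finite flat degree-$2$ morphism; to deduce that $T'$ is regular from this you would need the total space $\LG(Q)$ to be regular, which is proved in that reference only for quadric surface bundles over a smooth base over a field, not for an arbitrary regular scheme $S$ and arbitrary even rank. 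In effect, regularity of $T'$ is a \emph{consequence} of the proposition (once $T' \isom T$, one uses that a double cover of a regular scheme branched along a regular divisor is regular), and the paper's proof is structured precisely to avoid assuming it: the paper never asserts regularity of $T'$, only of $T$. The same objection applies to the last sentence of your alternative route, where you ``extend over $D$ using that $T$ and $T'$ are regular.''

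What the paper does instead, after constructing the canonical map, is show that the corresponding algebra map $\CliffZ \to \CliffZ'$ is an isomorphism by Zariski-localizing and inducting on the rank: the rank-$2$ case is an explicit computation with $q = \langle 1,-\pi\rangle$; the inductive step uses quadric reduction along an isotropic line to produce a closed embedding $\LG(q') \to \LG(q)$, Kempf vanishing to control $R^1$ of the Stein contraction, and a direct computation showing that over a degenerate point the affine part of the Stein factorization is $\Spec k[\varepsilon]/(\varepsilon^2)$; anisotropic forms are handled by passing to an \'etale quadratic extension. If you want to salvage your approach without that induction, note that once the canonical generically-isomorphic injection $\CliffZ \hookrightarrow \CliffZ'$ of rank-$2$ orders is in hand, a conductor--discriminant comparison suffices: $\disc(\CliffZ)$ cuts out the \emph{reduced} divisor $D$ (this is the simple degeneration hypothesis plus regularity of $D$), and it differs from $\disc(\CliffZ')$ by twice the conductor divisor, which must therefore vanish. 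Either way, some substitute for the unproven normality of $T'$ is required.
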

\begin{proof}
We first construct the morphism $T' \to T$, following
\cite[\S85]{elman_karpenko_merkurjev}.  For each $u : U \to S$ and
each lagrangian submodule $v : W \to u\pullback E$, there is a
canonical $\OO_U$-module morphism $\psi : \det W \tensor L^{\vee
\tensor n/4} \to \CliffAlg_0(E,q,L)$ (if $n \equiv 0 \bmod 4$) or
$\psi : \det W \tensor L^{\vee \tensor (n-2)/4} \to
\CliffAlg_1(E,q,L)$ (if $n \equiv 2 \bmod 4$).  But the action of the
center $u\pullback\CliffZ$ stabilizes the image of $\psi$, hence there
is an $\OO_U$-algebra morphism $u\pullback \CliffZ \to
\EEnd_{\OO_U}(\det W) \isom \OO_U$, hence an $\OO_S$-algebra morphism
$\CliffZ \to u\pushforward \OO_U$, hence a morphism $U \to T$.  By the
functoriality of the even Clifford algebra and Clifford bimodule (cf.\
Propositions~\ref{prop:properties}(\ref{C_0-functoriality}) and
\ref{prop:properties_C_1}(\ref{C_1-functoriality})), we've just
defined a morphism of moduli functors $\LG(q) \to T$ over $S$.  Since
$T \to S$ is affine, the universal property of Stein factorization
provides an $S$-morphism $t : T' \to T$.  Note that under our
hypotheses, both $T$ and $T'$ are finite flat of degree 2 over $S$.

Since $T$ and $T'$ are affine over $S$, to argue that $t : T' \to T$
is an isomorphism we may assume that $S$ is the spectrum of a local
ring $(R,\maxideal)$.  Note that $t$ is equivariant for the Galois
action since any reflection of $(E,q,L)$ induces the nontrivial
$S$-isomorphism of both $T$ and $T'$.

First note that if $(E,q,L)$ is regular over $S$, then both $T$ and
$T'$ are \'etale of degree 2 and hence, as both correspond to torsors
for the group $\Z/2\Z$, any equivariant morphism between them is an
isomorphism.  Thus we may assume that $(E,q,L)$ has simple
degeneration along a regular divisor $D$ of $S$ generated by a
nonsquare nonzero divisor $\pi \in \maxideal$ and (modifying up to
units) that $\pi$ is the discriminant of $q$.

Now we proceed by induction on the rank.  If $(E,q,L)$ has rank 2,
then (multiplying by units, which does not change the quadric) it can
be diagonalized as $q = <\! 1, -\pi\!>$.  Now $\LG(q) = \Proj
R[x,y]/(x^2-\pi y^2)$ and note that the standard affine patch $U_y$
(where $y$ is invertible) is canonically isomorphic to $\Spec
R[x]/(x^2-\pi)$ which is precisely $T$.  Finally, note that $U_x
\subset U_y$ since $\pi$ is a nonzero divisor, so that $\LG(q) \isom
T$.

Now assume that the proposition holds in rank $n \geq 2$.  Let
$(E,q,L)$ have rank $n+2$.  We first reduce to the case that $q$ is
isotropic.  In general, there exists an finite \'etale extension $S'
\to S$ over which $q$ becomes isotropic since $q$ has simple
degeneration and rank $> 2$, it has a (semi)regular subform of rank
$>1$, which acquires a zero over a finite \'etale extension.  Then if
$t : T' \to T$ becomes an isomorphism over $S'$, it was an
isomorphism.

Now assume that $(E,q,L)$ is isotropic.  Then since $D$ and $S$ are
regular, the associated quadric fibration is regular, and any
isotropic line $N \subset E$ is regular (see
Lemma~\ref{regsect}). Hence we have a decomposition $(E,q,L) = H_L(N)
\perp (E',q',L)$ corresponding to quadric reduction (see
\S\ref{subsec:hyperbolic_algebraic}), with $q'$ having the same
degeneration as $q$ by Corollary~\ref{cor:degeneration_reduction}.
There is a closed embedding $j : \LG(q') \to \LG(q)$ defined by $W
\mapsto l\inv(W)$ where $l : N^{\perp} \to E'$ is the quotient map.
Thus there is a commutative diagram
$$
\xymatrix@C=40pt@R=18pt{
\LG(q') \ar[d]^j\ar[r]^(.55){r''}& T'' \ar[d]_g \ar[r]^{f''} & S \ar@{=}[d]\\
\LG(q)  \ar[r]^(.55){r'} & T' \ar[r]^{f'} & S 
}
$$
of Stein factorizations.  

We claim that $g : T'' \to T'$ is an isomorphism.  Pushing forward
the ideal sheaf $\ki$ of the embedding $j$, we arrive at an exact
sequence
$$
0 \to r\pushforward' \ki \to \OO_{T'} \to
g\pushforward \OO_{T''} \to R^1 r\pushforward' \ki \to 0
$$
which is exact at right since $R^1 r\pushforward' \OO_{\LG(q)}$
vanishes (equivalently, $f\pushforward' R^1r\pushforward' \OO_{\LG(q)}
\isom R^1 p\pushforward' \OO_{\LG(q)}$ vanishes, since $f'$ is
affine).  Indeed, the fibers of $r'$ are projective homogeneous
varieties whose structure sheaves have no higher cohomology by Kempf's
vanishing theorem \cite{kempf:vanishing} (cf.\
\cite[Proof~of~Prop.~3.3]{hassett_varilly:K3}).  As the generic fiber
of $R^ir\pushforward' \ki$ is trivial for $i=0,1$ (again, since the
generic fiber of $T'' \to T'$ is an equivariant morphism of \'etale
quadratic covers, it is an isomorphism), these are torsion sheaves.
In particular, $r\pushforward' \ki = 0$ since it is torsion-free.  By
\'etale localization, we are reduced to the following:\ if $q$ is a
quadratic form with simple degeneration (i.e., radical of rank 1) over
a separably closed field $k$, then the affine part of the Stein
factorization of $\LG(q)$ is isomorphic to $\Spec
k[\varepsilon]/(\varepsilon^2)$.  This follows by a geometric
argument:\ considering a flat family with smooth generic fiber and
special fiber $q$, the two connected components of the lagrangian
grassmannian over the generic fiber come together in the special
fiber.  Then, since any injective algebra endomorphism between rings
of dual numbers is an isomorphism, the special fiber of $\OO_{T'} \to
g\pushforward\OO_{T''}$ is an isomorphism. Thus $R^1 r\pushforward'\ki
= 0$ and we have proved the claim.

By Proposition~\ref{prop:split_hyperbolic_case}, $\CliffAlg_0(E,q,L)$
and $\CliffAlg_0(E',q',L)$ have isomorphic centers, hence the
discriminant cover of $q'$ is isomorphic to $T \to S$.  But now by the
induction hypothesis, the induced morphism $T'' \to T$ is an
isomorphism, hence by the above, $T' \to T$ is an isomorphism.
\end{proof}

\begin{proposition}\label{brauerclasses}
Let $S$ be a regular integral scheme.  Let $Q\to S$ be a quadric
fibration of relative dimension 2 or 4 with simple degeneration along
a regular divisor and discriminant double cover $T \to S$.  Then
there is an equality classes in $\Br(T) \isom \Br(T')$:
\begin{itemize}
\item The class $\lambda$ of the Severi--Brauer scheme appearing as
the connected part of the Stein factorization $\LG(Q) \to T' \to S$.

\item The class $\beta$ arising from the even Clifford algebra $\CliffB_0$
on the discriminant cover $T\to S$.
\end{itemize}
\end{proposition}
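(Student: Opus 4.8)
The plan is to reduce the asserted equality to the classical fact over a field, exploiting the regularity of the discriminant cover. First I would make sure that both classes live in the same group and that $\lambda$ is defined at all. By Proposition~\ref{prop:isocovers}, the degree-$2$ cover $f' : T' \to S$ underlying the Stein factorization is canonically $S$-isomorphic to the discriminant cover $f : T \to S$, so $\Br(T')\isom\Br(T)$ and I may regard $\lambda$ as an element of $\Br(T)$. The existence of $\lambda$ itself uses the simple degeneration hypothesis: by the structure result recalled from \cite[\S3.2]{hassett_varilly:K3}, the connected part $r' : \LG(Q)\to T'$ of the Stein factorization is a smooth proper morphism all of whose geometric fibers are $\PP^1$ (when $Q\to S$ has relative dimension $2$) or $\PP^2$ (relative dimension $4$); such a morphism is a Severi--Brauer scheme and hence defines a class $\lambda\in\Br(T')$. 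On the other side, $\kb_0$ is an Azumaya $\OO_T$-algebra by Proposition~\ref{prop:simple_azumaya}, with Brauer class $\beta\in\Br(T)$.

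Next I would localize to the generic point. Since $S$ is regular and $(E,q,L)$ has simple degeneration along the regular divisor $D$, the cover $f : T\to S$ is finite flat and branched along $D$, whence $T$ is regular and integral (exactly as argued in the proof of Theorem~\ref{thm:hyperbolic_splitting_morita}); therefore the restriction map $\Br(T)\to\Br(k(T))$ is injective \cite[Cor.~1.10]{grothendieck:Brauer_II}. It thus suffices to prove $\lambda=\beta$ after restriction to $\Spec k(T)$, which lies over the generic point $\Spec k(S)$ of $S$.

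Finally I would invoke the classical statement over the field $k(S)$. Restricted to $\Spec k(S)$ the form $(E,q,L)$ is a regular quadratic form of even dimension (after trivializing $L$), its even Clifford algebra commutes with this base change by Proposition~\ref{prop:properties}~\eqref{C_0-functoriality}, and $k(T)$ is the center of $\kc_{0,k(S)}$; hence $\beta_{k(T)}$ is the Brauer class of the central simple $k(T)$-algebra $\kc_{0,k(S)}$. Likewise the formation of $\LG(Q)$ and of its Stein factorization commutes with the flat base change $\Spec k(S)\to S$, so $\lambda_{k(T)}$ is the class of the connected component of the variety of maximal totally isotropic subspaces of $q_{k(S)}$, a Severi--Brauer $k(T)$-variety. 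The equality of these two classes in $\Br(k(T))$ is \cite[XVI~Ex.~85.4]{elman_karpenko_merkurjev} (see also \cite[Lemma~4.2]{kuznetsov:cubic_fourfolds} for relative dimension $2$ and \cite[Thm.~4.5]{ingalls-khalid}), and the proof is complete.

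The step I expect to be the main obstacle is the well-definedness of $\lambda$ carried out in the first paragraph: knowing that $r'$ is genuinely a Severi--Brauer scheme, rather than merely a flat morphism with projective-space fibers, is precisely what forces us to retain the simple degeneration hypothesis and to appeal to \cite{hassett_varilly:K3}. By contrast, the compatibility of the identification $T'\isom T$ of Proposition~\ref{prop:isocovers} with the two incarnations of the Brauer class needs no separate verification, since reducing to the generic point measures both of them against the same classical object.
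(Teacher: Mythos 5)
Your proof follows the paper's argument exactly: identify $T'\isom T$ via Proposition~\ref{prop:isocovers}, use regularity of $T$ to invoke injectivity of $\Br(T)\to\Br(k(T))$, and reduce to the classical equality over the function field from \cite[XVI~Ex.~85.4]{elman_karpenko_merkurjev}. The only difference is that you spell out more explicitly the well-definedness of $\lambda$ and the base-change compatibilities, which the paper leaves implicit.
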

\begin{proof}
Since $S$ and $D$ are regular, $T$ will be regular. Since $S$ is
integral and $D$ is nonempty, $T$ will be regular.  Denote by $L$ the
function field of $T$.  Thus $\Br(T) \to \Br(L)$ is injective (see
\cite{auslander_goldman:brauer_group_commutative_ring} or
\cite[II~Cor.~1.10]{grothendieck:Brauer}).  Identifying $\Br(T') \isom
\Br(T)$ by Proposition \ref{prop:isocovers}, we need only compare the
two Brauer classes $\lambda$ and $\beta$ at the generic point, where
this statement is classical (see
\cite[Exer.~85.4]{elman_karpenko_merkurjev} for example).
\end{proof}

It would be interesting to give a direct explicit isomorphism between
the Severi--Brauer scheme of $\CliffB_0$ and the connected part of the
Stein factorization of $\LG(Q)$.  See \cite[Thm.~4.5]{ingalls-khalid}
for an approach in the case of surface bundles.  This would seem most
naturally accomplished via the moduli space interpretations of the two
objects.


\begin{thebibliography}{10}

\bibitem{SGA3-3}
\emph{Sch\'emas en groupes. {III}: {S}tructure des sch\'emas en groupes
  r\'eductifs}, S\'eminaire de G\'eom\'etrie Alg\'ebrique du Bois Marie 1962/64
  (SGA 3). Dirig\'e par M. Demazure et A. Grothendieck. Lecture Notes in
  Mathematics, Vol. 153, Springer-Verlag, Berlin, 1962/1964.

\bibitem{alekseev:dP4}
V.A.\ Alekseev, \emph{On conditions for the rationality of three-folds with a
  pencil of del {P}ezzo surfaces of degree {$4$}}, Mat. Zametki \textbf{41}
  (1987), no.~5, 724--730, 766.

\bibitem{amer:theorem}
M.\ Amer, \emph{Quadratische Formen uber Funktionenkorpern}, unpublished dissertation, Johannes Gutenberg Universitat, Mainz (1976).

\bibitem{an-au-ga-za}
A.Ananyevskiy, A. Auel, S.Garibaldi, and K.Zainoulline
\emph{Exceptional colletcions of ilne bundles on projective
homogeneous varieties}, Adv.\ Math.\ \textbf{236} (2013), 111--130. 


\bibitem{auel:clifford}
A.\ Auel, \emph{Clifford invariants of line bundle-valued quadratic
forms}, MPIM preprint series \textbf{33}, 2011.

\bibitem{auel:Brdim}
\bysame, \emph{Surjectivity of the total Clifford invariant and Brauer
dimension}, preprint arXiv:1108.5728.
 
\bibitem{auel_parimala_suresh}
A.\ Auel, R.\ Parimala, V.\ Suresh,
\emph{Quadric surface bundles over surfaces}, preprint arXiv:1207.4105.

\bibitem{auslander_goldman:brauer_group_commutative_ring}
M.\ Auslander and O.\ Goldman,
\emph{The {B}rauer group of a commutative ring},
Trans.\ Amer.\ Math.\ Soc.\ \textbf{97} (1960), 367--409.

\bibitem{baeza:semilocal_rings}
R.\ Baeza, \emph{Quadratic forms over semilocal rings}, Lecture Notes in
  Mathematics, Vol. 655, Springer-Verlag, Berlin, 1978.

\bibitem{balmer_calmes:lax}
P.\ Balmer and B.\ Calm{\`{e}}s, \emph{Bases of total {W}itt groups and
  lax-similitude}, J.\ Algebra Appl.\ \textbf{11}, 1250045 (2012), no.\ 3, 24 pp.

\bibitem{bayer_fainsilber}
E.\ Bayer-Fluckiger and L.\ Fainsilber, 
\emph{Non-unimodular Hermitian forms},
Invent.\ Math.\ \textbf{123} (1996), no.\ 2, 233--240. 


\bibitem{beauvillejaco}
A. Beauville,
{\em Vari\'et\'es de Prym et jacobiennes interm\'ediaires},
Ann. Scient. ENS {\bf 10} (1977), 309--391.\bibitem{berna_macri_mehro_stella}

M.\ Bernardara, E.\ Macr{\`\i}, S.\ Mehrotra, and P.\ Stellari,
\emph{A categorical invariant for cubic threefolds}, 
Adv.\ Math.\ \textbf{229} (2012), no.\ 2, 770--803.

\bibitem{bolognesi_bernardara:conic_bundles}
M.\ Bernardara and M.\ Bolognesi, \emph{Derived categories and
  rationality of conic bundles}, preprint arXiv:1010.2417v3, 2010,
  Compositio Math., to appear.

\bibitem{bolognesi_bernardara:representability}
\bysame, \emph{Categorical representability
 and intermediate Jacobians of Fano threefolds}, in \emph{Derived categories in algebraic geometry},
EMS Series of Congress Reports, 1--25 (2012).

\bibitem{bertin:trisecants}
M.-A.\ Bertin, \emph{On the singularities of the trisecant surface to
a space curve}, 
Le Matematiche (Catania) \textbf{53} (1999), 15--22.   

\bibitem{bichsel:thesis}
W.\ Bichsel, \emph{Quadratische R\"aume mit Werten in invertierbaren Moduln},
  Ph.D. thesis, ETH Z{\"{u}}rich, 1985.

\bibitem{bichsel_knus:values_line_bundles}
W.\ Bichsel and M.-A.\ Knus, \emph{Quadratic forms with values in line bundles},
  Contemp.\ Math. \textbf{155} (1994), 293--306.

\bibitem{bhargava:ICM}
M.\ Bhargava, \emph{Higher composition laws and applications},
International {C}ongress of {M}athematicians.\ {V}ol.\ {II},
Eur.\ Math.\ Soc., Z\"urich, 2006, 271--194.

\bibitem{bondal:representations}
A.I.\ Bondal, \emph{Representations of associative algebras and coherent
  sheaves}, Izv.\ Akad.\ Nauk SSSR Ser.\ Mat.\ \textbf{53} (1989),
  no.\ 1, 25--44.

\bibitem{bondal_kapranov:reconstructions}
A.I.\ Bondal and M.M.\ Kapranov, \emph{Representable functors, {S}erre
  functors, and reconstructions}, Izv.\ Akad.\ Nauk SSSR Ser.\ Mat.\ \textbf{53}
  (1989), no.\ 6, 1183--1205, 1337.

\bibitem{bondal_orlov:semiorthogonal}
A.I.\ Bondal and D.O.\ Orlov, \emph{Semiorthogonal decomposition for
  algebraic varieties}, MPIM preprint 1995-15, 1995.

\bibitem{bondal_orlov:ICM2002}
\bysame,
\emph{Derived categories of coherent sheaves},
Proceedings of the International Congress of Mathematicians, Vol. II
(Beijing, 2002), 47--56, Higher Ed.\ Press, Beijing, 2002. 

\bibitem{bondal_vdB}
A.I.\ Bondal, and M.\ van den Bergh,
\emph{Generators and representability of functors in commutative and noncommutative geometry},
Mosc.\ Math.\ J.\ \textbf{3} (2003), no.\ 1, 1--36, 258.
 
\bibitem{bridgeland:triangulated}
T.\ Bridgeland, \emph{Equivalences of triangulated categories and
  {F}ourier-{M}ukai transforms}, Bull.\ London Math.\ Soc.\ \textbf{31} (1999),
  no.\ 1, 25--34.


\bibitem{cadman}
C.\ Cadman,
\emph{Using stacks to impose tangency conditions on curves},
Amer.\ J.\ Math.\ \textbf{129} (2007), no.\ 2, 405--427.

\bibitem{caldararu:thesis}
A.\ C{\u{a}}ld{\u{a}}raru, \emph{Derived categories of twisted sheaves on
  {C}alabi-{Y}au manifolds}, Ph.D. thesis, Cornell University, Ithica, NY, May
  2000.

\bibitem{campana_peternell_pukhlikov}
F.\ Campana, T.\ Peternell, and A.V.\ Pukhlikov,
\emph{The generalized {T}sen theorem and rationally connected {F}ano
              fibrations}, Mat.\ Sb.\ \textbf{193} (2002), no.\ 10, 49--74.

\bibitem{caenepeel_van_oystaeyen}
S.\ Caenepeel and F.\ van Oystaeyen, \emph{Quadratic forms with values
  in invertible modules}, {K}-{T}heory \textbf{7} (1993), 23--40.

\bibitem{canonaco-stellari}
A.\ Canonaco and P.\ Stellari,
\emph{Twisted Fourier-Mukai functors}, {A}dv.\ {M}ath.\ 212 (2007), 484--503.

\bibitem{colliot-thelene_levine}
J.-L.\ Colliot-Th{\'e}l{\`e}ne and M.\ Levine, \emph{Une version du
th{\'e}or{\`e}me d'Amer et Brumer pour les z{\'e}ro-cycles},
Quadratic forms, linear algebraic groups, and cohomology, Dev.\ Math.,
vol.\ 18, Springer, New York, 2010, pp.\ 215--223.

\bibitem{colliot-thelene_sansuc_swinnerton-dyer:quadrics_I}
J.-L.\ Colliot-Th{\'e}l{\`e}ne, J.-J.\ Sansuc, and P.\
  Swinnerton-Dyer, \emph{Intersections of two quadrics and {C}h{\^a}telet
  surfaces. {I}}, J.\ Reine Angew.\ Math.\ \textbf{373} (1987), 37--107.


\bibitem{delign_mumford:irreducibility}
P.\ Deligne and D.\ Mumford,
\emph{The irreducibility of the space of curves of given genus},
Inst.\ Hautes \'Etudes Sci.\ Publ.\ Math.\ \textbf{36} (1969), 75--109. 

\bibitem{demazure_gabriel}
M.\ Demazure and P.\ Gabriel, \emph{Groupes alg{\'{e}}briques. {T}ome
  {I}: G{\'{e}}om{\'{e}}trie alg{\'{e}}brique, g{\'{e}}n{\'{e}}ralit{\'{e}}s,
  groupes commutatifs. {A}vec un appendice, \emph{{C}orps de classes local},
  par {M}ichiel {H}azewinkel}, Masson {\&} Cie, {\'{E}}diteur, Paris,
  North--Holland Publishing Company, Amsterdam, 1970.

\bibitem{desale-ramanan}
U.V.\ Desale and S.\ Ramanan,
{\em Classification of vector bundles of rank 2 on hyperelliptic curves},
Invent.\ Math.\ {\bf 38} (1976), 161--185.

\bibitem{delone_faddeev}
B.N.\ Delone and D.K.\ Faddeev,
\emph{The theory of irrationalities of the third degree},
Translations of Mathematical Monographs \textbf{10}, American Mathematical Society, Providence, R.I, 1964. 

\bibitem{elman_karpenko_merkurjev}
R.\ Elman, N.\ Karpenko, and A.\ Merkurjev, \emph{The algebraic
  and geometric theory of quadratic forms}, American Mathematical Society
  Colloquium Publications, vol.\ 56, American Mathematical Society, Providence,
  RI, 2008.


\bibitem{fulton:flag_bundles}
\bysame, \emph{Schubert varieties in flag bundles for the classical
  groups}, Proceedings of Conference in Honor of {H}irzebruch's 65th Birthday,
  {B}ar {I}lan, 1993, vol.\ 9, Israel Mathematical Conference Proceedings, 1995.

\bibitem{geeman:K3}
B.\ van Geemen, \emph{Some remarks on Brauer groups of K3 surfaces},
Adv.\ in Math., \textbf{197} (2005), 222-247. 

\bibitem{giraud}
J.\ Giraud, \emph{{C}ohomologie non ab{\'e}lienne},
{S}pringer-{V}erlag, Berlin, 1971.

\bibitem{graber_harris_starr}
T.\ Graber, J.\ Harris, and J.\ Starr, \emph{Families of rationally
  connected varieties}, J.\ Amer.\ Math.\ Soc.\ \textbf{16} (2003),
  no.\ 1, 57--67.

\bibitem{gross_lucianovic}
B.\ Gross and M.\ Lucianovic,
\emph{On cubic rings and quaternion rings},
J.\ Number Theory \textbf{129} (2009), no.\ 6, 1468--1478.

\bibitem{EGA4}
A.\ Grothendieck, \emph{\'{E}l\'ements de g\'eom\'etrie alg\'ebrique. {IV}.
  \'{E}tude locale des sch\'emas et des morphismes de sch\'emas. {I}, {II},
  {III}, {IV}}, Inst. Hautes \'Etudes Sci. Publ. Math. (1964, 1965, 1966,
  1967), no.~20, 24, 28, 32.


\bibitem{grothendieck:Brauer}
A.\ Grothendieck, \emph{Le groupe de {B}rauer. {I}, {II}, {III}. {T}h\'eorie
  cohomologique}, Dix {E}xpos\'es sur la {C}ohomologie des {S}ch\'emas,
  North--Holland, Amsterdam, 1968, pp.\ 67--87.


\bibitem{hartshorne:algebraic_geometry}
R.\ Hartshorne, \emph{{A}lgebraic geometry}, {G}raduate {T}exts {M}ath.,
  vol.~52, {S}pringer-{V}erlag, {N}ew {Y}ork, 1977.

\bibitem{hassett:rational_cubic}
B.\ Hassett, \emph{Some rational cubic fourfolds}, J.\ Algebraic
Geometry {\bf 8} (1999), no.\ 1, 103--114. 

\bibitem{hassett_varilly:K3}
B.\ Hassett, P.\ Varilly, and A.\ V{\'a}rilly-Alvarado,
  \emph{Transcendental obstructions to weak approximation on general {$K3$}
  surfaces}, Adv.\ in Math., \textbf{228} (2011), 1377-1404.

\bibitem{huybrechts-book}
D. Huybrechts,
{\em Fourier-{M}ukai transforms in {A}lgebraic {G}eometry}.
Oxford Math. Monongraphs (2006).

\bibitem{huybrechts-lehn}
D.\ Huybrechts and M.\ Lehn,
{\em The geometry of moduli spaces of sheaves},
Aspects of Mathematics, E31, Braunschweig 1997.

\bibitem{huybrechts-stellari}
D.\ Huybrechts and P.\ Stellari, 
\emph{Equivalences of twisted K3 surfaces}, 
Math.\ Ann.\ \textbf{332} (2005), no.\ 4, 901--936.

\bibitem{ingalls-khalid}
C.\ Ingalls and M.\ Khalid,
{\em Derived equivalences of Azumaya algebras on K3 surfaces}, 
preprint arXiv:1104.4333, 2011.

\bibitem{iskovconicduke}
V.A. Iskovskikh,
{\em On the rationality problem for conic bundles},
Duke Math. J. {\bf 54} (1987), 271--294.

\bibitem{isko_fano}
V.A.\ Iskovskikh and Y.\ Prokhorov,\emph{Algebraic geometry. V. 
Fano varieties}, A translation of Algebraic geometry. 5 (Russian), Ross.\ Akad.\ Nauk, Vseross.\ Inst.\ Nauchn.\ i Tekhn.\ Inform., Moscow. Translation edited by A.N.\ Parshin and I.R.\ Shafarevich. Encyclopaedia of Mathematical Sciences, \textbf{47}. Springer-Verlag, Berlin, 1999. 

\bibitem{kapranov:quadric}
M.M.\ Kapranov, \emph{The derived category of coherent sheaves on a
quadric}, Funkcionalniy analiz i ego pril., \textbf{20} (1986), 67.

\bibitem{kapranov:intersection_quadrics}
\bysame, \emph{On the derived category and $K$-functor of coherent sheaves on intersections of quadrics}, Math.\ USSR Izv.\ \textbf{32} (1989), 191--204. 

\bibitem{kapranov:derived}
\bysame, \emph{On the derived categories of coherent sheaves on
some homogeneous spaces}, Invent.\ Math., \textbf{92} (1988), 479--508.

\bibitem{kko}
A.\ Kapustin, A.\ Kuznetsov, and D.\ Orlov, \emph{Noncommutative instantons and twistor transform}, Comm. Math. Phys. \textbf{221}
(2001), no. 2, 385--432.

\bibitem{kashiwara_schapira:categories_sheaves}
M.\ Kashiwara and P.\ Schapira, \emph{Categories and sheaves},
  Grundlehren Math.\ Wiss., vol.\ 332, Springer-Verlag, Berlin, 2006.

\bibitem{kempf:vanishing}
G.R.\ Kempf,
\emph{Linear systems on homogeneous spaces},
Ann.\ of Math.\ (2) \textbf{103}, no.\ 3, 557--591.


\bibitem{knus:quadratic_hermitian_forms}
M.-A.\ Knus, \emph{Quadratic and hermitian forms over rings},
  {S}pringer-{V}erlag, {B}erlin, 1991.

\bibitem{book_of_involutions}
M.-A.\ Knus, A.\ Merkurjev, M.\ Rost, and J.-P.\ Tignol,
  \emph{The book of involutions}, Colloquium Publications, vol.\ 44, AMS, 1998.

\bibitem{knus_ojanguren:metabolic}
M.-A.\ Knus and M.\ Ojanguren, \emph{The {C}lifford algebra of a
  metabolic space}, Arch.\ Math.\ (Basel) \textbf{56} (1991), no.\ 5, 440--445.

\bibitem{knus_parimala_sridharan:rank_4}
M.-A.\ Knus, R.\ Parimala, and R.\ Sridharan,
\emph{On rank 4 quadratic spaces with given {A}rf and {W}itt invariants},
Math.\ Ann.\ \textbf{274} (1986), no.\ 2, 181--198. 



\bibitem{kuznetsov:v14}
A.\ Kuznetsov,
{\em Derived categories of cubic and $V_{14}$ threefolds},
Proc.\ Steklov Inst.\ Math.\ {\bf 246}, 171--194 (2004); translation from Tr.\ Mat.\ Inst.\ Steklova {\bf 246}, 183--207 (2004). 

\bibitem{kuznetsov:v12}
\bysame,
{\em Derived categories of {F}ano threefolds $V_{12}$},
Math.\ Notes {\bf 78} (2005), no.\ 4, 537--550; translation from Mat.\ Zametki {\bf 78} (2005), no.\ 4, 579--594.

\bibitem{kuznetsov:hyp-sections}
\bysame,
{\em Hyperplane sections and derived categories},
Izv.\ Math.\ {\bf 70} (2006), no. 3, 447--547; translation from
Izv.\ Ross.\ Akad.\ Nauk, Ser.\ Mat.\ {\bf 70} (2006), no.\ 3, 23--128.

\bibitem{kuznetsov:hpd}
\bysame, \emph{Homological projective duality}, Publ.\ Math.\ Inst.\
  Hautes {\'E}tudes Sci.\ (2007), no.\ 105, 157--220.

\bibitem{kuznetsov:quadrics}
\bysame, \emph{Derived categories of quadric fibrations and intersections of
  quadrics}, Adv.\ Math.\ \textbf{218} (2008), no.\ 5, 1340--1369.

\bibitem{kuznetsov:cubic_fourfolds}
\bysame,
{\em Derived categories of cubic fourfolds},
in {\em Cohomological and geometric approaches to rationality problems},
Progr.\ Math.\ {\bf 282}, Birkh{\"a}user Boston, 163--208, 2010.

\bibitem{kuznetbasechange} \bysame, \emph{Base change for semiorthogonal decompositions}.
Compositio Math. {\bf 147} (2011), no.~3,
852--876.

\bibitem{lam:algebraic_theory_quadratic_forms}
T.Y.\ Lam, \emph{The algebraic theory of quadratic forms}, Benjamin/Cummings
  Publishing Co.\ Inc.\ Advanced Book Program, Reading, Mass., revised
  second printing, Mathematics Lecture Note Series, 1980.

\bibitem{champs-algebriques}
G.\ Laumon and L.\ Moret-Bailly, \emph{Champs algébriques}, Ergebnisse der Mathematik und ihrer Grenzgebiete.\ 3.\ Folge.\ Springer-Verlag, Berlin, 2000. 

\bibitem{leep:amer_brumer_arbitrary}
D.\ Leep, \emph{The Amer-Brumer Theorem over arbitrary fields},
preprint, 2007.

\bibitem{lieblich:thesis}
M.\ Lieblich, \emph{Moduli of twisted sheaves and generalized {A}zumaya
  algebras}, Ph.D.\ thesis, Massachusetts Institute of Technology, Cambridge,
  MA, June 2004.

\bibitem{lieblich:moduli_twisted_sheaves}
M.\ Lieblich, 
\emph{Moduli of twisted sheaves}, 
Duke Math.\ J.\ \textbf{138} (2007), no.\ 1, 23--118. 

\bibitem{lieblich:period-index}
\bysame,
\emph{Twisted sheaves and the period-index problem},
Compos.\ Math.\ \textbf{144} (2008), 1--31.


\bibitem{matsumura:commutative_ring_theory} 
H.~Matsumura,
\emph{Commutative ring theory}, 
Cambridge University Press, Cambridge,
1986, Translated from the Japanese by M.\ Reid.

\bibitem{nitsure:Quot_Hilbert}
N.\ Nitsure, 
\emph{Construction of {H}ilbert and {Q}uot schemes}, 
{Fundamental algebraic geometry},
{Math. Surveys Monogr.} \textbf{123}, 105--137, 
{Amer. Math. Soc.}, {Providence, RI}, 2005.6

\bibitem{ottaviani:spinor_bundles}
G.\ Ottaviani, \emph{Spinor bundles on quadrics}, Trans.\ Amer.\ Math.\ Soc.\
  \textbf{307} (1988), no.\ 1, 301--316.Cambridge
  Studies in Advanced Mathematics, vol.~8,

\bibitem{orlov:blowup}
D.O.\ Orlov, \emph{Projective bundles, monoidal transformations, and derived categories of coherent sheaves},
Russian Acad.\ Sci.\ Izv.\ Math. \textbf{41}, (1993), no.\ 1, 133--141 

\bibitem{orlovequivabel}
\bysame, {\em Derived categories of coherent sheaves on abelian varieties and equivalences between them}
(Russian) Izv. Ross. Akad. Nauk Ser. Mat. {\bf 66} (2002), no. 3, 131--158; translation in Izv. Math. {\bf 66} (2002), no. 3, 569--594.


\bibitem{parimala_sridharan:norms_and_pfaffians}
R.\ Parimala and R.\ Sridharan, \emph{Reduced norms and pfaffians via
  {B}rauer-{S}everi schemes}, Contemp.\ Math.\ \textbf{155} (1994), 351--363.

\bibitem{pfister:amer}
A.\ Pfister, \emph{Quadratic forms with applications to algebraic geometry and
topology}, Cambridge University Press, (1995).

\bibitem{poli-posi}
A.\ Polishchuk and L.\ Positselski, \emph{Quadratic algebras}, University Lecture Series, \textbf{37}. American Mathematical Society, Providence, RI, 2005. xii+159 pp.

\bibitem{reid:thesis}
M.\ Reid, \emph{The intersection of two quadrics}, Ph.D.\
  thesis, Trinity College, Cambridge, June 1972.

\bibitem{sarkisov-bira}
V.G.\ Sarkisov,
{\em Birational automorphisms of conic bundles},
Math.\ USSR, Izv.\ {\bf 17} (1981), no.\ 1, 177--202.

\bibitem{scharlau}
W.\ Scharlau,
\emph{Quadratic and Hermitian forms},
Grundlehren Math.\ Wiss., vol.\ 270. Springer-Verlag, Berlin, 1985.

\bibitem{shokuprym}
V.V. Shokurov,
{\em Prym varieties: theory and applications},
Math. USSR-Izv. {\bf 23} (1984), 83--147.

\bibitem{shramov:dp4}
K.A.\ Shramov, \emph{On the rationality of non-singular threefolds with a pencil of Del Pezzo
    surfaces of degree 4}, Sb.\ Math., \textbf{197} (2006), no.\ 1, 127--137.


\bibitem{swan:quadric_hypersurfaces}
R.G.\ Swan, \emph{{$K$}-theory of quadric hypersurfaces}, Ann.\ of Math.\
  (2) \textbf{122} (1985), no.\ 1, 113--153.

\bibitem{balaji_ternary}
T.E.\ Venkata Balaji,
\emph{Line-bundle valued ternary quadratic forms over schemes},
J.\ Pure Appl.\ Algebra \textbf{208} (2007), 237--259.

\bibitem{voight:quaternion_rings}
J.\ Voight,
\emph{Characterizing quaternion rings over an arbitrary base},
J.\ reine angew.\ Math.\ \textbf{657} (2011), 113--134.

\bibitem{wood:binary}
M.M.\ Wood,
\emph{Gauss composition over an arbitrary base},
Adv.\ Math.\ \textbf{226} (2011), no.\ 2, 1756--1771.

\end{thebibliography}
\end{document}